\documentclass[11pt,final]{amsart}
\usepackage[utf8]{inputenc}
\usepackage[T1]{fontenc}
\usepackage[letterpaper,centering]{geometry}
\usepackage{lmodern}
\usepackage{eucal}
\usepackage{mathrsfs}
\usepackage{enumerate}
\usepackage{amsmath,amssymb,amsfonts}
\usepackage{xcolor}
\definecolor{darkblue}{rgb}{0,0,0.3}
\definecolor{darkgreen}{rgb}{0,0.4,0}
\usepackage[colorlinks=true,linkcolor=darkblue, citecolor=darkgreen, unicode,pdfborder={0 0 0}]{hyperref}
\usepackage[ps,arrow,matrix,tips,line]{xy}
{\setbox0\hbox{$ $}}\fontdimen16\textfont2=\fontdimen17\textfont2
\entrymodifiers={+!!<0pt,\the\fontdimen22\textfont2>}
\SelectTips{cm}{11}

\usepackage{ifdraft}
\ifdraft{
\usepackage[notcite,notref]{showkeys}
\addtolength{\hoffset}{1.5cm}

}{}

\theoremstyle{plain}
\newtheorem{thm}[equation]{Theorem}
\newtheorem{conj}[equation]{Conjecture}
\newtheorem{problem}[equation]{Problem}

\newtheorem{lem}[equation]{Lemma}
\newtheorem{cor}[equation]{Corollary}
\newtheorem{prop}[equation]{Proposition}
\theoremstyle{definition}

\newtheorem{defn}[equation]{Definition}
\newtheorem{rmk}[equation]{Remark}
\newtheorem{rmks}[equation]{Remarks}
\newtheorem{example}[equation]{Example}

\numberwithin{equation}{subsection}

\let\oldsubsubsection\subsubsection
\renewcommand{\subsubsection}[1]{%
\setcounter{subsubsection}{\value{equation}}%
\oldsubsubsection{#1}%
\refstepcounter{equation}%
}

\input cyracc.def
\DeclareFontFamily{U}{russian}{}
\DeclareFontShape{U}{russian}{m}{n}
        { <5><6> wncyr5
        <7><8><9> wncyr7
        <10><10.95><12><14.4><17.28><20.74><24.88> wncyr10 }{}
\DeclareSymbolFont{Russian}{U}{russian}{m}{n}
\DeclareSymbolFontAlphabet{\mathcyr}{Russian}
\makeatletter
\let\@math@cyr\mathcyr
\renewcommand{\mathcyr}[1]{\@math@cyr{\cyracc #1}}
\makeatother
\newcommand{\geom}{{\mathrm{geom}}}
\renewcommand{\ni}{{\mathrm{ni}}}

\newcommand{\isoto}{\myxrightarrow{\,\sim\,}}
\makeatletter
\def\myrightarrow{{\setbox\z@\hbox{$\rightarrow$}\dimen0\ht\z@\multiply\dimen0 6\divide\dimen0 10\ht\z@\dimen0\box\z@}}
\def\myrightarrowfill@{\arrowfill@\relbar\relbar\myrightarrow}
\newcommand{\myxrightarrow}[2][]{\ext@arrow 0359\myrightarrowfill@{#1}{#2}}
\makeatother
\newcommand{\mtilde}{{\mathchoice
    {\widetilde{m}}
    {\widetilde{m}}
    {\rlap{$\scriptscriptstyle{m}$}\vphantom{\raise0pt\hbox{$m$}}\smash{\lower2.5pt\hbox{$\scriptscriptstyle\widetilde{\phantom{\scriptscriptstyle{m}}}$}}}
    {\rlap{$\scriptscriptstyle{m}$}\vphantom{\raise.2pt\hbox{$m$}}\smash{\lower2.05pt\hbox{$\scriptscriptstyle\widetilde{\phantom{\scriptscriptstyle{m}}}$}}}}}

\newcommand{\Mtilde}{{\mathchoice
    {\rlap{$M$}\mkern1mu\smash[b]{\lower.5pt\hbox{$\widetilde{\phantom{M}}$}}\mkern-1mu}
    {\rlap{$M$}\mkern1mu\smash[b]{\lower.5pt\hbox{$\widetilde{\phantom{M}}$}}\mkern-1mu}
    {\rlap{$\scriptstyle{M}$}\mkern1mu\smash[b]{\lower.5pt\hbox{$\widetilde{\phantom{\scriptstyle{M}}}$}}\mkern-1mu}
    {\widetilde{M}}}}

\newcommand{\et}{{\mathrm{\acute et}}}
\newcommand{\sC}{{\mathscr C}}

\newcommand{\sH}{{\mathscr H}}

\newcommand{\sU}{{\mathscr U}}
\newcommand{\sV}{{\mathscr V}}

\newcommand{\A}{{\mathbf A}}

\newcommand{\F}{{\mathbf F}}
\newcommand{\Fp}{{\mathbf F}_{\mkern-2mup}}

\renewcommand{\P}{{\mathbf P}}
\newcommand{\Q}{{\mathbf Q}}
\newcommand{\R}{{\mathbf R}}
\newcommand{\C}{{\mathbf C}}

\newcommand{\Z}{{\mathbf Z}}

\newcommand{\Qp}{{\Q_{p}}}

\newcommand{\nr}{\mathrm{nr}}

\newcommand{\Gm}{\mathbf{G}_\mathrm{m}}
\newcommand{\Gmbark}{\mathbf{G}_{\mathrm{m},\bar k}}

\newcommand{\Gal}{\mathrm{Gal}}
\newcommand{\SL}{\mathrm{SL}}
\newcommand{\GL}{\mathrm{GL}}
\newcommand{\Cl}{\mathrm{Cl}}
\newcommand{\PSL}{\mathrm{PSL}}
\newcommand{\Pic}{\mathrm{Pic}}
\newcommand{\Br}{\mathrm{Br}}
\newcommand{\Spec}{\mathrm{Spec}}

\renewcommand{\phi}{\varphi}
\renewcommand{\emptyset}{\varnothing}

\newcommand{\Ker}{{\mathrm{Ker}}}

\newcommand{\Hom}{{\mathrm{Hom}}}

\newcommand{\mmu}{\boldsymbol{\mu}}

\newcommand{\Zhat}{{\hat\Z}}
\newcommand{\chapeau}{{\rlap{\smash{\hbox{\lower4pt\hbox{\hskip1pt$\widehat{\phantom{u}}$}}}}}}
\newcommand{\Picplushat}{\Pic_+^{{\smash{\hbox{\lower4pt\hbox{\hskip0.4pt$\widehat{\phantom{u}}$}}}}}}
\newcommand{\PicplusAhat}{\Pic_{+,\A}^{{\smash{\hbox{\lower4pt\hbox{\hskip.4pt$\widehat{\phantom{u}}$}}}}}}

\newcommand{\Pichat}{\Pic^{{\smash{\hbox{\lower4pt\hbox{\hskip0.4pt$\widehat{\phantom{u}}$}}}}}}

\DeclareMathOperator{\inv}{inv}

\hyphenation{semi-stable}
\hyphenpenalty=500
\pretolerance=515

 \makeatletter
 \renewcommand{\tocsection}[3]{%
   \indentlabel{\@ifnotempty{#2}{\bfseries\ignorespaces#1 #2\quad}}\bfseries#3}

 \renewcommand{\tocsubsection}[3]{%
   \indentlabel{\@ifnotempty{#2}{\hspace{1.6em}\ignorespaces#1 #2\quad}}#3}
 \makeatother

\makeatletter\let\@wraptoccontribs\wraptoccontribs\makeatother

\date{February 14th, 2023; revised on October 3rd, 2023.}
\title[Around the inverse Galois problem]{Park City lecture notes:\\around the inverse Galois problem}

\author{Olivier Wittenberg}
\address{Institut Galil\'ee, Universit\'e Sorbonne Paris Nord, 99~avenue Jean-Baptiste Cl\'ement, 93430 Villetaneuse, France}
\email{wittenberg@math.univ-paris13.fr}

\begin{document}

\begin{abstract}
The inverse Galois problem asks whether any finite group can be realised as
the Galois group of a Galois extension of the rationals. This problem and
its refinements have stimulated a large amount of research in number theory
and algebraic geometry in the past century, ranging from Noether's problem
(letting~$X$ denote the quotient of the affine space by a finite group acting
linearly, when is~$X$ rational?)\ to the rigidity method (if~$X$ is not
rational, does it at least contain interesting rational curves?)\ and to
the arithmetic of unirational varieties (if all else fails, does~$X$ at least
contain interesting rational points?). The goal of the present notes, which formed the basis for three lectures
given at the Park City Mathematics Institute in August~2022,
is to provide an introduction to these topics.
\end{abstract}

\maketitle

The inverse Galois problem is a
simple-looking but fundamental open question of number theory on which
tools coming from diverse areas of mathematics can be brought to bear.
These lectures aim to explain the problem as well as a few of the many methods
that have been developed to attack it,
emphasising a geometric point of view whenever possible.

The first lecture introduces the problem together with a refinement of it first considered by Grunwald,
and presents the strategy of Hilbert and Noether---a strategy
based on Hilbert's irreducibility theorem
and laid out
more than a hundred years ago.
This leads us to
 the notion of versal torsor,
and to
 questions of rationality, stable rationality, retract rationality
for quotient varieties.
The second lecture is devoted to the regular inverse Galois problem, which is about the construction
of Galois covers of curves.  We present the rigidity method in detail,
via Hurwitz moduli spaces.
The third and final lecture explains how Grunwald's problem is expected to be
controlled by the Brauer--Manin obstruction to weak approximation on the quotient varieties appearing in the
Hilbert--Noether strategy, and discusses the descent method and its applications to the inverse Galois problem
and to its variants.

Several important topics could not fit into these three lectures
and had to be left aside, such as the connection between the inverse Galois problem
and the construction of Galois representations (see \cite{shih}, \cite{zywina} for some examples),
the directions in which the rigidity method has been developed beyond its base case
(see e.g.\ \cite{dettweilerreiter}, \cite{volkleinbc}, \cite[Chapter~III]{mallematzat} among many others),
or the study of embedding problems.

Additional material on the inverse Galois problem can be found
in \cite{serretopics,mallematzat,volklein,debessurvey,jensenledetyui,matzatkonstruktive,szamuelygaloisgroups}.

\bigskip
\emph{Acknowledgements.}
These lectures were delivered in August~2022 in Park City on the occasion of the PCMI~2022 graduate summer school,
whose organisers
are gratefully acknowledged.
The author thanks Diego Izquierdo for leading the problem sessions,
Enric Florit
and Boris Kunyavski\u{\i}
for correcting  oversights in the first version of the notes,
as well as Jean-Louis Colliot-Thélène, Joachim König, Jean-Pierre Serre, Tamás Szamuely and the referee for numerous useful comments that helped improve the exposition.

\section{From Galois to Hilbert and Noether}
\label{sec:1}

\subsection{Introduction}

Galois theory turns the collection of all number fields into a profinite
group, the absolute Galois group $\Gal(\bar \Q/\Q)$ of~$\Q$.
The study of this group and of
its representations has been a cornerstone of number theory for more than a century.
Yet, even such a basic question as the following one remains wide-open to this day:
do all finite groups appear as quotients of $\Gal(\bar \Q/\Q)$?
This is the so-called ``inverse Galois problem''.

The same question can be asked about the absolute Galois group of an
arbitrary field~$k$: given a finite group~$G$ and a field~$k$,
does there exist a Galois field extension~$K$ of~$k$ such that $\Gal(K/k)\simeq G$?
Obviously, the answer is in the negative for some fields~$k$ that have a small absolute Galois group
(e.g.\ the fields $\C$ and $\R$, trivially; or the field $\Q_p$, as its absolute Galois group is prosolvable).
When~$k$ is a number field, a positive answer is known when~$G$
is solvable (Shafarevich, see \cite[Chapter~IX, \textsection6]{neukirchschmidtwingberg}
and the references therein), when~$G$ is a symmetric or an alternating group (Hilbert~\cite{hilbertorig}),
when~$G$ is a sporadic group other than~$M_{23}$
(Shih,
Fried,
Belyi,
Matzat,
Thompson,
Hoyden--Siedersleben,
Zeh--Marschke,
Hunt,
Pahlings, see \cite{mallematzat}),
when~$G$ belongs to various infinite families of non-abelian simple groups of Lie type (e.g.\ the groups $\PSL_2(\Fp)$, according to Shih, Malle, Clark, Zywina; see \cite{zywina});
but the problem remains open over~$\Q$ even for such a small group as
 $\SL_2(\F_{13})$,
of order~$2184$,
or as the simple group
 $\PSL_3(\F_{8})$
(for the latter, see~\cite{zywinasmall}, to be complemented with \cite{dieulefaitfloritvila}).

Several variants or generalisations of the inverse Galois problem have been considered in the literature.
Here is one of them.  Given a number field~$k$, we denote the set of its places by~$\Omega$
and the completion of~$k$ at $v \in \Omega$ by~$k_v$.

\begin{problem}[Grunwald]
\label{pb:grunwald}
Let~$k$ be a number field
and $S\subset \Omega$ be a finite subset.
Let~$G$ be a finite group.
For each $v \in S$, let~$K_v$ be a Galois field extension of~$k_v$
such that the group $\Gal(K_v/k_v)$ can be embedded into~$G$.
Does there exist a Galois field extension~$K$ of~$k$ such that $\Gal(K/k)\simeq G$
and such that for all $v \in S$, the completion of~$K/k$ at any place of~$K$ lying above~$v$
is isomorphic to $K_v/k_v$?
\end{problem}

The Grunwald--Wang theorem,
which was proved by Wang~\cite{wanggrunwald} following the work of Grunwald~\cite{grunwald}
and which has an interesting history (see
\cite[Chapter~X, footnote on p.~73]{artintate} and
\cite[Chapter~VIII, §2, p.~234, Notes]{milneCFT}),
 gives a complete answer
to Problem~\ref{pb:grunwald}
 when~$G$ is abelian,
via class field theory.
In particular, the answer to Grunwald's problem is negative  for $G=\Z/8\Z$ and $k=\Q$
(see Proposition~\ref{prop:wang} below),
but it is positive, for any number field~$k$ and any finite abelian group~$G$,
 as soon as~$S$ does not contain
any place dividing~$2$.
For an arbitrary finite group~$G$,  the Grunwald problem
is expected to have a positive answer whenever~$G$ does not contain any place dividing
the order of~$G$.  This is the ``tame'' Grunwald problem,
a terminology coined in \cite{dlan}.

Other variants include embedding problems (given a Galois field extension~$\ell/k$,
a finite group~$G$
and
a surjection $\phi:G \twoheadrightarrow \Gal(\ell/k)$, can one embed~$\ell/k$ into a Galois
field extension~$K/k$ such that $G\simeq \Gal(K/k)$, the map~$\phi$
being identified with the restriction map
$\Gal(K/k) \twoheadrightarrow \Gal(\ell/k)$?)\ or
the question of resolving the inverse Galois problem with
additional constraints, such as the constraint
that a given finite collection of elements of~$k$ be norms from~$K$
(a problem raised in \cite{freiloughrannewton}).

\subsection{Torsors and Galois extensions}
\label{subsec:torsorsandgalois}

Let us start by reformulating the inverse Galois problem in terms of torsors.
Hereafter, a \emph{variety} over a field~$k$ is a separated scheme of finite type over~$k$
(which may be disconnected or otherwise reducible) and~$\bar k$ denotes an algebraic closure of~$k$.

\begin{defn}
\label{def:torsor}
Let $\pi:Y\to X$ be a finite morphism between varieties over a field~$k$.
Let~$G$ be a finite group acting on~$Y$, in such a way that~$\pi$ is $G$\nobreakdash-equivariant
(for the trivial action of~$G$
on~$X$).
We say that~$\pi$ is a \emph{$G$\nobreakdash-torsor},
or that~$Y$ is a $G$\nobreakdash-torsor over~$X$,
if~$\pi$ is étale and~$G$
acts simply transitively on the fibres of the map $Y(\bar k)\to X(\bar k)$ induced by~$\pi$.
\end{defn}

When~$G$ is a finite group acting on a variety~$Y$, we denote by $Y/G$ the quotient variety,
characterised by the universal property of quotients,
when it exists.
Let us recall that the quotient~$Y/G$ exists if~$Y$ is quasi-projective;
the projection
$\pi:Y \to Y/G$ is then finite and surjective; it is étale if the action of~$G$
on~$Y$ is free (by which we mean
that~$G$
acts freely on the set~$Y(\bar k)$); and in the affine case, if $Y=\Spec(A)$, then $Y/G=\Spec(A^G)$ (see
\cite[Chapter~II, §7 and Chapter~III, §12]{mumford}).

It is easy to see that a finite $G$\nobreakdash-equivariant morphism $\pi:Y\to X$
is a $G$\nobreakdash-torsor if and only if~$G$ acts freely on~$Y$
and~$\pi$ induces an isomorphism
$Y/G \isoto X$.
Thus, in particular, a Galois field extension~$K/k$ with Galois group~$G$
is the same thing
as an irreducible $G$\nobreakdash-torsor over~$k$ (that is, over~$\Spec(k)$);
and this, in turn, is the same thing as an irreducible variety of dimension~$0$, over~$k$,
endowed with a simply transitive
action of~$G$.

This rewording leads to a slight change in perspective, first emphasised by
Hilbert and Noether:
in order to solve the inverse Galois problem for~$G$,
we can now start with any irreducible quasi-projective variety~$Y$ endowed with a free action of~$G$;
setting $X=Y/G$,  we  obtain a
 $G$\nobreakdash-torsor $\pi:Y\to X$; it is then enough
to look for rational points $x \in X(k)$ such that the fibre~$\pi^{-1}(x)$
is irreducible.  Indeed, this fibre is in any case a $G$\nobreakdash-torsor over~$k$.

\begin{rmk}
\label{rmk:pushtorsor}
Given a subgroup $H \subseteq G$, any $H$\nobreakdash-torsor $Y\to X$
gives rise to a
 $G$\nobreakdash-torsor
$Y'\to X$.
Namely, if~$H$ acts on the left on~$Y$,
we let it act on the right on
$G \times Y$ by $(g,y) \cdot h=(gh,h^{-1}y)$
and observe that $Y'=(G \times Y)/H$ inherits a free left action of~$G$,
turning the projection $Y' \to X$
into a $G$\nobreakdash-torsor.
The variety~$Y'$ is
(canonically) a disjoint union indexed by~$G/H$ of varieties
each of which is (non-canonically, in general) isomorphic, over~$X$, to~$Y$.
Conversely, if $Y' \to X$ is a $G$\nobreakdash-torsor and~$X$ is connected,
then any connected component~$Y$ of~$Y'$ is an $H$\nobreakdash-torsor over~$X$
for some subgroup~$H$ (namely, the stabiliser of~$Y$), and $Y'$ coincides with $(G \times Y)/H$.
All in all, when~$X$ is connected, the data of a $G$\nobreakdash-torsor $Y'\to X$ together with the choice
of a connected component of~$Y'$ is equivalent to the data of a subgroup $H \subseteq G$ and of a
connected $H$\nobreakdash-torsor $Y\to X$.
In particular, we see that if~$G$ is a finite group and~$k$ is a field,
the data of a $G$\nobreakdash-torsor over~$k$ together with the choice of a connected component
is equivalent to the data of a Galois field extension~$K/k$ endowed with an embedding $\Gal(K/k) \hookrightarrow G$.
\end{rmk}

\subsection{Hilbert's irreducibility theorem}
When the base of the $G$\nobreakdash-torsor $\pi:Y\to X$
is an open subset of~$\P^1_k$, with~$k$ a number field, and its total space~$Y$ is irreducible, the existence of
rational points $x \in X(k)$
such that the fibre $\pi^{-1}(x)$
is irreducible
is guaranteed by Hilbert's irreducibility theorem, which we state next.

\begin{thm}[Hilbert]
\label{thm:irredhilbertP1}
Let~$k$ be a number field.
Let $X \subseteq \P^1_k$ be a dense open subset.  Let $\pi: Y \to X$ be an irreducible étale covering
(i.e.\ a finite étale morphism from an irreducible variety).
There exists $x \in X(k)$ such that $\pi^{-1}(x)$ is irreducible.
\end{thm}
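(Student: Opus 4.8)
The plan is to translate the geometric statement into the irreducibility of a one-parameter family of polynomials and then to show, by a height count, that the specialisations for which irreducibility fails are too few to exhaust $X(k)$. If $X=\P^1_k$ then $\pi$ is a trivial covering (as $\P^1_k$ admits no nontrivial connected finite étale covering in characteristic~$0$) and every fibre is a point, so I may assume $X\subsetneq\P^1_k$ is affine. Since $\pi$ is finite étale and $Y$ is irreducible, $k(Y)$ is a finite separable extension of $k(X)=k(t)$ generated by a root of a polynomial $f(t,T)\in k(t)[T]$ of degree $n\geq 1$, irreducible over $k(t)$. Replacing $X$ by a smaller dense open subset—harmless, since a point of the smaller set is a point of~$X$ and the restriction of $\pi$ stays irreducible étale—I may assume that $f$ has coefficients in $\sO_X(X)$, is monic in~$T$, has invertible discriminant on~$X$, and that $\pi$ is the projection $\Spec(\sO_X(X)[T]/(f))\to X$. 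For $x\in X(k)$ with coordinate $t_0$, the fibre $\pi^{-1}(x)=\Spec(k[T]/(f(t_0,T)))$ is then the spectrum of an étale $k$-algebra, irreducible precisely when $f(t_0,T)$ is irreducible over~$k$; so it suffices to find $t_0\in X(k)$ with $f(t_0,T)$ irreducible. (One could equally replace $Y$ by the Galois closure of $k(Y)/k(t)$, with group~$G$, and seek $x$ over which the monodromy $\Gal(\kbar/k)\to G$ is surjective, since surjectivity forces transitivity on every $G$-set, hence irreducibility of $\pi^{-1}(x)$; this changes nothing essential.)

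Next I would set up the count. Writing $H$ for the height on $\P^1_k$, the number of $t_0\in X(k)$ with $H(t_0)\leq B$ grows like $c\,B^{2[k:\Q]}$ for some $c>0$ (Schanuel). It therefore suffices to prove that the exceptional set
\[
E(B)=\{\, t_0\in X(k) : H(t_0)\leq B,\ f(t_0,T)\ \text{reducible over}\ k \,\}
\]
satisfies $\#E(B)=o(B^{2[k:\Q]})$. A factorisation $f(t_0,T)=g(T)h(T)$ into monic factors over~$k$ with $\deg g=m$, $1\leq m<n$, means $g=\prod_{i\in S}(T-\alpha_i)$ for some proper nonempty subset~$S$ of the roots $\alpha_1,\dots,\alpha_n$ of $f(t_0,\cdot)$, so that every elementary symmetric function of $\{\alpha_i\}_{i\in S}$ lies in~$k$. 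As there are only finitely many possibilities for the size and type of~$S$, it is enough to bound, for each proper nonempty subset-type, the number of $t_0$ realising it.

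The technical heart—and the main obstacle—is this last bound, which I would obtain via Puiseux expansions at $t=\infty$. Over a field $k((t^{-1/e}))$ the polynomial $f$ splits completely, its roots being Puiseux series $\phi_1,\dots,\phi_n$ whose values at a large $t_0$ approximate and label the $\alpha_i$. For a proper nonempty subset~$S$ of branches, $\prod_{i\in S}(T-\phi_i)$ cannot lie in $k(t)[T]$, for otherwise it would be a proper factor of the irreducible~$f$; hence some elementary symmetric function $\sigma_S$ of $\{\phi_i\}_{i\in S}$ is algebraic over $k(t)$ of degree $d_S\geq 2$. Let $C_S$ be the curve cut out by its minimal polynomial $p_S(t,s)\in k(t)[s]$. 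Reducibility of $f(t_0,T)$ forces, for at least one such~$S$, that $\sigma_S(t_0)\in k$, i.e.\ that $t_0$ lies in the image of the first projection $C_S(k)\to\A^1(k)$. Since this projection has degree $d_S\geq 2$, the functoriality of heights shows that any $t_0$ with $H(t_0)\leq B$ in the image arises from a point of $C_S$ of height $\ll B^{1/d_S}$, and the number of such points is $O(B^{(2/d_S)[k:\Q]})=O(B^{[k:\Q]})$. Summing the finitely many contributions gives $\#E(B)=O(B^{[k:\Q]})=o(B^{2[k:\Q]})$, which produces the desired $t_0$. The crux is thus the single quantitative point that a symmetric function of the branches which is not a rational function of~$t$ is realised over~$k$ only on a sparse set of specialisations; everything else is height bookkeeping and the comparison of orders of growth.
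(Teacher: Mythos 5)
The first thing to say is that the paper does not prove Theorem~\ref{thm:irredhilbertP1} at all: it is quoted with a reference to Serre's Mordell--Weil lectures, so the only possible comparison is with the standard proofs in that source. Your overall strategy --- reduce to the irreducibility of the specialisations $f(t_0,T)$ of an irreducible monic $f\in\sO_X(X)[T]$, show that the exceptional $t_0$ lie in finitely many sets of the form $\mathrm{pr}_1(C_S(k))$ with $\mathrm{pr}_1\colon C_S\to\A^1$ of degree $d_S\geq 2$ (i.e.\ that the exceptional set is \emph{thin}), and then count --- is exactly the classical one, and your first two paragraphs are sound. (A small repair: the Puiseux series and the sentence about branches ``approximating and labelling'' the $\alpha_i$ at large $t_0$ are an unnecessary analytic detour. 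The inclusion $E(B)\subseteq\bigcup\mathrm{pr}_1(C_{S,j}(k))$ is purely algebraic: let $Z\to X$ be the normalisation of $X$ in the splitting field $L$ of $f$ over $k(t)$; after shrinking $X$ the roots $\phi_i$ and the symmetric functions $\sigma_{S,j}=e_j(\{\phi_i\}_{i\in S})$ are regular on $Z$, a choice of $z\in Z(\bar k)$ over $t_0$ identifies the roots of $f(t_0,T)$ with the $\phi_i(z)$, and the coefficients of a monic factor $g\in k[T]$ are the values $\sigma_{S,j}(z)\in k$, which satisfy $p_{S,j}(t_0,\sigma_{S,j}(z))=0$.)

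The genuine gap is the final count, which is where the entire content of the theorem lives and which you dispatch as ``height bookkeeping''. The assertion that a point of $C_S(k)$ over $t_0$ with $H(t_0)\leq B$ has height $\ll B^{1/d_S}$ is unjustified: on the plane model $p_S(t,s)=0$, quasi-equivalence of heights on the smooth projective model only gives $H(s_0)\ll B^{e/d_S+\epsilon}$ with $e=\deg_t p_S$, and $e/d_S$ can exceed $1$ (for $s^2=t^3+1$ one gets $e/d_S=3/2$ and the resulting count $O(B^{3[k:\Q]})$ is worse than trivial). Moreover the bound ``$O(B^{(2/d_S)[k:\Q]})$ points of bounded height on $C_S$'' is a Schanuel-type statement that is only elementary when $C_S$ has genus $0$; for genus $1$ it requires the Mordell--Weil theorem together with N\'eron's estimate, and for genus $\geq 2$ Mumford's gap principle or Faltings. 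The standard ways to close this gap avoid counting on $C_S$ altogether: either the large sieve (a thin subset of $\P^1(k)$ misses a positive proportion of residues modulo almost all primes, by Lang--Weil and Chebotarev, hence has $o(B^{2[k:\Q]})$ points of height $\leq B$), or the classical D\"orge--Siegel argument on the spacing of the real values of the branches $\sigma_S$. Any of these would complete your proof, but none is a formal consequence of functoriality of heights; as written, the key estimate $\#E(B)=o(B^{2[k:\Q]})$ is not established.
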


Theorem~\ref{thm:irredhilbertP1}
is classically
formulated in the following equivalent way: given an irreducible two-variable polynomial $f(s,t)$
with coefficients in a number field~$k$, there exist infinitely many
$t_0 \in k$ such that $f(s,t_0)$ is an irreducible one-variable polynomial with coefficients in~$k$.
In fact, the set of such~$t_0$
is not just infinite:
asymptotically, it contains $100\%$ of the elements of~$k$, when they are ordered by height
(see \cite[\textsection13.1, Theorem~3]{serremw}).

A proof of Theorem~\ref{thm:irredhilbertP1}
can be found in \cite[\textsection9.2, \textsection9.6]{serremw}, where the next corollary
is also established.

\begin{cor}
\label{cor:irredhilbertPn}
Same statement,
with~$X$ now a dense open subset of~$\P^n_k$ for some $n\geq 1$.
\end{cor}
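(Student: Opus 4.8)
The plan is to reduce to Theorem~\ref{thm:irredhilbertP1} by intersecting~$X$ with a well-chosen $k$-rational line. Concretely, it suffices to produce a line $L \subseteq \P^n_k$, defined over~$k$ and isomorphic to $\P^1_k$, such that $X \cap L$ is a dense open subset of~$L$ and the restriction $\pi^{-1}(X \cap L) \to X \cap L$ is again an irreducible étale covering: applying Theorem~\ref{thm:irredhilbertP1} to this restriction then yields a point $x \in (X \cap L)(k) \subseteq X(k)$ with $\pi^{-1}(x)$ irreducible. To find such an~$L$, I would let it vary in the Grassmannian $\mathrm{Gr}$ of lines in~$\P^n_k$ and argue that the good lines form a dense open subset.

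First I would form the tautological family. Over the open subscheme of~$\mathrm{Gr}$ parametrising lines meeting~$X$, the incidence variety supplies a morphism $\mu\colon U \to X$ whose fibre over a point of~$X$ is (an open subset of) the space of lines through that point, hence geometrically irreducible of the expected dimension. Thus~$\mu$ is dominant with geometrically irreducible generic fibre, so the function field extension $\kappa(X) \subseteq \kappa(U)$ is regular; in particular $\kappa(X)$ is algebraically closed in~$\kappa(U)$. Since~$Y$ is irreducible and~$\pi$ is étale, $\kappa(Y)$ is a finite separable extension of~$\kappa(X)$, and the minimal polynomial of a generator cannot factor over~$\kappa(U)$ (its factors would have coefficients algebraic over~$\kappa(X)$, hence in~$\kappa(X)$). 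Therefore $\kappa(Y) \otimes_{\kappa(X)} \kappa(U)$ is a field and $Z := Y \times_X U$ is irreducible. The fibre of~$Z$ over the generic point of~$\mathrm{Gr}$ is exactly the pullback of~$\pi$ to the generic line, so that covering is irreducible over $\kappa(\mathrm{Gr})$.

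It then remains to spread this out to an honest $k$-rational line. Shrinking~$\mathrm{Gr}$ to a dense open, I may assume that over every line~$L$ in the parameter space $X \cap L$ is dense open in~$L$ and~$\pi$ restricts to a finite étale cover. The delicate point is to preserve irreducibility \emph{over the base field}, not merely geometrically: writing~$E$ for the algebraic closure of~$k$ in~$\kappa(Y)$—which is also the constant field of~$Z$—the geometric components of~$Z$ are defined over~$E$ and are permuted transitively by $\Gal(E/k)$, this transitivity being precisely the irreducibility of~$Z$ over~$k$. Applying the constructibility of geometric irreducibility in families after base change to~$E$, component by component, and descending the resulting dense open locus from~$\mathrm{Gr}_E$ to~$\mathrm{Gr}$ by $\Gal(E/k)$-equivariance, I obtain a dense open $V \subseteq \mathrm{Gr}$ over every point of which the $\Gal(E/k)$-action on geometric components is still transitive; equivalently, the restricted cover is irreducible over the base field. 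As~$k$ is infinite, $V(k)$ is non-empty, and any $L \in V(k)$ is a line of the required type. The main obstacle is exactly this last transfer: the generic line works for the clean reason above, but irreducibility over~$k$—unlike geometric irreducibility—is not a priori an open condition on rational lines, and it is only because the constant field~$E$ of~$Z$ is fixed once and for all that the sole mechanism forcing irreducibility, namely transitivity of $\Gal(E/k)$, propagates to a dense open family of fibres. Isolating~$E$ and verifying that its Galois action specialises correctly is the step that requires care; everything else is formal.
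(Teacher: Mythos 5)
The paper offers no proof of its own here---it simply defers to the reference \cite[\S 9.2, \S 9.6]{serremw}---so your proposal must stand on its own, and it has a genuine gap at the spreading-out step. Your incidence-variety paragraph correctly shows that $Z=Y\times_X U$ is irreducible, hence that the fibre of $Z\to\mathrm{Gr}$ over the \emph{generic} point of the Grassmannian is irreducible over the function field $\kappa(\mathrm{Gr})$. But the mechanism you then invoke---constructibility of geometric irreducibility in families, applied to each geometric component $W_j$ of~$Z$---produces a \emph{dense} open locus in $\mathrm{Gr}$ only if the generic fibre of $W_j\to\mathrm{Gr}_{\bar k}$ is \emph{geometrically} irreducible, i.e.\ irreducible over an algebraic closure of $\kappa(\mathrm{Gr}_{\bar k})$; if it is not, the constructible locus of geometrically irreducible fibres lies in a proper closed subset and your open set~$V$ is empty. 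Geometric irreducibility of the generic line section is strictly stronger than what your regular-extension argument gives, and the difference is not a formality: it is exactly Bertini's irreducibility theorem (equivalently, the surjectivity of the fundamental group of a general line section onto that of $X_{\bar k}$, for $n\geq 2$), which is the real content of the reduction from $\P^n$ to $\P^1$. Your argument cannot see this, because it uses nothing about lines beyond the irreducibility of the fibres of $U\to X$.

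To see that the inference you make is invalid in general, take $X=\Gm^3$ with coordinates $(x_1,x_2,x_3)$, let $Y\to X$ be the irreducible double cover $y^2=x_1$, and replace lines by the family of surfaces $S_b=\{x_1=b\mkern1mu x_2^2\}$ parametrised by $b\in B=\Gm$. The incidence variety $\sU\subseteq X\times B$ maps isomorphically onto~$X$, so $Z=Y\times_X\sU\simeq Y$ is irreducible and its generic fibre over~$B$ is irreducible over $\kappa(B)=\bar k(b)$, since $b$ is not a square there; yet the fibre over \emph{every} closed point $b_0\in B(\bar k)$ is reducible, as $y^2=b_0x_2^2$ splits into $y=\pm\sqrt{b_0}\,x_2$. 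Thus ``irreducible generic fibre over the function field of the parameter space'' plus constructibility yields nothing; one must establish geometric irreducibility of the generic fibre, and for the family of lines this is supplied by Bertini's irreducibility theorem (\cite[Th\'eor\`eme~6.3]{jouanoloubertini}, iterated from hyperplanes down to lines). Once that input is added, the remainder of your argument---the bookkeeping with the constant field~$E$, the transitivity of the Galois action on geometric components, and the non-emptiness of $V(k)$ for~$k$ infinite---does go through, and you recover in substance the slicing argument of the reference the paper cites.
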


Combining Corollary~\ref{cor:irredhilbertPn} with 
the remarks of~\textsection\ref{subsec:torsorsandgalois} leads to an observation, originating from Hilbert's work,
which is
extremely effective for the inverse Galois problem.  Before stating it in Corollary~\ref{cor:cortohilbert} below,
let us
recall that a variety~$X$ over a field~$k$ is said to be \emph{rational}
if it is birationally equivalent to an affine space, i.e.\ if it contains a dense open subset isomorphic
to a dense open subset of an affine space; when~$X$ is irreducible and reduced, this
means that its function field~$k(X)$ is a purely transcendental extension of~$k$.

\begin{cor}
\label{cor:cortohilbert}
Let~$k$ be a number field. Let~$G$ be a finite group.
If there exist an irreducible quasi-projective variety~$Y$ over~$k$ and a faithful action of~$G$ on~$Y$ such that
the quotient $Y/G$ is rational, then the inverse Galois problem admits a positive
solution for~$G$ over~$k$.
\end{cor}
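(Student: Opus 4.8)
The plan is to deduce the statement from Hilbert's irreducibility theorem in the form of Corollary~\ref{cor:irredhilbertPn}, after replacing the given faithful action by a free one on a suitable open subset, so as to fit the torsor framework of~\textsection\ref{subsec:torsorsandgalois}.

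First I would pass from the faithful action to a free one. The locus of~$Y$ on which~$G$ acts with nontrivial stabiliser is $\bigcup_{g \neq 1} Y^g$, where $Y^g \subseteq Y$ denotes the closed fixed-point subscheme of~$g$. Faithfulness means that no $g \neq 1$ fixes~$Y$ pointwise, so each $Y^g$ with $g \neq 1$ is a proper closed subset of the irreducible variety~$Y$; hence their finite union is proper and closed, and its complement $U \subseteq Y$ is a nonempty, dense open subset. Since stabilisers are conjugated along the $G$\nobreakdash-action, $U$ is $G$\nobreakdash-stable, and~$G$ acts freely on~$U$. The quotient $U/G$ exists (as~$U$ is quasi-projective), the projection $\pi : U \to U/G$ is a $G$\nobreakdash-torsor with~$U$ irreducible, and $U/G$ is identified with a dense open subset of~$Y/G$. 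As an open subset of a rational variety, $U/G$ is itself rational.

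Next I would invoke rationality concretely: $U/G$ contains a dense open subset~$V$ isomorphic to a dense open subset~$W$ of some~$\P^n_k$ (using $\A^n \subseteq \P^n_k$). The restriction $\pi^{-1}(V) \to V$ is a finite étale morphism whose source, being a dense open subset of the irreducible variety~$U$, is irreducible; transporting it through the isomorphism $V \isoto W$ yields an irreducible étale covering of~$W$. Corollary~\ref{cor:irredhilbertPn} then furnishes a point $x \in W(k) = V(k)$ whose fibre under~$\pi$ is irreducible. By the discussion of~\textsection\ref{subsec:torsorsandgalois}, this fibre is an irreducible $G$\nobreakdash-torsor over~$k$, i.e.\ a Galois field extension $K/k$ with $\Gal(K/k) \simeq G$, which solves the inverse Galois problem for~$G$ over~$k$.

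The only real subtlety, and hence the step I would treat most carefully, is the passage from a faithful to a free action: one must check that the free locus~$U$ is nonempty (this uses both the irreducibility of~$Y$ and faithfulness), that it is $G$\nobreakdash-stable so that $U/G$ makes sense and embeds as an open subvariety of~$Y/G$, and that the covering $\pi^{-1}(V)$ fed into Hilbert's theorem really is irreducible. The degenerate case $\dim Y = 0$ must be noted separately: there the rationality of~$Y/G$ forces $Y/G \simeq \Spec(k)$, and~$Y$ itself is already an irreducible $G$\nobreakdash-torsor over~$k$, so the conclusion is immediate without appealing to Corollary~\ref{cor:irredhilbertPn} (which requires $n \geq 1$).
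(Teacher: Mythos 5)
Your proposal is correct and follows essentially the same route as the paper: shrink to the free locus (the complement of the fixed loci of the nontrivial elements, which is proper closed by faithfulness and irreducibility), identify an open subset of the quotient with an open subset of $\P^n_k$ by rationality, and apply Corollary~\ref{cor:irredhilbertPn} to the resulting irreducible étale covering. Your additional checks ($G$\nobreakdash-stability of the free locus, the zero-dimensional degenerate case) are sound refinements of details the paper leaves implicit.
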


\begin{proof}
As~$G$ acts faithfully on~$Y$, it acts freely on a dense open subset of~$Y$, say~$V$.  Indeed,
for $g \in G$, the locus in~$Y$ fixed by~$g$ is a closed subset of~$Y$ of codimension~$\geq 1$;
one can take for~$V$ the complement of the (finite) union of these fixed loci.
After shrinking~$V$,
we may assume that $V/G$ is isomorphic to an open subset of~$\P^n_k$.
Corollary~\ref{cor:irredhilbertPn} can now be applied to the projection $V\to V/G$.
\end{proof}

\begin{example}
The order~$3$ automorphism of~$\P^1_k$ given, in homogeneous coordinates,
by $[x:y] \mapsto [y:y-x]$ induces a faithful action of $G=\Z/3\Z$ on~$\P^1_k$.
The quotient $\P^1_k/G$ is rational since it is a unirational curve (Lüroth's theorem).
Thus, any number field admits a cyclic extension of degree~$3$.
\end{example}

Ekedahl proved the following useful generalisation of Hilbert's irreducibility theorem.
We recall that~$\Omega$ denotes the set of places of a number field~$k$.

\begin{thm}[\cite{ekedahl}]
\label{th:ekedahl}
Let $\pi:Y \to X$ be a finite étale morphism between geometrically irreducible varieties
over a number field~$k$.
Let $S \subset \Omega$ be a finite subset.
If $X(k)\neq\emptyset$,
then there exists a nonempty open
subset $\sU \subset \prod_{v \in \Omega\setminus S} X(k_v)$
 such that for all $x \in X(k) \cap \sU$,
the fibre $\pi^{-1}(x)$ is irreducible.
\end{thm}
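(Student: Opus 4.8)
The plan is to translate irreducibility of the fibre into a statement about the monodromy action, and then to force that action to be as large as possible by prescribing the reduction of~$x$ at finitely many auxiliary places. Fix a geometric generic point $\bar\eta$ of~$X$ and let $F=\pi^{-1}(\bar\eta)$, a set of cardinality $d=\deg\pi$ on which $\pi_1(X,\bar\eta)$ acts through a finite quotient; write $G$ for the image of this action (the arithmetic monodromy group) and $G^\geom$ for the image of $\pi_1(X_\kbar,\bar\eta)$ (the geometric monodromy group). Since $X$ is geometrically irreducible there is an exact sequence
\[ 1 \to G^\geom \to G \to \Gal(\ell/k) \to 1, \]
where $\ell$ is the field of constants of the Galois closure of~$\pi$; and since $Y$ is geometrically irreducible, $G^\geom$ already acts transitively on~$F$, so that $G$ does as well. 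Any $x\in X(k)$ determines a section of $\pi_1(X,\bar\eta)\to\Gal(\kbar/k)$, well defined up to conjugacy, hence a homomorphism $\rho_x\colon\Gal(\kbar/k)\to G$; by construction $\pi^{-1}(x)$ is irreducible over~$k$ if and only if $\Im(\rho_x)$ acts transitively on~$F$. The first key step is the group-theoretic remark that it suffices to arrange $\Im(\rho_x)=G$, and that, by Jordan's theorem (a proper subgroup of a finite group cannot meet every conjugacy class), this equality holds as soon as $\Im(\rho_x)$ meets every conjugacy class of~$G$.

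Next I would bring in the arithmetic through Frobenius elements. Spreading $\pi$ out, choose a finite set $T\subset\Omega$ containing~$S$ and the archimedean places, together with a smooth model $\mathcal X\to\Spec\sOint_{k,T}$ with geometrically irreducible fibres and a finite étale model over it of the Galois closure. For $v\notin T$ and $x\in X(k)$ whose closure meets the special fibre in a point $\bar x_v\in\mathcal X(\Fv{v})$, the compatibility of the monodromy representation with reduction shows that the unramified Frobenius at~$v$ is sent by $\rho_x$ into the Frobenius conjugacy class $\Frob_{\bar x_v}\subset G$ attached to~$\bar x_v$; in particular $\Im(\rho_x)$ meets that class. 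Thus, imposing at finitely many places $v\notin T$ the local condition that $x$ reduce to a prescribed $\bar x_v$ guarantees that $\Im(\rho_x)$ meets each of the corresponding Frobenius classes, and this condition cuts out a nonempty open subset of $\prod_{v\notin S}X(k_v)$.

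It therefore remains to realise every conjugacy class of~$G$ as a Frobenius class at some place outside~$S$. Given a conjugacy class $C\subset G$, let $\gamma_C\in\Gal(\ell/k)$ be its image; by the Chebotarev density theorem there are infinitely many places $v\notin T$ whose Frobenius in $\Gal(\ell/k)$ equals~$\gamma_C$, and for such~$v$ the classes $\Frob_{\bar x_v}$, as $\bar x_v$ runs over $\mathcal X(\Fv{v})$, lie in the coset $\gamma_C\,G^\geom$ and, by the Lang--Weil estimates applied to the covering (equivalently, the Chebotarev density theorem for the function field of $\mathcal X_{\Fv{v}}$), exhaust every conjugacy class of~$G$ contained in that coset once $\lvert\Fv{v}\rvert$ is large enough. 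Choosing for each of the finitely many classes $C$ such a place $v_C\notin S$ and a point $\bar x_{v_C}$ with $\Frob_{\bar x_{v_C}}=C$, and letting $\sU$ be the set of local points reducing to $\bar x_{v_C}$ at each~$v_C$ and unrestricted elsewhere, we obtain a nonempty open $\sU\subset\prod_{v\notin S}X(k_v)$; any $x\in X(k)\cap\sU$ then has $\Im(\rho_x)$ meeting every conjugacy class, whence $\Im(\rho_x)=G$ and $\pi^{-1}(x)$ is irreducible.

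The main obstacle is this last step: one must know that, at a place with the correct Frobenius in $\Gal(\ell/k)$ and large residue field, each conjugacy class in the relevant coset is genuinely hit by some $\Fv{v}$-point of~$\mathcal X$. This is exactly the Lang--Weil point-count for the covering (equivalently, an effective Chebotarev theorem for function fields), and it is what forces us to work with places of large norm rather than a single auxiliary place; by contrast, the reformulation via~$\rho_x$ and the appeal to Jordan's theorem are formal, and the passage from the local conditions to the open set~$\sU$ is immediate.
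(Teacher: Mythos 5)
Your argument is correct and is essentially the proof the paper points to: the paper gives no proof of its own but cites Ekedahl's Theorem~1.3, whose argument is exactly this combination of (a) the reformulation via the section $\rho_x$ and the lemma that a proper subgroup cannot meet every conjugacy class, and (b) Chebotarev plus Lang--Weil over the residue fields to find, for each conjugacy class, an auxiliary place and a residue point realising it as a Frobenius class. The only point worth making explicit is that the local condition at each $v_C$ is nonempty open because the model is smooth, so Hensel's lemma lifts $\bar x_{v_C}$ to an $\sOint_{v_C}$\nobreakdash-point.
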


In the above statement, we view~$X(k)$ as diagonally embedded
into $\prod_{v \in \Omega \setminus S}X(k_v)$,
which we endow with the product of the $v$\nobreakdash-adic topologies.
When~$X$ is rational, the set $X(k) \cap \sU$ is automatically nonempty,
by the weak approximation property for affine spaces. In general, though, this set can be empty.
A proof of Theorem~\ref{th:ekedahl}, at least in the Galois case\footnote{It
can be checked that Theorem~\ref{th:ekedahl}, in the Galois case,
still holds when~$Y$ is only assumed to be irreducible, instead of geometrically irreducible.  Under this weaker
assumption on~$Y$,
the Galois case does imply the general case; hence the validity of Theorem~\ref{th:ekedahl} as stated (and even slightly
more generally than stated,
since this weaker assumption on~$Y$ also suffices in the non-Galois case).}, which is the only one that we shall use
(we use it in the proof of Proposition~\ref{prop:versalrationalimpliesgrunwald} below),
can be found in \cite[Theorem~1.3]{ekedahl}.

\subsection{Noether's problem: statement}

The following problem, raised by Emmy Noether, takes on particular importance
in view of Corollary~\ref{cor:cortohilbert}.

\begin{problem}[Noether]
\label{pb:noether}
Let~$G$ be a finite group and~$k$ be a field.
Choose an embedding $G \hookrightarrow S_n$ for some $n\geq 1$.
Let~$G$ act on~$\A^n_k$ through
this embedding by permuting the
coordinates.  Is the quotient $\A^n_k/G$ rational over~$k$?
\end{problem}

By Corollary~\ref{cor:cortohilbert}, when~$k$ is a number field, a positive answer to Noether's problem for~$G$ over~$k$
implies a positive answer
to the inverse Galois problem for~$G$ over~$k$.
Beyond this implication,
Noether's problem has become a central problem in the study of rationality,
and has been the focus of
much research for its own sake.

\begin{example}
\label{ex:noethersn}
Noether's problem has a positive answer, over any field, for the symmetric group $G=S_n$.
Indeed, for $G=S_n$, the quotient $\A^n_k/G$ is in fact isomorphic
to $\A^n_k$, as the ring
$k[x_1,\dots,x_n]^{S_n}$
 of symmetric polynomials coincides with the polynomial ring in the elementary symmetric
polynomials.  Thus, in particular,
every number field admits a Galois field extension with group~$S_n$, for every $n\geq 1$.
\end{example}

\begin{example}
Noether's problem has a positive answer, for any $n \geq 1$ and any embedding $G \hookrightarrow S_n$, when~$G$ is
an abelian group of exponent~$e$ and~$k$ is
a field
that contains the $e$th
roots of unity
and
 whose characteristic does not divide~$e$.
In particular, it has a positive answer for all abelian groups over~$\C$.
This is a theorem of Fischer~\cite{fischer}.
\end{example}

\begin{example}
\label{ex:noethera5}
Noether's problem has a positive answer, over any field, for the
alternating group $G=A_5$.  This is a theorem of Maeda~\cite{maeda}.
On the other hand, as soon as $n \geq 6$, Noether's problem for $G=A_n$ is still open, over any field.
\end{example}

Noether knew that her problem has a positive answer for small groups
(namely, for all subgroups of~$S_4$).
In general, however, its answer is often negative,
as we discuss in some detail in~\textsection\ref{subsec:noether:counter} below.

\subsection{Versal torsors}
For some $G$\nobreakdash-torsors $\pi:Y\to X$, the existence of rational points $x \in X(k)$
such that $\pi^{-1}(x)$ is irreducible is not only a sufficient condition for a positive answer
to the inverse Galois problem for~$G$ over~$k$, but it is also necessary.
These are the \emph{versal} torsors.

\begin{defn}
Let~$G$ be a finite group, let~$k$ be a field and let~$X$ be a variety over~$k$.
A~$G$\nobreakdash-torsor $\pi:Y\to X$ is \emph{weakly versal}
if for any field extension $k'/k$ with~$k'$ infinite,
every $G$\nobreakdash-torsor over~$k'$ can be realised as the fibre of~$\pi$ above a $k'$\nobreakdash-point
of~$X$.
It is \emph{versal} if for any dense open subset $U \subseteq X$, the induced $G$\nobreakdash-torsor
$\pi^{-1}(U) \to U$ is weakly versal.
\end{defn}

\begin{example}
\label{ex:noetherisversal}
Choose an embedding $G \hookrightarrow S_n$ for some $n\geq 1$ and let~$G$ act on~$\A^n_k$ through
this embedding by permuting the
coordinates.  Let $Y$ be the open subset of~$\A^n_k$ consisting of the points whose coordinates are all
pairwise distinct.
Then~$G$ acts freely on~$Y$ and
it can be checked,
as a consequence of Hilbert's\footnote{Despite its name Hilbert's Theorem~90,
this theorem, for arbitrary~$n$, is due to Speiser~\cite{speiser}.} Theorem~90,
 according to which the Galois cohomology set $H^1(k',\GL_n)$
is a singleton for any field~$k'$,
that
 the resulting torsor $\pi:Y\to X=Y/G$ is versal (see \cite[Example~5.5]{garibaldimerkurjevserre}).
\end{example}

\begin{example}
\label{ex:noetherisversalsln}
Choose an embedding $G \hookrightarrow \SL_n(k)$ for some $n\geq 1$ and let~$G$ act on the algebraic
group $\SL_n$ over~$k$ through this embedding by right multiplication.
This action is free and it can again be checked, as a consequence of Hilbert's Theorem~90,
that the resulting torsor $\pi:\SL_n\to \SL_n/G$ is versal.
The argument for this is the same as in \cite[Example~5.5]{garibaldimerkurjevserre} once one knows that
the Galois cohomology set $H^1(k',\SL_n)$
is a singleton for any field~$k'$; the latter fact easily follows from Hilbert's Theorem~90.
\end{example}

\begin{rmk}
\label{rmk:noname}
Two varieties~$V$ and~$W$ over~$k$ are called \emph{stably birationally equivalent}
if $V \times \A^r_k$ and $W \times \A^s_k$ are birationally equivalent
for some $r$, $s$.
It can be shown that for any finite group~$G$, the varieties~$\A^m_k/G$ and~$\SL_n/G$ appearing in Examples~\ref{ex:noetherisversal}
and~\ref{ex:noetherisversalsln},
for all values of~$m$, $n$ and all possible choices of embeddings $G \hookrightarrow S_m$ and $G \hookrightarrow \SL_n(k)$,
 all fall into the same stable birational equivalence class of varieties over~$k$.
This is the so-called ``no-name lemma'', see \cite[Corollary~3.9]{ct-sansuc-rationality}.
\end{rmk}

The notion of versality, in the context of these notes\footnote{Outside of the context discussed here,
the notion of versality notably also gives rise
to the definition of the ``essential dimension'' of a finite group~$G$ over a field~$k$---this is the minimal
dimension of a versal $G$\nobreakdash-torsor defined over~$k$---which is interesting in its own right
and has been the focus of many works (see \cite{buhlerreichstein,berhuyfavi,reichsteinicm,merkurjevsurveyed,merkurjevedbis,reichstein13}).
Even determining the essential dimension of $\Z/8\Z$ over~$\Q$
is a highly nontrivial task (see~\cite{florenceed}).}, is motivated by the following observation,
which is an improved version of
Corollary~\ref{cor:cortohilbert}:

\begin{prop}
\label{prop:versalrationalimpliesgrunwald}
Let~$k$ be a number field. Let $S_0 \subset \Omega$ be a finite subset.
Let~$G$ be a finite group.
Suppose that there exist an irreducible smooth quasi-projective variety~$Y$ over~$k$ and a free action of~$G$ on~$Y$ satisfying the following two conditions:
\begin{enumerate}[(i)]
\setlength\itemsep{.11em}
\item
the variety $X=Y/G$ satisfies weak approximation off~$S_0$, i.e.\ the diagonal embedding
$X(k) \hookrightarrow \prod_{v \in \Omega\setminus S_0}X(k_v)$ has dense image;
\item
the $G$\nobreakdash-torsor $\pi:Y\to X$ is weakly versal.
\end{enumerate}
Then Grunwald's problem admits a positive
answer for~$G$ over~$k$, for any finite subset $S \subset \Omega$ disjoint from~$S_0$.
\end{prop}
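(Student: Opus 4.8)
The plan is to convert the local Galois data at~$S$ into $k_v$-points of~$X$ using versality, to force the global fibre to be irreducible using Ekedahl's theorem, and to reconcile these two requirements by weak approximation. First I would translate the local inputs: for $v \in S$, the extension $K_v/k_v$ together with the embedding $\Gal(K_v/k_v)\inj G$ is, by Remark~\ref{rmk:pushtorsor}, the datum of a $G$\nobreakdash-torsor $Z_v$ over~$k_v$ with a distinguished connected component. Since $k_v$ is infinite and $\pi$ is weakly versal (hypothesis~(ii)), there is $x_v \in X(k_v)$ with $\pi^{-1}(x_v) \isoto Z_v$. The point to record is that the isomorphism class of $\pi^{-1}(x)$, as a $G$\nobreakdash-torsor over~$k_v$, is a locally constant function of $x \in X(k_v)$: since $\pi$ is finite étale, Hensel's lemma exhibits it as a local homeomorphism for the $v$\nobreakdash-adic topology, so fibres over nearby points are (non-canonically) isomorphic. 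Hence there is an open neighbourhood $\sU_v \subseteq X(k_v)$ of~$x_v$ with $\pi^{-1}(x) \isoto Z_v$ for all $x \in \sU_v$.

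Next I would build the global point. Realising the split torsor over each~$k_v$ by weak versality gives $X(k_v)\neq\emptyset$ for every~$v$, so $\prod_{v\in\Omega\setminus S_0}X(k_v)\neq\emptyset$, and hypothesis~(i) then yields $X(k)\neq\emptyset$; as $X$ is smooth (because $Y$ is smooth and $\pi$ is étale), the existence of a $k$\nobreakdash-point makes~$X$ geometrically irreducible. Now $\pi$ is a connected $G$\nobreakdash-torsor, hence a Galois covering, so Theorem~\ref{th:ekedahl} applies in its Galois form with $Y$ merely irreducible (as permitted by the footnote to that theorem): taking the finite set there to be $S_0\cup S$ produces a nonempty open $\sW\subseteq\prod_{v\in\Omega\setminus(S_0\cup S)}X(k_v)$ such that $\pi^{-1}(x)$ is irreducible whenever $x\in X(k)\cap\sW$. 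The product $\bigl(\prod_{v\in S}\sU_v\bigr)\times\sW$ is a nonempty open subset of $\prod_{v\in\Omega\setminus S_0}X(k_v)$, so by weak approximation off~$S_0$ there is $x\in X(k)$ lying in it.

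Such an~$x$ is the desired point: $\pi^{-1}(x)$ is an irreducible $G$\nobreakdash-torsor over~$k$, hence by \textsection\ref{subsec:torsorsandgalois} a Galois field extension $K/k$ with $\Gal(K/k)\simeq G$; and for each $v\in S$ one has $\pi^{-1}(x)\times_k k_v\isoto Z_v$, whose components are copies of $\Spec K_v$, so every completion of~$K$ at a place above~$v$ is $k_v$\nobreakdash-isomorphic to~$K_v$. The main obstacle is simultaneity: the finitely many versality constraints at~$S$ must be met together with irreducibility of the global fibre. This is precisely where Ekedahl's theorem is indispensable---plain Hilbert irreducibility would only produce one irreducible fibre rather than an open family, whereas Theorem~\ref{th:ekedahl} packages irreducibility as an open condition at the places off $S_0\cup S$, which weak approximation off~$S_0$ can satisfy at the same time as the conditions at~$S$. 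Checking that Ekedahl's theorem applies (geometric irreducibility of~$X$, and its Galois case with $Y$ only irreducible) is the delicate technical point to get right.
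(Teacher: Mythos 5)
Your argument is correct and is essentially the paper's own proof: convert the local Galois data at~$S$ into $k_v$\nobreakdash-points via weak versality, use local constancy of the isomorphism class of the fibre to get open neighbourhoods $\sU_v$, apply Ekedahl's theorem off $S_0\cup S$ to make irreducibility of the global fibre an open condition, and invoke weak approximation off~$S_0$ to satisfy all conditions simultaneously. The one caveat is your justification of local constancy: the local-homeomorphism consequence of Hensel's lemma only controls the $k_v$\nobreakdash-points of the fibre, not its isomorphism class as a $k_v$\nobreakdash-scheme (one must also track residue fields), which is exactly the content of Krasner's lemma (Lemma~\ref{lem:krasner}) that the paper quotes; on the other hand, your explicit check that~$X$ is geometrically irreducible before applying Theorem~\ref{th:ekedahl} is more careful than the paper's own write-up.
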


\begin{proof}
We shall need the following classical lemma, proved in
\cite[Proposition~3.5.74]{poonenqpoints}
and whose statement holds for any finite étale morphism~$\pi$.

\begin{lem}[Krasner]
\label{lem:krasner}
For  $v \in \Omega$,
the isomorphism class of the variety
$\pi^{-1}(x_v)$ over~$k_v$
is a locally constant function of $x_v \in X(k_v)$ with respect to the $v$\nobreakdash-adic
topology.
\end{lem}

Fix Galois field extensions $K_v/k_v$ for $v \in S$ as in Problem~\ref{pb:grunwald}
and
choose embeddings $\Gal(K_v/k_v) \hookrightarrow G$ for $v \in S$.
By Remark~\ref{rmk:pushtorsor},
these choices give rise to $G$\nobreakdash-torsors over~$k_v$ for $v \in S$.
By weak versality, the latter come from $k_v$\nobreakdash-points $x_v \in X(k_v)$.

Lemma~\ref{lem:krasner}
provides, for every $v \in S$, a neighbourhood $\sU_v \subset X(k_v)$
of~$x_v$
such that
 $\pi^{-1}(x_v')$
and $\pi^{-1}(x_v)$
are isomorphic,
as varieties over~$k_v$,
for all $x_v' \in \sU_v$.
In particular, by Remark~\ref{rmk:pushtorsor} again,
for all $v \in S$ and all $x_v'\in\sU_v$,
the fibre $\pi^{-1}(x_v')$ is isomorphic, over~$k_v$, to a disjoint union of copies of $\Spec(K_v)$.

The weak versality of~$\pi$ also implies that $X(k)\neq\emptyset$.
Theorem~\ref{th:ekedahl} therefore provides
a nonempty open subset
$\sU^0 \subset \prod_{v \in \Omega\setminus (S\cup S_0)} X(k_v)$ such that $\pi^{-1}(x)$ is irreducible
for all $x \in X(k)\cap \sU^0$.
Let $\sU=\big(\prod_{v \in S}\sU_v\big) \times \sU^0$.
As the variety~$X$ satisfies weak approximation off~$S_0$,
the set $X(k)\cap \sU$ is nonempty.
We fix $x \in X(k)\cap \sU$.

The fibre $\pi^{-1}(x)$ is now an irreducible $G$\nobreakdash-torsor
(i.e.\ $\Spec(K)$ for some Galois field extension~$K/k$ with Galois group~$G$)
whose scalar extension from~$k$ to~$k_v$, for each $v \in S$, is a disjoint union of copies of~$\Spec(K_v)$.
This proves the proposition.
\end{proof}

As smooth rational varieties satisfy weak approximation, one can apply
Proposition~\ref{prop:versalrationalimpliesgrunwald}
with $S_0=\emptyset$ whenever the variety $Y/G$ is rational and the torsor $Y \to Y/G$ is weakly versal.
In view of Example~\ref{ex:noetherisversal}, we deduce:

\begin{cor}
\label{cor:noetherimpliesgrunwald}
Given a finite group~$G$ and a number field~$k$,
a positive answer to Noether's problem for~$G$ and~$k$ implies a positive answer to Grunwald's problem for~$G$
and~$k$, for any $S \subset \Omega$.
\end{cor}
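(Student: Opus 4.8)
The plan is to apply Proposition~\ref{prop:versalrationalimpliesgrunwald} with $S_0=\emptyset$ to the versal torsor furnished by Example~\ref{ex:noetherisversal}. Concretely, I would fix an embedding $G \hookrightarrow S_n$ witnessing the positive answer to Noether's problem, let~$G$ act on~$\A^n_k$ by permuting the coordinates, and let~$Y \subseteq \A^n_k$ be the open subset of points whose coordinates are pairwise distinct. Then~$G$ acts freely on the smooth irreducible quasi-projective variety~$Y$, and I set $X=Y/G$. Since this is exactly the situation of Example~\ref{ex:noetherisversal}, the torsor $\pi:Y\to X$ is versal, and in particular weakly versal, so condition~(ii) of the proposition is in place.

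It remains to verify condition~(i) with $S_0=\emptyset$, i.e.\ that~$X$ satisfies weak approximation with no exceptional places. First I would observe that~$X$ is smooth: since~$G$ acts freely on the smooth variety~$Y$, the projection~$\pi$ is étale, so~$X$ is smooth. Next I would check that~$X$ is rational. Because~$Y$ is a $G$\nobreakdash-stable dense open subset of~$\A^n_k$, the projection $q:\A^n_k\to\A^n_k/G$ carries~$Y$ onto a dense open subset $U\subseteq\A^n_k/G$ with $Y=q^{-1}(U)$, and $X=Y/G$ is then canonically isomorphic to~$U$. The Noether hypothesis is precisely that $\A^n_k/G$ is rational; hence its dense open subset~$U\cong X$ is rational as well. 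As recalled just above, a smooth rational variety over a number field satisfies weak approximation, so condition~(i) holds with $S_0=\emptyset$.

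With both hypotheses verified, Proposition~\ref{prop:versalrationalimpliesgrunwald} applies with $S_0=\emptyset$ and delivers a positive answer to Grunwald's problem for~$G$ over~$k$ for every finite subset $S\subset\Omega$ disjoint from~$\emptyset$, that is, for every~$S$. I do not expect a serious obstacle: the corollary is the conjunction of the versality computation of Example~\ref{ex:noetherisversal} with the weak-approximation criterion of Proposition~\ref{prop:versalrationalimpliesgrunwald}. The only points requiring a moment's care are the two verifications feeding condition~(i)---the smoothness of~$X$, immediate from freeness of the action, and the transfer of rationality from $\A^n_k/G$ to its dense open subset~$X$---together with the black-box input, invoked here without proof, that smooth rational varieties enjoy weak approximation.
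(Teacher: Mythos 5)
Your proposal is correct and follows exactly the paper's route: the paper derives the corollary by applying Proposition~\ref{prop:versalrationalimpliesgrunwald} with $S_0=\emptyset$ to the versal torsor of Example~\ref{ex:noetherisversal}, using that smooth rational varieties satisfy weak approximation. The two small verifications you spell out (smoothness of $X$ via freeness of the action, and rationality of $X$ as a dense open subset of $\A^n_k/G$) are left implicit in the paper but are exactly the right details to check.
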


Corollary~\ref{cor:noetherimpliesgrunwald} was first established by Saltman (see \cite[Theorem~5.1, Theorem~5.9]{saltmangeneric}).
As an example of an application, Corollary~\ref{cor:noetherimpliesgrunwald} implies that over any number field~$k$,
Grunwald's problem has a positive answer for~$S_n$ and for~$A_5$ over~$k$,
without the need to exclude any place from $S \subset \Omega$
(see
Example~\ref{ex:noethersn} and Example~\ref{ex:noethera5}).

\subsection{Noether's problem: some counterexamples}
\label{subsec:noether:counter}

The hope that a positive solution to the inverse Galois problem might in general come
from a positive solution to Noether's problem turned out, however, to be too optimistic.
Indeed, Noether's problem seems to have a negative solution more often than not,
as we briefly discuss below.

\subsubsection{Counterexamples among abelian groups}

Noether's problem has a negative answer even for cyclic groups over~$\Q$.
Swan and Voskresenski\u{\i}
discovered, at the end of the 1960's, the counterexample $\Z/47\Z$ over~$\Q$
(see \cite{swan47}, \cite{voskbirlin}). An even smaller counterexample, the group $\Z/8\Z$ over~$\Q$,
was then exhibited
in~\cite{endomiyata}, \cite{lenstrainvent}, \cite{voskfieldofinvariants}.
As Saltman~\cite{saltmangeneric} later observed,
Corollary~\ref{cor:noetherimpliesgrunwald}
provides a direct proof that Noether's problem
admits a negative answer for~$\Z/8\Z$ over~$\Q$.
Indeed,
by this corollary,
it suffices to show that Grunwald's problem has a negative answer for $G=\Z/8\Z$, $k=\Q$ and $S=\emptyset$,
and this is exactly what Wang had done in the 1940's:

\begin{prop}[Wang]
\label{prop:wang}
In a cyclic field extension $K/\Q$
of degree~$8$,
the prime~$2$ cannot be inert.
In other words, the completion of a cyclic field extension $K/\Q$ of degree~$8$
at a place dividing~$2$ cannot
be the unramified extension of~$\Q_2$ of degree~$8$.
\end{prop}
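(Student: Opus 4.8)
The plan is to prove the equivalent assertion that $2$ cannot be inert in a cyclic octic extension $K/\Q$, since this is exactly the statement about completions: if $2$ were inert, the unique place $w$ above it would have residue degree $8$ and ramification index $1$, so $K_w$ would be the unramified degree-$8$ extension of $\Q_2$; conversely, that completion forces $f=8$, hence $2$ inert. So assume for contradiction that $2$ is inert in a cyclic extension $K/\Q$ with $\Gal(K/\Q)\cong\Z/8\Z$. Being inert, $2$ is in particular unramified, so $K/\Q$ is unramified at $2$ and its conductor is odd; hence by the Kronecker--Weber theorem $K\subseteq\Q(\zeta_n)$ for some odd integer $n$ (one may take $n$ to be the conductor). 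Under the resulting surjection $\phi\colon(\Z/n\Z)^\times\twoheadrightarrow\Gal(K/\Q)\cong\Z/8\Z$, the Artin symbol $\Frob_2$ is the image $\phi(\bar 2)$ of the residue class of $2$, and since $2$ is inert this image \emph{generates} $\Z/8\Z$.

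The key step is to show that $\phi(\bar 2)$ can in fact never generate $\Z/8\Z$, because it always lands in the index-two subgroup $2\cdot\Z/8\Z$. First I would decompose $(\Z/n\Z)^\times\cong\prod_{p\mid n}(\Z/p^{a_p}\Z)^\times$ and note that, $\Z/8\Z$ being a $2$-group, $\phi$ factors through the Sylow $2$-subgroup $\prod_{p\mid n}\Z/2^{e_p}\Z$, where $2^{e_p}\parallel p-1$ (the factor $p^{a_p-1}$ being odd as $p$ is odd). Writing $\phi=\prod_p\phi_p$ with $\phi_p\colon\Z/2^{e_p}\Z\to\Z/8\Z$ and letting $\bar 2_p\in\Z/2^{e_p}\Z$ be the image of the class of $2$, I would reduce to showing that each local contribution $\phi_p(\bar 2_p)$ is even, whence so is their sum $\phi(\bar 2)$.

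The crux -- and the only place where arithmetic beyond bookkeeping enters -- is the case analysis on $e_p$. If $e_p\le 2$, then $\phi_p$ has image of order dividing $4$, and every subgroup of $\Z/8\Z$ of order dividing $4$ is contained in $2\cdot\Z/8\Z$; thus $\phi_p(\bar 2_p)$ is even irrespective of $\bar 2_p$. If $e_p\ge 3$, then $p\equiv 1\pmod 8$, and here I would invoke the supplement to quadratic reciprocity, $\left(\frac{2}{p}\right)=1$ for $p\equiv 1\pmod 8$: this shows $2$ is a square modulo $p$, hence modulo $p^{a_p}$ by Hensel's lemma, so $\bar 2_p\in 2\cdot\Z/2^{e_p}\Z$ and $\phi_p(\bar 2_p)=2\,\phi_p(w_p)$ is again even. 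Summing, $\phi(\bar 2)$ is even, contradicting that it generates $\Z/8\Z$. I expect this parity computation -- and in particular the recognition that the obstruction is precisely the coincidence ``$p\equiv 1\pmod 8\Rightarrow 2$ is a quadratic residue'' -- to be the main obstacle. Equivalently, it amounts to the statement that the quadratic subfield $F=\Q(\sqrt d)$ of $K$, in which $2$ would have to be inert (forcing $d\equiv 5\pmod 8$), must instead have $2$ split ($d\equiv 1\pmod 8$), which is the desired contradiction.
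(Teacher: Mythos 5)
Your proof is correct and complete: the reduction to $K\subseteq\Q(\zeta_n)$ with $n$ odd via Kronecker--Weber, the factorisation of $\phi$ through the $2$\nobreakdash-Sylow subgroups of the local factors, and the case split on $e_p$ (with the second supplement to quadratic reciprocity handling $p\equiv 1\pmod 8$) together show that $\Frob_2$ lands in $2\cdot\Z/8\Z$ and hence cannot generate. The paper itself gives no proof, referring instead to the end of \textsection5 of Swan's survey for ``an elementary proof''; your argument is exactly that standard elementary argument, so there is nothing to add.
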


An elementary proof can be found in \cite[p.~29, end of~\textsection5]{swansurvey}.

Further work on Noether's problem for abelian groups, by Endo, Miyata,
Voskresenski\u{\i} and Lenstra, led to a complete characterisation, by Lenstra~\cite{lenstrainvent}, of 
the stable rationality of the quotient $\A^n_k/G$ appearing in Problem~\ref{pb:noether} (and even
of its rationality, in the case where~$G$ acts through its regular representation), when~$G$ is a finite
abelian group
and~$k$ is an arbitrary field.   This characterisation is in terms of
the arithmetic of cyclotomic number fields.
For cyclic groups over~$\Q$, it reads as follows (see \cite[\textsection3]{lenstracyclic}):

\begin{thm}[Lenstra]
\label{th:lenstra}
Let~$n \geq 1$ be an integer.
Let~$G=\Z/n\Z$ faithfully act on~$\A^n_\Q$ by cyclically
permuting the coordinates.
The following conditions are equivalent:
\begin{enumerate}
\item The variety $\A^n_\Q/G$ is rational.
\item The variety $\A^n_\Q/G$ is stably rational.
\item
The integer~$n$ is not divisible by~$8$, and for every prime factor~$p$ of~$n$,
if $s$ denotes the $p$\nobreakdash-adic valuation of~$n$,
the cyclotomic ring $\Z\big[\zeta_{(p-1)p^{s-1}}\big]$ contains an element whose norm is equal to~$p$ or to~$-p$.
\end{enumerate}
\end{thm}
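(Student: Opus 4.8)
The implication $(1)\Rightarrow(2)$ is immediate, so the substance lies in the equivalence of $(2)$ and $(3)$ together with the upgrade from stable rationality to rationality in $(3)\Rightarrow(1)$. My plan is to translate the geometric question into the language of algebraic tori and $G$\nobreakdash-lattices, where stable rationality becomes the vanishing of a cohomological invariant, and then to treat the two arithmetic conditions in $(3)$---the divisibility $8\nmid n$ and the norm conditions at odd primes---by separate arguments, the first of which I can already extract from the results developed above.

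First I would establish the necessity of $8\nmid n$ directly from Wang's theorem. Suppose $\A^n_\Q/G$ is stably rational and that $8\mid n$; I claim this contradicts Proposition~\ref{prop:wang}. The cyclic action of $G$ on $\A^n_\Q$ is precisely the permutation action of Example~\ref{ex:noetherisversal} for the regular embedding $G\hookrightarrow S_n$, so on the open locus $Y$ of points with pairwise distinct coordinates the torsor $\pi\colon Y\to X=Y/G$ is free and versal and $X$ is smooth. Being a dense open subset of (a variety birational to) the stably rational variety $\A^n_\Q/G$, the smooth variety $X$ satisfies weak approximation, i.e.\ off $S_0=\emptyset$. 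Proposition~\ref{prop:versalrationalimpliesgrunwald}, applied with $S_0=\emptyset$, then asserts that Grunwald's problem for $G=\Z/n\Z$ over~$\Q$ has a positive answer for every finite~$S$. Taking $S=\{v\}$ with $v$ the $2$\nobreakdash-adic place and prescribing $K_v$ to be the unramified extension of $\Q_2$ of degree~$n$ (a cyclic, hence $G$\nobreakdash-realisable, local extension), I obtain a cyclic field extension $K/\Q$ of degree~$n$ in which~$2$ is inert. Its unique degree-$8$ subextension is then cyclic of degree~$8$ with~$2$ inert, contradicting Proposition~\ref{prop:wang}. Hence $8\nmid n$ is necessary for~$(2)$.

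For the remaining content I would pass to tori. By the no-name lemma (Remark~\ref{rmk:noname}) I am free to replace the regular representation by any convenient faithful model, and the standard dictionary between linear and multiplicative invariants of a finite abelian group identifies the stable birational class of $\A^n_\Q/G$ with that of an algebraic torus~$T$ over~$\Q$ whose character lattice is read off from the decomposition $\Q[G]\isoto\prod_{d\mid n}\Q(\zeta_d)$; concretely the lattice is built from $\bigoplus_{d\mid n}\Z[\zeta_d]$ with a generator of~$G$ acting by multiplication by~$\zeta_d$, so that~$T$ is assembled from the norm-type tori attached to the cyclotomic extensions $\Q(\zeta_{p^s})/\Q$ for the prime powers $p^s$ exactly dividing~$n$. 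By the theorem of Voskresenski\u{\i}--Endo--Miyata, $T$ (and hence $\A^n_\Q/G$) is stably rational if and only if the flabby class $[\widehat T\,]^{\mathrm{fl}}$ of this character lattice vanishes, i.e.\ the lattice becomes stably permutation after a flasque resolution. The computation of this class decomposes over the primes $p\mid n$: for each odd~$p$ the local contribution is governed by the $(\Z/p^s)^\times$\nobreakdash-module $\Z[\zeta_{p^s}]$, and unwinding the associated Tate cohomology and invoking class field theory reinterprets its vanishing as the existence of an element of norm $\pm p$ in $\Z[\zeta_{(p-1)p^{s-1}}]$ (the ring attached to the subgroup $(\Z/p^s)^\times$), while the contribution at $p=2$ reproduces exactly the obstruction $8\nmid n$ found above. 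This identification of the flabby class with the explicit norm condition is the technical heart of the argument and the step I expect to be the main obstacle.

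Finally, $(3)\Rightarrow(1)$ asks for genuine rationality, whereas the vanishing of the flabby class a priori yields only stable rationality. Here I would exploit the special structure of the cyclic, regular-representation case: when the norm conditions hold, the relevant tori are not merely stably rational but admit an explicit rational parametrisation obtained by writing down invariant rational functions adapted to the cyclotomic decomposition of $\Q[G]$. Combining this with the equivalence $(2)\Leftrightarrow(3)$ established via the flabby class, and with the trivial $(1)\Rightarrow(2)$, closes the cycle of implications and proves the theorem.
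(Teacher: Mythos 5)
The paper itself does not prove this theorem --- it is quoted from Lenstra's work \cite{lenstracyclic} --- so there is no internal proof to compare against; I judge your proposal on its own terms. Your first step is sound and complete: deducing the necessity of $8\nmid n$ from stable rationality via weak approximation for smooth stably rational varieties, the versality of the torsor of Example~\ref{ex:noetherisversal}, Proposition~\ref{prop:versalrationalimpliesgrunwald} with $S_0=\emptyset$, and Proposition~\ref{prop:wang} is exactly Saltman's observation recorded in the paper just before the statement of the theorem (there for $n=8$; your reduction of the case $8\mid n$ to the case $n=8$, by passing to the degree-$8$ subextension of a cyclic degree-$n$ extension in which $2$ is inert, is correct, since the decomposition group at $2$ surjects onto the quotient $\Z/8\Z$).

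Everything after that, however, is a plan rather than a proof, and the two steps you defer are precisely the hard content of the theorem. First, the identification of the vanishing of the flabby class of the relevant $(\Z/p^s\Z)^\times$\nobreakdash-lattices with the existence of an element of norm $\pm p$ in $\Z\big[\zeta_{(p-1)p^{s-1}}\big]$ (equivalently, with the principality of a degree-one prime above~$p$ in that ring) is the core of the Endo--Miyata--Lenstra computation; you label it ``the main obstacle'' and supply no argument. Note also that the passage from the linear quotient $\A^n_\Q/G$ to a torus is not a direct diagonalisation, since the regular representation of $\Z/n\Z$ does not split over~$\Q$; it requires the stable equivalence between linear and monomial invariant fields, which itself needs proof and is not covered by the no-name lemma alone. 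Second, the implication $(3)\Rightarrow(1)$ asks for genuine rationality, and ``writing down invariant rational functions adapted to the cyclotomic decomposition'' is not an argument: stable rationality is strictly weaker than rationality in general (as the paper notes, citing \cite{bctssd}), so the upgrade is a separate nontrivial theorem specific to this situation. As it stands, your proposal establishes only the necessity of $8\nmid n$ in condition~(3).
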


We recall that a variety is said to be \emph{stably rational} if its product with an affine space of large enough
dimension is rational.
Stable rationality is known
to be strictly weaker than rationality in general, even over~$\C$, see \cite{bctssd}.

Theorem~\ref{th:lenstra} when~$n$ is a prime number is due to
Voskresenski\u{\i}~\cite{voskprime}.  Even when~$n$ is prime, determining whether condition~(3)
of Theorem~\ref{th:lenstra} does or does not hold for a given~$n$ is in general a hard problem;
for instance, it is only recently that this condition was shown to fail for $n=59$ (see \cite[Added remark~3.2]{hoshicomputer}).
Even more recently, based on Theorem~\ref{th:lenstra}, on a height estimate due to Amoroso and Dvornicich~\cite{amorosodvornicich}
and on
extensive computer calculations run by Hoshi~\cite{hoshicomputer}, among other tools,
Plans~\cite{plans} was able to give a complete answer to Noether's problem for cyclic groups over~$\Q$:

\begin{thm}[Plans]
\label{th:plans}
The conditions of Theorem~\ref{th:lenstra} are equivalent to the following:
\begin{enumerate}
\setcounter{enumi}{3}
\item The integer~$n$ divides
$2^2 \cdot 3^m \cdot 5^2 \cdot 7^2 \cdot 11 \cdot 13 \cdot 17 \cdot 19 \cdot 23 \cdot 29 \cdot 31 \cdot 37 \cdot 41 \cdot 43 \cdot 61 \cdot 67 \cdot 71$
for some integer $m \geq 0$.
\end{enumerate}
In particular, Noether's problem has a negative answer over~$\Q$ for $G=\Z/p\Z$ for all but finitely many
prime numbers~$p$.
\end{thm}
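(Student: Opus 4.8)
The plan is to show that condition~(3) of Theorem~\ref{th:lenstra} decouples into a separate constraint on each prime power dividing~$n$, and then to identify the admissible prime powers explicitly; the resulting list is exactly~(4). \emph{Reformulation.} Apart from the clause ``$8\nmid n$'', condition~(3) is a condition imposed independently on each prime power $p^{s}$ exactly dividing~$n$ (with $s=v_p(n)$). Fix such a $p^s$ and set $F_s=\Q(\zeta_{(p-1)p^{s-1}})$, whose ring of integers is $\sOint_{F_s}=\Z[\zeta_{(p-1)p^{s-1}}]$. Since $p\equiv 1\pmod{p-1}$, the Frobenius of~$p$ in $\Gal(\Q(\zeta_{p-1})/\Q)\cong(\Z/(p-1))^{\times}$ is trivial, so $p$ splits completely in $\Q(\zeta_{p-1})$, and $p$ is totally ramified in $F_s$ over $\Q(\zeta_{p-1})$. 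Hence every prime $\mathfrak p_s\mid p$ of $F_s$ has residue degree~$1$ and norm~$p$, there are $\phi(p-1)$ of them, and $\Gal(F_s/\Q)$ permutes them transitively. An element of norm~$\pm p$ is precisely a generator of one such prime (an ideal of prime norm is prime); since principality is Galois\nobreakdash-stable, the norm condition for~$p^s$ is equivalent to the vanishing of the class $[\mathfrak p_s]\in\Cl(F_s)$.

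\emph{Monotonicity and thresholds.} Next I would exploit the tower $\Q(\zeta_{p-1})=F_1\subset F_2\subset\cdots$. As the relative residue degree of $\mathfrak p_s$ over $\mathfrak p_{s-1}$ is~$1$, the relative norm satisfies $\Nr_{F_s/F_{s-1}}(\mathfrak p_s)=\mathfrak p_{s-1}$; the induced map $\Cl(F_s)\to\Cl(F_{s-1})$ sends $[\mathfrak p_s]$ to $[\mathfrak p_{s-1}]$ and carries principal ideals to principal ideals. Therefore $[\mathfrak p_s]=0$ forces $[\mathfrak p_{s-1}]=0$: the norm condition for~$p^s$ implies it for~$p^{s-1}$. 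Consequently the set of~$s$ for which it holds is an initial segment $\{1,\dots,s_p\}$ (possibly empty, or all of $\Z_{\ge 1}$), and condition~(3) becomes the assertion that $v_2(n)\le 2$ and $v_p(n)\le s_p$ for every odd prime~$p$.

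\emph{Determining the thresholds.} For $p\in\{2,3\}$ one has $F_s=\Q(\zeta_{p^{s-1}})$ (using $\Q(\zeta_{2\cdot3^{s-1}})=\Q(\zeta_{3^{s-1}})$), and the prime above~$p$ is principal for every~$s$, generated by $1-\zeta_{p^{s-1}}$ when $s\ge2$ and trivially for $s=1$; hence $s_3=\infty$, contributing the factor~$3^m$, while for $p=2$ the sole constraint is the clause $8\nmid n$, contributing~$2^2$. For odd~$p$ the threshold $s_p$ is governed by the class groups of the $F_s$, and this is where the real work lies. The decisive finiteness input is the Amoroso--Dvornicich lower bound~\cite{amorosodvornicich} for the Weil height of non\nobreakdash-torsion elements of abelian extensions of~$\Q$: it yields an explicit bound beyond which a prime above~$p$ cannot be principal, so that $s_p\ge 1$ can hold for only finitely many~$p$. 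For the remaining primes, Hoshi's extensive class\nobreakdash-group computations~\cite{hoshicomputer} pin down each $s_p$ exactly, giving $s_5=s_7=2$, $s_p=1$ for $p\in\{11,13,17,19,23,29,31,37,41,43,61,67,71\}$, and $s_p=0$ for all other primes (for $n$ prime this recovers Voskresenski\u{\i}'s criterion~\cite{voskprime}). Assembling these exponents reproduces precisely the product in~(4), and the closing assertion is then immediate, since only finitely many primes appear in that product. The main obstacle is exactly this last step: converting the Amoroso--Dvornicich height estimate into an effective upper bound on the primes that can occur, and then discharging the resulting finite but delicate battery of cyclotomic class-group computations.
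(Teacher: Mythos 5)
The paper does not prove this theorem: it is quoted from Plans~\cite{plans}, with only a one-line indication that the proof rests on Theorem~\ref{th:lenstra}, the Amoroso--Dvornicich height bound~\cite{amorosodvornicich} and Hoshi's computations~\cite{hoshicomputer}. Your outline reconstructs exactly that strategy, and the reduction you do carry out (decoupling condition~(3) prime by prime, translating the norm condition into principality of the degree-one primes above~$p$, and the monotonicity in~$s$ via the relative norm on class groups) is sound; just be aware that the two decisive inputs --- the effective bound on~$p$ extracted from the height estimate and the finite list of class-group/principality verifications --- are invoked from the cited references rather than proved, as you yourself acknowledge.
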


\subsubsection{Counterexamples over~$\C$}
\label{subsubsec:counterexamplesoverc}

For non-abelian groups, Noether's problem has a negative answer even over~$\C$.  Saltman~\cite{saltmannoether}
gave the first counterexamples over~$\C$.
His results were then generalised by Bogomolov~\cite{bogomolovbrnr},
who established the following theorem
(see \cite[\textsection7]{ct-sansuc-rationality}
and \cite[\textsection6.6]{gilleszamuelycsa} for accounts of its proof):

\begin{thm}[Bogomolov's formula]
\label{th:bogomolovformula}
Let~$n \geq 1$ and $G \subset \SL_n(\C)$ be a finite subgroup.
The unramified Brauer group of the complex variety $\SL_n/G$ is isomorphic to
\begin{align}
\label{eq:bogomolovkernel}
\Ker\Big(H^2(G,\Q/\Z) \to \prod H^2(H,\Q/\Z)\Big)\rlap,
\end{align}
where the product ranges over all \emph{bicyclic} subgroups $H\subseteq G$ (i.e.\ abelian subgroups of~$G$
that are generated by at most two elements).
\end{thm}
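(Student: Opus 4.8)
The plan is to reduce the statement to a purely group-cohomological computation using the stable birational invariance of the unramified Brauer group, and then to translate the ``unramified'' condition into the vanishing of restriction maps by analysing the inertia groups of divisorial valuations. First I would exploit that, over~$\C$, the unramified Brauer group $\Br_{\nr}$ of a smooth projective model is a stable birational invariant. By the no-name lemma (Remark~\ref{rmk:noname}), the variety $\SL_n/G$ is stably birationally equivalent to $V/G$ for \emph{any} faithful linear representation~$V$ of~$G$; hence $\Br_{\nr}(\SL_n/G)=\Br_{\nr}(\C(V)^G)$ and I am free to choose~$V$ conveniently. Set $L=\C(V)$, a purely transcendental extension of~$\C$, and $K=L^G$, so that $L/K$ is Galois with group~$G$.

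Next I would identify the ambient abelian group into which $\Br_{\nr}(K)$ embeds. Since~$G$ is finite and $\C^*$ is divisible with torsion subgroup~$\mu_\infty\cong\Q/\Z$ and uniquely divisible (hence cohomologically trivial) quotient, one gets $H^2(G,\C^*)\cong H^2(G,\Q/\Z)$. The quotient $L^*/\C^*$ is a \emph{permutation} $G$-module, being free abelian on the prime divisors of~$V$, which~$G$ permutes; applying $H^*(G,-)$ to
\[ 1 \to \C^* \to L^* \to L^*/\C^* \to 1 \]
together with Hilbert's Theorem~90 ($H^1(G,L^*)=0$) and $H^1(G,L^*/\C^*)=0$ yields an injection $H^2(G,\C^*)\hookrightarrow H^2(G,L^*)=\Br(L/K)\subseteq\Br(K)$, whose image is the kernel of the residue map along the branch divisors of $V\to V/G$ (these residues land in $\bigoplus_i \Hom(I_i,\Q/\Z)$, where $I_i$ is the cyclic inertia group of the $i$-th branch divisor). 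To place $\Br_{\nr}(K)$ inside this image I would argue that every unramified class is split by~$L$: since $L$ is rational over~$\C$ we have $\Br_{\nr}(L)=0$, so a restriction--corestriction argument (controlled by $[L:K]=|G|$) gives $\Br_{\nr}(K)\subseteq\Br(L/K)$; and vanishing of residues along the branch divisors then forces $\Br_{\nr}(K)\subseteq H^2(G,\Q/\Z)$.

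It remains to characterise, among classes $\alpha\in H^2(G,\Q/\Z)$, those that are unramified at \emph{all} divisorial valuations of~$K$, not merely the branch ones. A local computation expresses the residue $\partial_v\alpha$ in terms of the restriction of~$\alpha$ to the inertia (and decomposition) group of a valuation of~$L$ above~$v$. Because $H^2(C,\Q/\Z)=0$ for every cyclic group~$C$, the codimension-one inertia groups contribute no condition by themselves; the genuine constraints come from valuations whose inertia is abelian of rank two, that is, bicyclic, and matching $\partial_v$ with $\Res^G_A\colon H^2(G,\Q/\Z)\to H^2(A,\Q/\Z)$ for such $A=\langle g,h\rangle$ produces exactly the kernel~\eqref{eq:bogomolovkernel}.

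The main obstacle is precisely this last geometric--cohomological dictionary. On one hand, one must show that a nonzero class $\Res^G_A\alpha$ with $A$ bicyclic is always detected by some divisorial valuation: this requires constructing, after passing to a suitable birational model of $V/G$ where two branch divisors are made to cross, a prime divisor whose inertia group is~$A$ (for instance from a monomial valuation centred on a codimension-two component of the fixed locus of $\langle g,h\rangle$). On the other hand, one must prove the converse reduction, namely that the inertia group of an \emph{arbitrary} divisorial valuation of~$K$ is abelian of rank at most two, so that vanishing of all such $\Res^G_A\alpha$ forces every residue to vanish and~$\alpha$ to be unramified. Carrying out this classification of the inertia of divisorial valuations---showing that only bicyclic groups occur and that each of them is realised---is the technical heart of Bogomolov's argument and is what pins down the product in~\eqref{eq:bogomolovkernel} to range over bicyclic subgroups alone.
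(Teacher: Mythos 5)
The paper does not actually prove Theorem~\ref{th:bogomolovformula}; it refers to \cite{ct-sansuc-rationality} and \cite{gilleszamuelycsa} for the proof. Your overall architecture matches those accounts: reduction via the no-name lemma and stable birational invariance, the identification of $H^2(G,\Q/\Z)\cong H^2(G,\C^*)$ with the subgroup of $\Br(L/K)$ killed by the residues at the branch divisors (using that $L^*/\C^*$ is a permutation module and Hilbert's Theorem~90), and the containment $\Br_\nr(K)\subseteq\Br(L/K)$. (For that containment the restriction--corestriction argument is a red herring: what you need is only that restriction of an unramified class is unramified, together with $\Br_\nr(L)=0$.)

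The step you correctly identify as the technical heart is, however, mis-stated in a way that would make the argument fail. For a divisorial valuation $v$ of $K$ with residue field of characteristic zero containing $\C$ (hence all roots of unity), the inertia group $I$ of an extension $w$ to $L$ is always \emph{cyclic} --- it injects into $\Hom(\Gamma_w/\Gamma_v,\kappa(w)^*)$ with $\Gamma_w/\Gamma_v$ cyclic --- and it is \emph{central} in the decomposition group $D$, since $D$ acts on $I$ through the cyclotomic character of the residue field, which is trivial here. Consequently, your proposed classification (``the inertia group of an arbitrary divisorial valuation is abelian of rank at most two'') and your proposed converse construction (``a prime divisor whose inertia group is~$A$'' for a bicyclic non-cyclic $A$) are both impossible: non-cyclic groups never occur as inertia groups in this setting. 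The bicyclic subgroups enter differently. For a valuation with $I=\langle g\rangle$ central in $D$, the residue $\partial_v\alpha$ lies in $\Hom(D/I,\Q/\Z)$ and is computed by pairing $\alpha$ with the commuting pairs $(g,h)$ for $h\in D$; it vanishes if and only if $\Res_{\langle g,h\rangle}\alpha=0$ for all $h\in D$, and each $\langle g,h\rangle$ is bicyclic precisely because $g$ is central in $D$. This yields the inclusion of the kernel \eqref{eq:bogomolovkernel} into $\Br_\nr$. For the reverse inclusion the standard route is not valuation-theoretic at all: for any bicyclic (indeed any abelian) subgroup $A$, restrict an unramified class to $\C(V)^A$, which is rational over $\C$ by Fischer's theorem, so its unramified Brauer group vanishes and hence $\Res_A\alpha=0$. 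Finally, note that your remark that $H^2(C,\Q/\Z)=0$ for cyclic $C$ does not show that cyclic inertia ``contributes no condition'': the residue lands in $\Hom(D/I,\Q/\Z)$, not in $H^2(I,\Q/\Z)$, and it is generally nonzero even though the inertia is cyclic.
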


We recall that the Brauer group, defined by Grothendieck as $H^2_\et(-,\Gm)$, is a stable birational invariant among smooth proper
varieties over a field of characteristic~$0$, and
 that the \emph{unramified Brauer group} of a variety over a field of characteristic~$0$
is by definition
the Brauer group of any smooth proper variety
birationally equivalent to it; for instance, the unramified Brauer group of~$\A^n_\C$ is trivial.
Thus, if the unramified Brauer group of a variety over~$\C$ does not vanish,
then this variety is not stably rational, a fortiori it is not rational.
The unramified Brauer group was first considered and used as a
 tool for rationality questions
by Saltman~\cite{saltmangenericmatrices,saltmannoether}.
For smooth proper unirational varieties over~$\C$, it coincides with
the invariant that had earlier been employed
by Artin and Mumford~\cite{artinmumford}
to give ``elementary'' examples of complex unirational threefolds failing to be
rational.
For a thorough treatment of the Brauer group, we refer the reader to \cite{ctskobook}.

In view of Remark~\ref{rmk:noname}, Bogomolov's formula gives an easy recipe for computing the
unramified
Brauer group of the variety $\A^n_\C/G$ that appears in Noether's problem over~$\C$.
The kernel~\eqref{eq:bogomolovkernel} can be computed to be nonzero
for some $p$\nobreakdash-groups~$G$, thus yielding counterexamples to Noether's problem over~$\C$
(see \cite[Example~7.5]{ct-sansuc-rationality}, \cite[\textsection6.7]{gilleszamuelycsa}).

Other counterexamples over~$\C$ were later produced by Peyre~\cite{peyrenoether} based on a further
stable birational invariant 
 introduced by
Colliot-Thélène and Ojanguren~\cite{ctojanguren}, called
 \emph{unramified cohomology of degree~$3$}.  The unramified Brauer
group coincides with unramified cohomology of degree~$2$.

Many more results about Noether's problem can be found in the survey~\cite{hoshisurvey}.

\subsection{Retract rationality}
Saltman introduced a useful weakening of the notion of stable rationality:
a variety~$X$ over a field~$k$ is said to be \emph{retract rational} 
if there exist an integer $n\geq 1$, a dense open subset $U \subseteq \A^n_k$
and a morphism $U \to X$ that admits a rational section.
Retract rationality is a stable birational invariant.

In the situation of Noether's problem, it can happen that the variety $\A^n_k/G$ fails to be rational
and even to be stably rational, but is nevertheless retract rational.
For instance this is so when $G=\Z/47\Z$ and $k=\Q$:

\begin{thm}[Saltman~\cite{saltmangeneric}]
\label{thm:saltmangeneric}
Taking up the notation of Problem~\ref{pb:noether},
assume that~$G$ is abelian, that~$k$ has characteristic~$0$,
and,
letting~$2^r$ denote the
highest power of~$2$ that divides the exponent of~$G$, that the cyclotomic field extension $k(\zeta_{2^r})/k$
is cyclic. Then the quotient $\A^n_k/G$ is retract
rational over~$k$.
\end{thm}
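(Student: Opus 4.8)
The plan is to translate the question into the language of algebraic tori and to invoke the flasque-resolution formalism of Colliot-Thélène and Sansuc, for which the cyclicity hypothesis will turn out to be exactly what is needed. Since retract rationality is a stable birational invariant, the no-name lemma (Remark~\ref{rmk:noname}) first lets me replace the given permutation action by the action on the regular representation, and then restrict to the dense open $G$\nobreakdash-stable subtorus $\Gm^n \subseteq \A^n_k$ on which all coordinates are invertible; thus $\A^n_k/G$ is birationally equivalent to $\Gm^n/G$. Writing $e$ for the exponent of~$G$, $L=k(\zeta_e)$ and $\Gamma=\Gal(L/k)$, the $G$\nobreakdash-action diagonalises over~$L$ through the characters of~$G$, so $\Gm^n/G$ is an algebraic $k$\nobreakdash-torus~$T$ whose character module is the $\Gamma$\nobreakdash-lattice $M=\Ker\big(\Z[\hat G]\to \hat G\big)$, where $\Gamma$ acts on~$\hat G$ through the cyclotomic embedding $\Gamma\hookrightarrow(\Z/e\Z)^\times=\Aut(\mmu_e)$ and permutes the canonical basis of the permutation lattice $\Z[\hat G]$ accordingly. (Characteristic~$0$ enters here to guarantee that $L/k$ is separable and that the invariant-theoretic and resolution-of-singularities tools below apply.)

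Next I would apply the criterion of Colliot-Thélène--Sansuc: a $k$\nobreakdash-torus is retract rational if and only if the flasque class of its character lattice is \emph{invertible}, i.e.\ is represented by a direct summand of a permutation $\Gamma$\nobreakdash-lattice. The task therefore becomes the purely lattice-theoretic statement that $[M]^{\mathrm{fl}}$ is invertible. It is worth noting that this is strictly weaker than the condition (stably permutation) needed for stable rationality, which is what allows the present theorem to hold in cases where Lenstra's Theorem~\ref{th:lenstra} shows stable rationality to fail. To reduce to prime power order, observe that for $G=G_1\times G_2$ one has $\A^{n_1}_k/G_1\times\A^{n_2}_k/G_2=(V_1\oplus V_2)/G$ for faithful representations $V_i$ of~$G_i$, so by Remark~\ref{rmk:noname} this product is stably birational to $\A^n_k/G$; as a product of retract rational varieties is visibly retract rational, it suffices to treat each $p$\nobreakdash-primary component~$G_p$ separately. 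For such a component the Galois action factors through $\Gamma_p=\Gal(k(\zeta_{p^{s}})/k)\hookrightarrow(\Z/p^{s}\Z)^\times$, where $p^{s}$ is the exponent of~$G_p$.

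The crux is now a statement about cyclic groups. For an odd prime~$p$ the group $(\Z/p^{s}\Z)^\times$ is cyclic, so $\Gamma_p$ is automatically cyclic; for $p=2$ the hypothesis that $k(\zeta_{2^r})/k$ is cyclic says \emph{precisely} that $\Gamma_2$ is cyclic. (This is the condition that fails for $G=\Z/8\Z$ over~$\Q$, where $(\Z/8\Z)^\times$ is not cyclic, consistently with the failure of retract rationality there.) I would then invoke the theorem of Endo--Miyata that over a cyclic group every flasque lattice is invertible; applied to $\Gamma_p$ it shows at once that the flasque class of the character lattice of the torus attached to~$G_p$ is invertible, whence that torus, and therefore $\A^{n_p}_k/G_p$, is retract rational. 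Reassembling the $p$\nobreakdash-primary pieces gives the theorem.

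The main obstacle I expect lies in the $p=2$ component: this is the only place where the hypothesis is genuinely used, and it is where the Endo--Miyata invertibility criterion for cyclic groups must be brought to bear on the concrete lattice $\Ker\big(\Z[\hat{G_2}]\to\hat{G_2}\big)$, the odd-$p$ parts being handled uniformly and for free. A secondary technical point to verify carefully is the identification of the birational model $\Gm^n/G$ with the torus~$T$ and of its character lattice with~$M$ (including $\Gamma$\nobreakdash-equivariance of the displayed sequence), together with the—routine but necessary—fact that a product of retract rational varieties is again retract rational.
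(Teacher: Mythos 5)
Your argument is essentially correct, but note that the paper itself gives no proof of Theorem~\ref{thm:saltmangeneric}: it is stated with a bare citation to Saltman, so there is no in-text argument to compare against. Your route is the standard torus-theoretic one: reduce by the no-name lemma and the invariance of retract rationality to the regular representation, identify the birational model over $L=k(\zeta_e)$ with the $k$\nobreakdash-torus whose character lattice is $M=\Ker\big(\Z[\hat G]\to\hat G\big)$ with $\Gamma=\Gal(L/k)$ acting cyclotomically (this is Lenstra's setup), invoke the Colliot-Th\'el\`ene--Sansuc/Saltman criterion that a torus is retract rational if and only if the flasque class of its character lattice is invertible, split $G$ into $p$\nobreakdash-primary parts so that each relevant Galois group $\Gamma_p\subseteq(\Z/p^{s_p}\Z)^\times$ is cyclic (automatically for odd~$p$, by hypothesis for $p=2$), and conclude by Endo--Miyata. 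This is a complete and correct plan; in fact the last step needs no computation with the specific lattice~$M$, since over a group with cyclic Sylow subgroups \emph{every} flasque lattice is invertible, so you are slightly overestimating the remaining work there. Saltman's own proof in the cited reference proceeds differently in flavour: he works with the equivalent notion of a \emph{generic Galois extension} for~$G$ over~$k$ (shown in his companion paper to be equivalent to retract rationality of $\A^n_k/G$), constructs such extensions explicitly for $\Z/p^s$ with $p$ odd and for $\Z/2^r$ under the cyclicity hypothesis via Kummer-theoretic descent from $k(\zeta_{p^s})$, and uses closure of this class of groups under direct products. Your approach imports heavier general machinery but is cleaner and makes transparent both why the result is weaker than stable rationality (invertible versus stably permutation, cf.\ Theorem~\ref{th:lenstra}) and why $\Z/8\Z$ over~$\Q$ fails. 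One small point to tighten: the open subset $\Gm^n\subseteq\A^n_k$ in the \emph{permutation} coordinates is not acted on by~$G$ through the torus structure, so its quotient is not literally a torus; you must first pass over~$L$ to the eigenbasis of~$\hat G$ and take the coordinate torus there, which changes nothing birationally. You flag this yourself, and it is indeed routine.
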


Theorem~\ref{thm:saltmangeneric} can in particular be applied to all finite abelian groups of odd order.
Thus, retract rationality is weaker than rationality
(compare with Theorem~\ref{th:plans}).
Nevertheless, as far as the applications to the inverse
Galois problem are concerned, it is just as good:
indeed,
as Saltman observed,
smooth retract rational varieties
over number fields are easily seen to satisfy weak approximation, so that
Proposition~\ref{prop:versalrationalimpliesgrunwald} can be applied with $S_0=\emptyset$ whenever the variety $Y/G$
is retract rational and the torsor $Y\to Y/G$ is weakly versal.

Combining this observation with Theorem~\ref{thm:saltmangeneric}
and with
Proposition~\ref{prop:versalrationalimpliesgrunwald},
we deduce, in view of Example~\ref{ex:noetherisversal},
that
 Grunwald's problem has a positive answer over any number field~$k$, without excluding any place,
for all abelian groups~$G$ satisfying the assumption of Theorem~\ref{thm:saltmangeneric}---a conclusion
that already resulted from the Grunwald--Wang theorem, but whose proof now fits into the framework
of Hilbert's and Noether's general strategy, even though, according to Theorem~\ref{th:plans},
 Noether's problem itself has a negative answer
for many of these groups~$G$ (perhaps even for ``almost all'' of them?).

Conversely, by the same token, Wang's negative answer to Grunwald's problem 
(see Proposition~\ref{prop:wang}) implies that
when $G=\Z/8\Z$ and $k=\Q$, the quotient $\A^n_k/G$ fails not only to be stably rational
but also to be retract rational.  Similarly, the negative answers to
Noether's problem over~$\C$ discussed in~\textsection\ref{subsubsec:counterexamplesoverc}
are in fact  counterexamples to the retract rationality of the quotients $\A^n_\C/G$ in question.
Thus, despite the wider scope of applicability of the Hilbert--Noether method when rationality
is replaced with the weaker notion of retract rationality, further ideas are necessary to address arbitrary finite groups.

\section{Regular inverse Galois problem}
\label{sec:rig}

\subsection{Statement}
\label{subsec:rigstatement}

We saw, in~\textsection\ref{sec:1},
that Noether's problem does not always admit a positive answer,
i.e.\ the quotient variety $\A^n_k/G$ can fail to be rational, or stably rational, or even
retract rational.  A simple way out, if one still wants to apply Hilbert's irreducibility theorem,
is to look
for rational \emph{subvarieties} of~$\A^n_k/G$, in particular rational curves.  To take advantage of the
geometry of the situation, it is natural to focus on those rational
curves whose inverse image in~$\A^n_k$ is geometrically irreducible
and meets the locus $Y \subset \A^n_k$ on which~$G$ acts freely.  By the versality of the torsor $Y \to Y/G$
(Example~\ref{ex:noetherisversal}),
finding such curves
is the same as solving the \emph{regular inverse Galois problem} (when~$k$ is perfect):

\begin{problem}[regular inverse Galois]
\label{pb:rig}
Let~$k$ be a field.
Let~$G$ be a finite group.
Do there exist a smooth, projective, geometrically irreducible curve~$C$ over~$k$
and a finite morphism $\pi:C\to \P^1_k$ such that the corresponding extension of function fields $k(C)/k(t)$
is Galois with $\Gal(k(C)/k(t))\simeq G$?
\end{problem}

When~$k$ is a perfect field, this is equivalent to asking for the existence of
a field extension of~$k(t)$ with Galois group~$G$ 
in which~$k$ is algebraically closed,
i.e.\ a field extension that is regular over~$k$.  Following standard practice, we shall refer to such a field extension
as a \emph{regular Galois extension of~$k(t)$ with group~$G$}.

When~$k$ is a number field,
a positive answer to Problem~\ref{pb:rig} for~$k$ and~$G$ implies a positive answer to the inverse Galois
problem for~$k$ and~$G$,
by Hilbert's irreducibility theorem.
Over an arbitrary field and for an arbitrary finite group,
the inverse Galois problem and Noether's problem  both have negative answers in general, as we have seen;
in contrast,
Problem~\ref{pb:rig} might well always have a positive answer.

\begin{rmk}
\label{rmk:noetherimpliesrig}
It follows from the Bertini theorem that if~$k$ is infinite and perfect,
a positive answer to Noether's problem for~$k$ and~$G$ implies a positive answer to the regular inverse Galois
problem for~$k$ and~$G$
(see \cite[Théorème~6.3]{jouanoloubertini}).
In fact, for such~$k$, one can check that the retract rationality of $\A^n_k/G$ already
implies a positive answer to the regular inverse Galois
problem for~$k$ and~$G$.
\end{rmk}

\subsection{Riemann's existence theorem}

A solution to the regular inverse Galois problem over~$k$ gives rise, by scalar extension, to a solution
over any field extension of~$k$.  Thus, in order to find a solution over~$\Q$, it is necessary to first
solve the problem over~$\C$ and over~$\bar \Q$.
The key tool for this is Riemann's existence theorem, which allows one
to transform this algebraic question into a purely topological one.

\begin{thm}[Riemann's existence theorem]
\label{th:ret}
Let~$k$ be an algebraically closed subfield of~$\C$.
Let~$X$ be a variety over~$k$.  The natural functor
\begin{align*}
\Big(\text{étale coverings of~$X$}\Big)
\to \Big(\text{finite topological coverings of $X(\C)$}\Big)
\end{align*}
that maps $Y \to X$ to $Y(\C) \to X(\C)$ is an equivalence of categories.
\end{thm}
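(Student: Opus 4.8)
The plan is to establish the equivalence in two stages: first the full faithfulness, then the essential surjectivity, since these correspond to two rather different kinds of input (algebraic comparison versus topological construction). The core analytic ingredient is that for a smooth complex variety, the analytification functor is fully faithful and that a finite unramified holomorphic map of Riemann surfaces (or complex manifolds) is the same as a topological covering; the harder direction — essential surjectivity — is where one must actually produce an algebraic structure out of purely topological data, and this is the crux of Riemann's existence theorem.

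\emph{Full faithfulness.} First I would reduce to the case where $X$ is connected (hence irreducible, as it is a variety over an algebraically closed field), treating each connected component separately, and then to the case where $X$ is normal by passing to an open dense smooth locus, whose complement has codimension $\geq 1$ and so does not affect the category of étale coverings up to the relevant comparison. For a \emph{given} finite étale $X$-morphism $Y \to Y'$ of étale coverings, the assertion of full faithfulness is that $\Hom_X(Y,Y') \to \Hom_{X(\C)}(Y(\C),Y'(\C))$ is a bijection. Faithfulness is immediate because $Y(\C)$ is Zariski-dense in $Y$ (for $Y$ of finite type over $\C$ a complex point set is dense). For surjectivity one uses that a continuous map respecting the coverings is automatically holomorphic, and that a holomorphic map between the analytifications of finite-type $\C$-schemes which is a morphism of finite étale covers of $X$ is algebraic: the graph is a complex-analytic subvariety of $Y \times_X Y'$ that is finite over $Y$, hence by Chow's theorem (or, for affine patches, by a direct GAGA-type argument on the finite affine $X$-algebra structure) comes from an algebraic correspondence.

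\emph{Essential surjectivity.} This is the main obstacle, and I would isolate it as follows. Given a finite topological covering $T \to X(\C)$, one must upgrade $T$ to an étale cover of $X$. The space $T$ carries a unique structure of complex manifold making $T \to X(\C)$ a local biholomorphism (pull back charts). The problem is then to algebraise the complex manifold $T$, together with its finite map to $X(\C)$, into a finite étale $X$-scheme. The standard route, which I would follow, is to \emph{first treat the case where $X$ is a smooth projective curve}: there one invokes the classical Riemann existence theorem proper, namely that a compact Riemann surface is projective algebraic and a finite holomorphic map of compact Riemann surfaces is a morphism of algebraic curves. For a general $X$ one reduces to the curve case by a Lefschetz-type / fibration argument: one can compactify and resolve, fiber $X$ over a base by curves, and use that the comparison is already known fibrewise together with descent; alternatively, and more cleanly, one cites the fact that finite étale covers form a category equivalent to the category of finite $\pi_1$-sets on both the algebraic and topological sides, and that the algebraic and topological fundamental groups have the same finite quotients (the profinite étale $\pi_1$ being the profinite completion of the topological $\pi_1$). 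The comparison of fundamental groups is precisely the deep content, and it is here that one genuinely needs either the curve case bootstrapped through fibrations or a direct appeal to the comparison theorem in \cite{sga1}.

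\emph{Where the difficulty concentrates.} The full faithfulness is essentially formal once GAGA-type density and holomorphy-of-continuous-covering-maps are in hand, so I expect the real work, and the genuine theorem, to live entirely in essential surjectivity, specifically in the passage from the topological $\pi_1$ to the étale $\pi_1$. I would therefore organise the write-up so that this step is either reduced to the projective-curve case (and then to the algebraisation of compact Riemann surfaces, which is classical) or cited wholesale from \cite{sga1} for general $X$; attempting to prove the general-$X$ essential surjectivity from scratch would require the full machinery of specialisation of the fundamental group and is not something to grind out here.
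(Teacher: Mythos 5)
The paper does not prove this statement: it quotes it wholesale from \cite{sga1} (Exp.~XII, Th\'eor\`eme~5.1 for $k=\C$, then Exp.~XIII, Corollaire~3.5 to pass to an arbitrary algebraically closed subfield of~$\C$), so the only fair comparison is between your sketch and the argument of \cite{sga1}. Your division into full faithfulness and essential surjectivity, and your identification of the latter as the genuine content, are correct, and your mechanism for full faithfulness (a continuous covering map is automatically holomorphic, and its graph is then algebraic by a GAGA-type statement for finite morphisms) is essentially right. But two of your steps would fail as written. First, you cannot reduce to the normal case by discarding a closed subset of codimension~$\geq 1$: removing a divisor changes the category of \'etale coverings (coverings ramified along the divisor appear), and even for full faithfulness alone, extending a morphism $Y_U \to Y'_U$ obtained over a dense open $U$ back to $Y \to Y'$ requires $Y$ normal, which is not given; the detour is in any case unnecessary. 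Second, and more seriously, your ``cleaner alternative'' for essential surjectivity --- citing that the \'etale fundamental group is the profinite completion of the topological one --- is circular: that comparison is equivalent to the equivalence of categories you are trying to prove.

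You also omit a step the statement genuinely requires: the functor starts from \'etale coverings of~$X$ over~$k$, not over~$\C$, so even granting the theorem for~$X_\C$ you must show that base change from~$k$ to~$\C$ induces an equivalence on categories of \'etale coverings. This is a separate, non-formal theorem (\cite[Exp.~XIII, Corollaire~3.5]{sga1}) and does not follow from anything in your sketch. Finally, the route actually taken in \cite{sga1} for essential surjectivity over~$\C$ is not the fibration-by-curves argument you gesture at: one compactifies~$X$, extends the finite topological covering to a finite \emph{branched} analytic covering of the compactification (the Grauert--Remmert extension theorem), and then algebraises the resulting coherent analytic sheaf of algebras by Serre's GAGA for projective varieties. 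If you want a self-contained write-up rather than a citation, that is the chain of results to assemble; the compact Riemann surface case you propose to bootstrap from is a special case of it, not a substitute for it.
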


An \emph{étale covering} of~$X$ is a variety over~$k$ endowed with a finite étale morphism to~$X$.
A~topological covering is \emph{finite} if its fibres are finite.  Theorem~\ref{th:ret} in the above
formulation
is proved in \cite{sga1}.
To be precise, the case where $k=\C$
is \cite[Exp.~XII, Théorème~5.1]{sga1}
and builds on Grothendieck's reworking of Serre's GAGA
theorems; the case of an arbitrary algebraically closed subfield of~$\C$
then results from it by \cite[Exp.~XIII, Corollaire~3.5]{sga1}.

\begin{cor}
\label{cor:ret}
Let~$k$ be an algebraically closed subfield of~$\C$.
Let~$X$ be a connected variety over~$k$.
Let $x \in X(k)$.  For any finite group~$G$,
isomorphism classes of $G$\nobreakdash-torsors (resp.\ of connected $G$\nobreakdash-torsors)
$Y\to X$
endowed with a lift $y\in Y(k)$ of~$x$
are canonically in one-to-one correspondence with homomorphisms $\pi_1(X(\C),x) \to G$
(resp.\ with surjective homomorphisms
 $\pi_1(X(\C),x) \twoheadrightarrow G$).
Changing the choice of~$y$ amounts to conjugating the homomorphism
by an element of~$G$.
\end{cor}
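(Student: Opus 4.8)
The plan is to reduce the statement to the classical topological classification of principal $G$\nobreakdash-covers by their monodromy, using Riemann's existence theorem to pass from the algebraic to the analytic setting. First I would record that a $G$\nobreakdash-torsor over~$X$ is precisely an étale covering $Y \to X$ equipped with a $G$\nobreakdash-action that is free and for which $\pi$ induces $Y/G \isoto X$ (the characterisation recalled just after Definition~\ref{def:torsor}). The functor of Theorem~\ref{th:ret} is an equivalence of categories, hence fully faithful; a $G$\nobreakdash-action on the covering $Y \to X$, being a homomorphism from~$G$ into the automorphism group of that object, therefore transports to a $G$\nobreakdash-action on the finite topological covering $Y(\C) \to X(\C)$, and conversely. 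Since an equivalence preserves those quotients that exist, the conditions ``$G$ acts freely'' and ``$Y/G \isoto X$'' translate into ``$G$ acts freely on $Y(\C)$'' and ``$Y(\C)/G = X(\C)$''. Thus $G$\nobreakdash-torsors over~$X$ correspond, compatibly with isomorphisms, to principal $G$\nobreakdash-covers (topological $G$\nobreakdash-torsors) of $X(\C)$.

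Next I would match base points. Because $x \in X(k)$ and $k$ is algebraically closed, the scheme-theoretic fibre $\pi^{-1}(x)$ of a $G$\nobreakdash-torsor is a disjoint union of $|G|$ copies of $\Spec(k)$; hence every point of $Y(\C)$ lying over~$x$ is already a $k$\nobreakdash-point, and the lifts $y \in Y(k)$ of~$x$ are exactly the lifts of~$x$ in the fibre of $Y(\C) \to X(\C)$. Under the dictionary of the previous step, a $G$\nobreakdash-torsor $Y \to X$ with a chosen lift~$y$ thus becomes a principal $G$\nobreakdash-cover of $X(\C)$ with a chosen point above~$x$.

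It then remains to invoke covering space theory. The space $X(\C)$ is connected (as $X$ is connected over the algebraically closed field~$k$ it is geometrically connected, and the analytification of a connected complex variety is connected), and it is locally path-connected and semilocally simply connected. For such a space one has the monodromy construction: given a principal $G$\nobreakdash-cover with a marked point~$y$ above~$x$, each loop $\gamma \in \pi_1(X(\C),x)$ lifts uniquely to a path starting at~$y$, whose endpoint equals $g\cdot y$ for a unique $g \in G$ because $G$ acts simply transitively on the fibre; sending $\gamma$ to this~$g$ yields a homomorphism $\rho \colon \pi_1(X(\C),x) \to G$. An inverse is provided by the associated-bundle construction $\widetilde{X(\C)} \times^{\pi_1} G$, where $\widetilde{X(\C)}$ is the universal cover and $\pi_1$ acts through~$\rho$, giving a principal $G$\nobreakdash-cover with a canonical marked point. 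The total space is connected if and only if the monodromy acts transitively on the fibre, i.e.\ if and only if $\rho$ is surjective, which yields the assertion about connected torsors; and replacing~$y$ by $g\cdot y$ replaces~$\rho$ by its conjugate $g\rho g^{-1}$, which is the final claim.

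The only genuinely delicate point is the first step: Theorem~\ref{th:ret} is stated for plain coverings, so I must check that the equivalence is compatible with the extra $G$\nobreakdash-torsor structure — that the $G$\nobreakdash-action, the freeness, and the isomorphism $Y/G \isoto X$ all pass through it. This is exactly where functoriality and the preservation of quotients by an equivalence of categories do the work. Once the correspondence between algebraic and topological principal $G$\nobreakdash-covers is in place, the remainder is the standard $\pi_1$\nobreakdash-classification of such covers.
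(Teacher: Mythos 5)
Your proposal is correct and follows essentially the same route as the paper: Riemann's existence theorem to pass to the topological setting, then the classification of coverings of $X(\C)$ via the monodromy action of $\pi_1(X(\C),x)$, with the marked point turning the simply transitive $G$\nobreakdash-action on the fibre into a homomorphism $\pi_1(X(\C),x)\to G$ well defined up to conjugation. The paper's proof is a condensed version of exactly this argument (phrased via the equivalence between topological coverings and $\pi_1$\nobreakdash-sets), so no further comparison is needed.
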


\begin{proof}
Indeed, this
 follows from Theorem~\ref{th:ret} combined with the well-known
equivalence of categories between the category of  topological
coverings of~$X(\C)$ and the category of sets endowed with an action of~$\pi_1(X(\C),x)$
(see \cite[Theorem~2.3.4]{szamuelygaloisgroups}).
The homomorphism $\pi_1(X(\C),x) \to G$ corresponding to $Y \to X$
sends $\gamma \in \pi_1(X(\C),x)$ to the unique $g \in G$ such that $\gamma y = yg$,
where we are taking the convention that the action of~$G$ on~$Y$ is a right action
and that the monodromy action of $\pi_1(X(\C),x)$ on the fibre of $Y(\C)\to X(\C)$ above~$x$
is a left action.
\end{proof}

\begin{rmk}[reminder on monodromy groups and Galois groups]
\label{rk:ret}
Let~$k$ and~$X$ be as in Corollary~\ref{cor:ret}.
Let $x \in X(\C)$.
The \emph{monodromy group}~$M$ of
an étale covering $Y\to X$ is, by definition,
 the largest quotient of~$\pi_1(X(\C),x)$ through which the monodromy action of this group on
the fibre of $Y(\C) \to X(\C)$ above~$x$ factors.
Assume that~$X$ is normal and irreducible
and let $Y' \to Y \to X$
be a tower of irreducible étale coverings
such that the field extension $k(Y')/k(X)$ is a Galois
closure of $k(Y)/k(X)$.
Let $G=\Gal(k(Y')/k(X))$.
Then $Y' \to X$ is the normalisation of~$X$ in~$k(Y')$; as such, it receives an action of~$G$,
with respect to which it is a $G$\nobreakdash-torsor;
in addition, the surjective homomorphism $\pi_1(X(\C),x)\twoheadrightarrow G$
that Corollary~\ref{cor:ret}
associates with $Y' \to X$ and with the choice of a lift $y' \in Y'(k)$ of~$x$
induces an isomorphism $M \isoto G$.
(Changing the choice of the lift~$y'$ amounts to composing this isomorphism with an inner automorphism.)
Thus, computing the Galois group of the Galois closure of the field extension $k(Y)/k(X)$
is tantamount to computing a monodromy group in the topological setting.
\end{rmk}

\subsection{Classifying Galois covers of the projective line over~\texorpdfstring{$\C$}{𝐂} or over~\texorpdfstring{$\bar \Q$}{Q̄}}
\label{subsec:classifyc}

Let us apply Theorem~\ref{th:ret} to the open subsets of the projective line.
The fundamental group
of the complement of finitely many points in~$\P^1(\C)$ is easy to describe:

\begin{prop}
\label{prop:descpi1}
Let $X \subseteq \P^1_\C$ be a dense open subset.
Write $\P^1_\C \setminus X = \{b_1,\dots,b_r\}$.
Let $x \in X(\C)$.  The group $\pi_1(X(\C),x)$ admits a presentation with~$r$
generators $\gamma_1,\dots,\gamma_r$ and a unique relation $\gamma_1 \cdots \gamma_r=1$,
such that~$\gamma_i$ belongs, for every $i\in\{1,\dots,r\}$,
to the conjugacy class in~$\pi_1(X(\C),x)$
of a local counterclockwise loop around~$b_i$.
\end{prop}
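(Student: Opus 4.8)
The plan is to compute $\pi_1(X(\C),x)$ by realising $X(\C)$ as a punctured Riemann sphere and then reducing to the elementary computation for the punctured plane. Since $X(\C)=\P^1(\C)\setminus\{b_1,\dots,b_r\}$ and $\P^1(\C)$ is homeomorphic to the $2$-sphere $S^2$, I would first apply an automorphism of $\P^1_\C$ carrying $b_r$ to the point at infinity. This does not alter the homeomorphism type, and it identifies $X(\C)$ with $\C\setminus\{b_1,\dots,b_{r-1}\}$, the complex plane with $r-1$ punctures, while allowing me to keep track of the loops around each $b_i$ (the remaining puncture $b_r$ now sitting at $\infty$).

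First I would establish freeness. The punctured plane $\C\setminus\{b_1,\dots,b_{r-1}\}$ deformation retracts onto a wedge of $r-1$ circles---for instance, after moving the punctures onto a real line and retracting radially onto a bouquet with one loop per puncture. By the standard computation of the fundamental group of a wedge of circles, $\pi_1(X(\C),x)$ is then free on $r-1$ generators, which may be taken to be loops $\gamma_1,\dots,\gamma_{r-1}$ based at $x$, each encircling the corresponding $b_i$ once counterclockwise via a chosen connecting arc from $x$.

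Next I would produce the relation together with the remaining generator $\gamma_r$. A large counterclockwise circle $\delta$ enclosing all of $b_1,\dots,b_{r-1}$ is, inside the punctured plane, homotopic to the ordered product $\gamma_1\cdots\gamma_{r-1}$, provided the connecting arcs leave $x$ in the cyclic order $1,2,\dots,r-1$. Viewed back on $S^2$, the loop $\delta$ is a small loop around the puncture $b_r=\infty$, traversed clockwise as seen from $\infty$; hence its inverse $\gamma_r:=\delta^{-1}$ is a counterclockwise loop around $b_r$, and the homotopy $\delta\simeq\gamma_1\cdots\gamma_{r-1}$ becomes $\gamma_1\cdots\gamma_r=1$. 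Adjoining $\gamma_r$ as a redundant generator subject to this single relation yields the asserted presentation: eliminating $\gamma_r=(\gamma_1\cdots\gamma_{r-1})^{-1}$ recovers the free group of rank $r-1$, which confirms that this relation is the only one.

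The main obstacle is the careful bookkeeping of the system of arcs joining $x$ to the punctures. One must choose them disjoint and in the correct cyclic order so that each $\gamma_i$ genuinely lies in the conjugacy class of a local counterclockwise loop around $b_i$, and so that the large loop $\delta$ equals the ordered product $\gamma_1\cdots\gamma_{r-1}$ on the nose, with no spurious conjugating factors. This is precisely the construction of a standard generating system; once it is set up, both the shape of the generators and the single product relation follow, and the symmetry of the statement in $b_1,\dots,b_r$ is recovered by observing that the distinguished role played by $b_r=\infty$ was merely an artefact of the choice of coordinates.
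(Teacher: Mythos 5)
Your argument is correct, and it is the standard one: the paper itself gives no proof of this proposition, dismissing it as ``elementary and well-known'', so there is nothing to compare against beyond noting that your route (send $b_r$ to $\infty$, retract the $(r-1)$-punctured plane onto a wedge of circles to get freeness on $\gamma_1,\dots,\gamma_{r-1}$, then adjoin $\gamma_r=\delta^{-1}$ where $\delta$ is a large circle homotopic to $\gamma_1\cdots\gamma_{r-1}$) is exactly the computation the author has in mind. You correctly handle the one point that genuinely requires care, namely that a large counterclockwise circle in the affine chart becomes a \emph{clockwise} loop around $\infty$ in the coordinate $w=1/z$, so that $\gamma_r:=\delta^{-1}$ is indeed in the conjugacy class of a local counterclockwise loop around $b_r$; and your acknowledgement that the remaining work is the choice of disjoint connecting arcs in the correct cyclic order (a standard generating system, or Hurwitz system) is an accurate assessment of where the bookkeeping lies.
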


What the last sentence of Proposition~\ref{prop:descpi1} means is this:
if~$N_i$ denotes
a small enough open neighbourhood of~$b_i$ in~$\P^1(\C)$ that is biholomorphic to the unit disc, then
a loop contained in $N_i \setminus \{b_i\}$ and going once around~$b_i$ in the counterclockwise direction
gives rise, after choosing a path from~$x$ to a point of this loop, to an element of $\pi_1(X(\C),x)$
whose conjugacy class does not depend on the chosen path.
The content of Proposition~\ref{prop:descpi1} is that these paths can be chosen in such a way
that the~$\gamma_i$ generate~$\pi_1(X(\C),x)$ and satisfy the relation $\gamma_1\cdots\gamma_r=1$.
This is elementary and well-known.

Using Proposition~\ref{prop:descpi1}, we can draw the following corollary
from Riemann's existence theorem.
Corollary~\ref{cor:retgcovers}
completely describes $G$\nobreakdash-torsors over dense open subsets
of the projective line over algebraically closed subfields of~$\C$,
and implies a positive solution
to the regular inverse Galois problem over such fields.
(The notation $\ni^\ast_r(G)$ appearing in its statement
refers to the name Nielsen,
see \cite[\textsection9.2]{volklein}, \cite[\textsection3.1]{romagnywewers}.)

\begin{cor}
\label{cor:retgcovers}
Let $k$ be an algebraically closed subfield of~$\C$.
Let $X \subseteq \P^1_k$ be a dense open subset.
Write $\P^1_\C \setminus X = \{b_1,\dots,b_r\}$.
Let~$G$ be a finite group.
Consider the set
of $r$\nobreakdash-tuples $(g_1,\dots,g_r) \in G^r$ such that
$g_1\cdots g_r=1$ and that $g_1,\dots,g_r$ generate~$G$.
Let $\ni^\ast_r(G)$ denote the quotient of this set
by the action of~$G$ by simultaneous conjugation.
The set of isomorphism classes of irreducible $G$\nobreakdash-torsors
over~$X$ is in bijection with~$\ni^\ast_r(G)$ (through a bijection that is canonically determined
once a presentation of~$\pi_1(X(\C),x)$
as in Proposition~\ref{prop:descpi1} is fixed).
\end{cor}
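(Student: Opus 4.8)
The plan is to realise the asserted bijection as a composite of two correspondences already available in the excerpt: the torsor-to-representation dictionary of Corollary~\ref{cor:ret}, and the explicit presentation of the fundamental group furnished by Proposition~\ref{prop:descpi1}. A preliminary reduction is needed: since~$X$ is a dense open subset of~$\P^1_k$ it is a smooth curve, so every étale covering of~$X$ is smooth and its connected components are irreducible; consequently a $G$\nobreakdash-torsor over~$X$ is irreducible if and only if it is connected. Thus it suffices to set up a bijection between isomorphism classes of \emph{connected} $G$\nobreakdash-torsors over~$X$ and~$\ni^\ast_r(G)$.

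Next I would fix a $k$\nobreakdash-point $x \in X(k)$ (which exists as~$k$ is algebraically closed and~$X$ is dense open in~$\P^1_k$) and a presentation of $\pi_1(X(\C),x)$ as in Proposition~\ref{prop:descpi1}, with generators $\gamma_1,\dots,\gamma_r$ subject to the sole relation $\gamma_1\cdots\gamma_r=1$. By Corollary~\ref{cor:ret}, connected $G$\nobreakdash-torsors $Y\to X$ endowed with a lift $y\in Y(k)$ of~$x$ are in canonical bijection with surjective homomorphisms $\pi_1(X(\C),x)\surj G$. Forgetting the lift precisely amounts to quotienting by simultaneous conjugation: the fibre of $Y(k)\to X(k)$ over~$x$ is a simply transitive $G$\nobreakdash-set, so the possible lifts form a $G$\nobreakdash-torsor, and Corollary~\ref{cor:ret} records that changing~$y$ conjugates the associated homomorphism by an element of~$G$. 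Hence isomorphism classes of connected $G$\nobreakdash-torsors over~$X$ correspond bijectively to $\Hom^{\mathrm{surj}}(\pi_1(X(\C),x),G)/\Inn(G)$.

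Finally I would translate this last set through the presentation. A homomorphism $\phi\colon\pi_1(X(\C),x)\to G$ is determined by the tuple $(g_1,\dots,g_r)=(\phi(\gamma_1),\dots,\phi(\gamma_r))$, and because the~$\gamma_i$ generate the group with the unique relation $\gamma_1\cdots\gamma_r=1$, such tuples are in bijection with homomorphisms exactly when $g_1\cdots g_r=1$; surjectivity of~$\phi$ corresponds to the~$g_i$ generating~$G$, and conjugating~$\phi$ by $g\in G$ corresponds to replacing $(g_1,\dots,g_r)$ by $(g\,g_1\,g^{-1},\dots,g\,g_r\,g^{-1})$. The composite therefore identifies isomorphism classes of irreducible $G$\nobreakdash-torsors over~$X$ with~$\ni^\ast_r(G)$, and the bijection is canonical once the presentation is fixed, since the ordered choice of the~$\gamma_i$ fixes both the indexing of the coordinates~$g_i$ and the matching with the punctures~$b_i$.

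The argument is essentially an assembly of prior results, so there is no genuinely hard analytic or algebraic input; the one point demanding care is the base-point bookkeeping in the middle step. One must track the conventions of Corollary~\ref{cor:ret} (the monodromy of $\pi_1$ acting on the left, the group~$G$ acting on the right) faithfully, so as to be sure that ``changing the lift conjugates the homomorphism'' corresponds under the presentation to ordinary \emph{simultaneous} conjugation of the tuple $(g_1,\dots,g_r)$, and that forgetting the lift quotients by the full inner-automorphism action rather than a larger or smaller group. Once this is verified the three correspondences compose without further obstruction.
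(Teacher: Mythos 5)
Your argument is correct and is essentially the paper's own proof: the paper likewise composes Corollary~\ref{cor:ret} (torsors with a marked lift of the base point correspond to homomorphisms, changing the lift conjugates) with the presentation of Proposition~\ref{prop:descpi1}. The extra care you take over connected versus irreducible and over the base-point bookkeeping is implicit in the paper but entirely consistent with it.
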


\begin{proof}
By Corollary~\ref{cor:ret}, isomorphism classes of irreducible $G$\nobreakdash-torsors
over~$X$ are canonically in one-to-one correspondence
with conjugacy classes of surjections $\pi_1(X(\C),x)\twoheadrightarrow G$.
Apply Proposition~\ref{prop:descpi1} to conclude.
\end{proof}

\begin{cor}
For any finite group~$G$,
the regular inverse Galois problem admits a positive answer over~$\bar \Q$.
\end{cor}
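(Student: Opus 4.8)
The plan is to read the statement directly off Corollary~\ref{cor:retgcovers}, whose entire purpose is to reduce the existence of an irreducible $G$\nobreakdash-torsor over an open subset of~$\P^1_{\bar\Q}$ to the purely group-theoretic question of whether $\ni^\ast_r(G)$ is nonempty for some~$r$. So the first and essentially only substantive step is to produce, for some $r\geq 1$, a tuple $(g_1,\dots,g_r)\in G^r$ with $g_1\cdots g_r=1$ whose entries generate~$G$.

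This is elementary. I would choose any generating set $h_1,\dots,h_s$ of~$G$ (for instance the set of all its elements, so that $s=\lvert G\rvert$), put $r=s+1$, and set $g_i=h_i$ for $1\leq i\leq s$ together with $g_{s+1}=(h_1\cdots h_s)^{-1}$. Then $g_1\cdots g_r=1$ by construction, while $g_1,\dots,g_r$ generate~$G$ because $h_1,\dots,h_s$ already do. Hence $\ni^\ast_r(G)\neq\emptyset$. Fixing any $r$ distinct points $b_1,\dots,b_r\in\P^1(\bar\Q)$ and setting $X=\P^1_{\bar\Q}\setminus\{b_1,\dots,b_r\}$, Corollary~\ref{cor:retgcovers} then furnishes an irreducible $G$\nobreakdash-torsor $\pi_0:Y\to X$.

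Because~$Y$ is irreducible and $\pi_0$ identifies~$X$ with $Y/G$, the function field $\bar\Q(Y)$ is a field carrying a $G$\nobreakdash-action whose fixed field is $\bar\Q(X)=\bar\Q(t)$; since $\pi_0$ has degree $\lvert G\rvert$ and the action is free, $\bar\Q(Y)/\bar\Q(t)$ is a Galois extension with group~$G$. To finish, I would pass from this open torsor to a smooth projective model by letting~$C$ be the normalisation of $\P^1_{\bar\Q}$ in $\bar\Q(Y)$: this is a normal, hence (as $\bar\Q$ is perfect) smooth, projective curve over~$\bar\Q$, geometrically irreducible because $\bar\Q$ is algebraically closed, and it comes with a finite morphism $\pi:C\to\P^1_{\bar\Q}$ extending~$\pi_0$. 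As~$C$ and~$Y$ share the function field $\bar\Q(Y)$, the extension $\bar\Q(C)/\bar\Q(t)$ is exactly the Galois extension with group~$G$ found above, so $(C,\pi)$ solves Problem~\ref{pb:rig} over~$\bar\Q$.

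I do not expect a genuine obstacle here: given the machinery already assembled, the result is a formal consequence. The only points deserving attention are the construction of the generating tuple (handled above in one line) and the verification that the normalisation~$C$ is simultaneously smooth, projective, and geometrically irreducible with the correct function-field extension—each of which follows from standard facts about curves over the perfect, algebraically closed field~$\bar\Q$.
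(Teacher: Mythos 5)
Your argument is correct and coincides with the paper's own proof: both produce a generating $r$-tuple with product $1$ to show $\ni^\ast_r(G)\neq\emptyset$, invoke Corollary~\ref{cor:retgcovers} to obtain an irreducible $G$-torsor over the complement of $r$ points in $\P^1_{\bar\Q}$, and then pass to the normalisation of $\P^1_{\bar\Q}$ in its function field to get the smooth projective model. Your explicit construction of the tuple just spells out the paper's remark that $r$ need only be large enough for $G$ to be generated by $r-1$ elements.
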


\begin{proof}
Let~$r$ be an integer, large enough that~$G$ can be generated by~$r-1$ elements.
Pick~$r$ points of~$\P^1(\bar \Q)$
and let $X \subset  \P^1_{\bar \Q}$ denote their complement.
As $\ni^\ast_{r}(G)\neq\emptyset$,
 Corollary~\ref{cor:retgcovers}
ensures the existence of an irreducible $G$\nobreakdash-torsor $p:Y \to X$.
As~$Y$ is normal and~$p$  is finite,
the normalisation of~$\P^1_{\bar \Q}$ in the function field of~$Y$
is a smooth curve~$C$ over~$\bar \Q$ containing~$Y$
as a dense open subset, equipped with a finite morphism $\pi:C \to \P^1_{\bar \Q}$
that extends~$p$.
As~$p$ is a $G$\nobreakdash-torsor, the function field extension $\bar \Q(C)/\bar \Q(t)$
is Galois with group~$G$
(see~\textsection\ref{subsec:torsorsandgalois}).
\end{proof}

\subsection{Monodromy of some non-Galois covers of the projective line}

Proposition~\ref{prop:descpi1} is also useful for computing
the monodromy of ramified covers of the complex projective line that are not necessarily Galois,
via the following result.

\begin{prop}
\label{prop:ssgsd}
Let~$C$ be a smooth, projective, irreducible curve over~$\C$,
endowed with a finite morphism $\pi:C \to \P^1_{\C}$.
Let $X \subseteq \P^1_\C$
be a dense open subset over which~$\pi$ is étale.
Fix $x \in X(\C)$ and write $\P^1_\C \setminus X = \{b_1,\dots,b_r\}$.
Let~$M$ denote the monodromy group of~$\pi$, i.e.\ the largest quotient of $\pi_1(X(\C),x)$
that still acts on~$\pi^{-1}(x)$.
After choosing a bijection $\pi^{-1}(x) \simeq \{1,\dots,n\}$, we view~$M$ as a transitive subgroup of
the symmetric group~$S_n$.
There exist
 $\mu_1,\dots,\mu_r \in M$  satisfying the following three properties:
\begin{enumerate}
\item the elements $\mu_1,\dots,\mu_r$ generate the group~$M$;
\item their product $\mu_1\cdots\mu_r$ is the identity of~$M$;
\item for each $i \in \{1,\dots,r\}$, the element $\mu_i \in S_n$ is a product
of cycles whose lengths are the ramification indices of~$\pi$ at the points of $\pi^{-1}(b_i)$.
\end{enumerate}
\end{prop}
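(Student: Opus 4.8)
The plan is to realise $M$ as the image of the monodromy homomorphism and then transport the presentation of Proposition~\ref{prop:descpi1} along it. Writing $Y=\pi^{-1}(X)$, the map $Y(\C)\to X(\C)$ is a finite topological covering, and by the correspondence between such coverings and finite $\pi_1(X(\C),x)$-sets (the same correspondence used in the proof of Corollary~\ref{cor:ret}), the monodromy action produces a homomorphism $\rho\colon \pi_1(X(\C),x)\to S_n$, where we use the chosen bijection $\pi^{-1}(x)\simeq\{1,\dots,n\}$. By the very definition of the monodromy group, $M$ is the image of $\rho$; and since $C$, hence $Y$, is irreducible, the covering $Y(\C)\to X(\C)$ is connected, so $\rho$ has transitive image and $M$ is a transitive subgroup of $S_n$, as asserted.

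First I would invoke Proposition~\ref{prop:descpi1} to fix generators $\gamma_1,\dots,\gamma_r$ of $\pi_1(X(\C),x)$ subject to the single relation $\gamma_1\cdots\gamma_r=1$, with $\gamma_i$ lying in the conjugacy class of a small counterclockwise loop around $b_i$. Setting $\mu_i=\rho(\gamma_i)\in M$, properties~(1) and~(2) are then immediate: the $\mu_i$ generate $M$ because $\rho$ is surjective onto $M$ while the $\gamma_i$ generate its source, and $\mu_1\cdots\mu_r=\rho(\gamma_1\cdots\gamma_r)=\rho(1)=1$.

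The hard part will be property~(3), which is purely local near each $b_i$ and is the only input not already supplied by Proposition~\ref{prop:descpi1}. Fixing $i$, let $N_i$ be a small disc around $b_i$ biholomorphic to the unit disc, as in Proposition~\ref{prop:descpi1}. The restriction of $\pi$ over $N_i\setminus\{b_i\}$ is a finite covering of a punctured disc, whose connected components correspond bijectively to the points $P\in\pi^{-1}(b_i)$: concretely, near each such $P$ there are local analytic coordinates in which $\pi$ takes the normal form $z\mapsto z^{e_P}$, where $e_P$ is the ramification index of $\pi$ at $P$, so that the component attached to $P$ is a connected covering of $N_i\setminus\{b_i\}$ of degree $e_P$. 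A counterclockwise generator of $\pi_1(N_i\setminus\{b_i\})\simeq\Z$ therefore acts on the $e_P$ sheets of that component as a single $e_P$-cycle. I would then transport this to the base point $x$: choosing a path from $x$ to a point of the loop that exhibits $\gamma_i$ as a conjugate of such a local generator, and using that cycle type is invariant under conjugation in $S_n$, I conclude that $\mu_i$ is a product of disjoint cycles of lengths $e_P$, one for each $P\in\pi^{-1}(b_i)$. (That these cycles exhaust $\{1,\dots,n\}$ reflects the identity $\sum_{P\in\pi^{-1}(b_i)}e_P=n$ for the finite map $\pi$.) The genuine difficulty is thus the local normal form of a finite holomorphic map of smooth curves, together with the elementary classification of coverings of the punctured disc; matching the resulting local cyclic permutations with the global monodromy permutation $\mu_i$ is the step demanding the most care.
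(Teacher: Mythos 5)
Your proposal is correct and follows the same route as the paper: properties (1) and (2) are obtained by pushing the generators of Proposition~\ref{prop:descpi1} into~$M$, and property (3) is reduced to the standard computation of the monodromy of a finite covering of the punctured disc via the local normal form $z\mapsto z^{e_P}$, using conjugation-invariance of cycle type. The paper's proof is just a terser version of exactly this argument.
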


\begin{proof}
Applying
Proposition~\ref{prop:descpi1}
and letting~$\mu_i$ denote the image of~$\gamma_i$ in~$M$, we obtain~(1) and~(2).
Property~(3) only depends on the conjugacy class of~$\gamma_i$
and is a standard calculation of the monodromy of the étale coverings of the punctured
unit disc.
\end{proof}

\begin{example}
\label{ex:rigsn}
Let~$C$ be a smooth, projective, irreducible curve over an algebraically closed subfield~$k$ of~$\C$, endowed with a morphism
$\pi:C \to \P^1_k$ of degree $n \geq 1$.
Assume that all ramification points have ramification index~$2$ and that
no two of them lie in the same fibre of~$\pi$.
Then the Galois group of a Galois closure of the function field extension
$k(C)/k(t)$ is the full symmetric group~$S_n$.  Indeed,  Remark~\ref{rk:ret}
and Proposition~\ref{prop:ssgsd} show that this Galois group is a transitive subgroup of~$S_n$
generated by transpositions; the only such subgroup is~$S_n$ itself.
\end{example}

\begin{rmk}
\label{rk:geomconnected}
Let~$k$ be a field of characteristic~$0$.
Let $k(t) \subseteq K \subseteq K' \subset \overline{k(t)}$ be a tower of fields,
where~$\overline{k(t)}$ denotes an algebraic closure of~$k(t)$,
where~$K/k(t)$ is a finite extension and where~$K'/k(t)$ is its Galois closure
inside~$\overline{k(t)}$.
Let us assume that~$k$ is algebraically closed in~$K$. The field~$k$ need not, in general, be algebraically closed in~$K'$.
(For example, if $k=\Q$ and $K=\Q(t^{1/n})$, then $K'=\Q(\zeta_n)(t^{1/n})$, where~$\zeta_n$ denotes a primitive
$n$th root of unity.)
This pathology, however, cannot occur if the underlying topological monodromy
 group is the full symmetric group, or, more generally, if it is a self-normalising subgroup of the ambient
symmetric group.
Indeed, let~$k'$ denote the algebraic closure of~$k$ in~$K'$, set $G=\Gal(K'/k(t))$ and $G_{\geom}=\Gal(K'/k'(t))$.
Letting~$\bar k$ denote the algebraic closure of~$k$ in~$\overline{k(t)}$, we
remark that $K' \otimes_{k'} \bar k$ and $K \otimes_k \bar k$ are fields and that the field extension
 $K' \otimes_{k'} \bar k / \bar k(t)$
is a Galois closure of
 $K \otimes_k \bar k / \bar k(t)$, so that its Galois group $G_{\geom}$
can be viewed as the topological monodromy group
associated with $K/k(t)$
(see Remark~\ref{rk:ret}).
Fix a primitive element $\alpha_1 \in K$ over~$k(t)$.
Denote by $\alpha_1,\dots,\alpha_n \in K'$ the collection of its Galois conjugates.
As~$G$ acts faithfully on the~$\alpha_i$'s, there is a sequence of inclusions
$G_{\geom} \subseteq G \subseteq S_n$.   As~$k'/k$ is a Galois field extension,
the group~$G_{\geom}$ is normal in~$G$; hence,
if~$G_{\geom}$ is self-normalising in~$S_n$,
then
 $G=G_{\geom}$ 
and~$k$ is algebraically closed in~$K'$.  Thus, for instance,
if the curve~$C$ and the morphism~$\pi$ of Example~\ref{ex:rigsn} come by scalar extension
from a curve and a morphism defined over~$\Q$, and if~$K/\Q(t)$ denotes the function field extension
given by the latter morphism, then a Galois closure of~$K/\Q(t)$ has Galois group~$S_n$.
\end{rmk}

In conjunction with Remark~\ref{rk:geomconnected},
Example~\ref{ex:rigsn} leads to many concrete examples of regular Galois extensions of~$\Q(t)$ with group~$S_n$.
Let us recall, however, that the mere existence of
 regular Galois extensions of~$\Q(t)$ with group~$S_n$
already followed
from the positive answer to Noether's problem for~$S_n$
over~$\Q$ (see Example~\ref{ex:noethersn} and
 Remark~\ref{rmk:noetherimpliesrig}).
As Noether's problem is open for the alternating group~$A_n$ over~$\Q$ as soon as $n \geq 6$,
it is of interest to note that Proposition~\ref{prop:ssgsd} also leads to concrete examples of regular
Galois extensions of~$\Q(t)$ with group~$A_n$ for all values of~$n$, as we now illustrate.

\begin{example}
\label{ex:alternating}
Let~$C$ be a smooth, projective, geometrically irreducible curve over a subfield~$k$ of~$\C$, endowed with a morphism
$\pi:C \to \P^1_k$ of degree $n \geq 3$.
Assume that~$\pi$ has exactly three ramification points,
that these ramification points are rational points of~$C$ lying above $0,1,\infty \in \P^1(k)$, with ramification indices~$e_0$, $e_1$, $e_\infty$, respectively,
and that $(e_0,e_1,e_\infty)=(n,n-1,2)$ if~$n$ is even
and $(e_0,e_1,e_\infty)=(n-1,n,2)$ if~$n$ is odd.
Let~$K'/k(t)$ denote a Galois closure of
 the function field extension
$k(C)/k(t)$.
We first note that~$K'$ is a regular Galois extension of~$k(t)$ with group~$S_n$.
Indeed, when~$k$ is algebraically closed, Remark~\ref{rk:ret}
and Proposition~\ref{prop:ssgsd} imply that $\Gal(K'/k(t))$ is a transitive subgroup of~$S_n$
that contains a cycle of order~$n-1$ and a transposition, but the only such subgroup is~$S_n$ itself (see
\cite[Lemma~4.4.3]{serretopics}); by Remark~\ref{rk:geomconnected}, the case of arbitrary~$k$ follows.
Secondly, we claim that there exists $\alpha \in k^*$
such that $\alpha t$ is a square in~$K'$.
Setting $u=\sqrt{\alpha t}$, this will imply that~$K'$ is a regular Galois extension of~$k(u)$
with group~$A_n$ (where~$u$ can now be viewed as a free variable), as desired.
To verify this claim, we note that the topological monodromy of the double cover of~$\P^1_k$
corresponding to the (unique) quadratic subextension~$L/k(t)$ of~$K'/k(t)$
is obtained by composing the topological monodromy of~$\pi$ with the signature morphism $S_n \to \Z/2\Z$.
As the local monodromy of~$\pi$ at~$1$ is given by a cycle of odd length,
it follows that~$L/k(t)$
is unramified outside of~$0$ and~$\infty$, and, hence, that $L=k\big(\sqrt{\alpha t}\mkern1mu\big)$ for some $\alpha \in k^*$
(as~$k$ is algebraically closed in~$L$).
\end{example}

For explicit equations to which Example~\ref{ex:alternating} can be applied,
see \cite[\textsection4.5]{serretopics}.

\subsection{Looking for covers over non-algebraically closed ground fields}

Now that we know that the regular inverse Galois problem has a positive answer
over~$\bar \Q$, we can try to
find solutions over~$\Q$ or at least over overfields of~$\Q$ as small as possible.
This has been achieved over the completions of~$\Q$,
thus yielding, for all finite groups, a positive answer to the regular inverse Galois problem
over~$\R$
(Krull and Neukirch~\cite{krullneukirch}) and over the field~$\Q_p$ of $p$\nobreakdash-adic numbers
for every prime~$p$ (Harbater~\cite{harbaterinvgal}).
Pop~\cite{poplarge} generalised these results as follows\footnote{It is not immediately clear that
the article \cite{poplarge} 
 establishes
Theorem~\ref{th:pop} as we have stated it, without assuming the field to be perfect:
in our definition of the regular inverse
Galois problem,  the sought-for field extension of~$k(t)$
was required to admit a \emph{smooth} projective model, which could fail over imperfect fields.
However, in any case, Theorem~\ref{th:pop} as we have stated it is proved in \cite[Théorème~1.1]{moretbaillyconstruction}.}\textsuperscript{,}\footnote{Theorem~\ref{th:pop} (at least for perfect large fields, see the previous footnote) also follows from the results of Harbater~\cite{harbaterinvgal}, see
\cite[\textsection4.5]{harbatericm}.}:

\begin{thm}[Harbater and Pop]
\label{th:pop}
The regular inverse Galois problem has a positive answer over any large field, for any finite group.
\end{thm}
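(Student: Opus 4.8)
The plan is to prove this by the method of patching. Riemann's existence theorem (Theorem~\ref{th:ret}), through its combinatorial shadow $\ni^\ast_r(G)$ (Corollary~\ref{cor:retgcovers}), already guarantees the cover after base change to~$\bar k$; the whole difficulty is to manufacture it over the large field~$k$ itself, and it is here that largeness---equivalently, the property that~$k$ be existentially closed in the Laurent series field $k((t))$---will be decisive. To begin, I would fix generators $g_1,\dots,g_r$ of~$G$ subject to $g_1\cdots g_r=1$ (take any generating set and adjoin the inverse of its product), and, using that large fields are infinite, fix $r$ distinct rational branch points $b_1,\dots,b_r\in\P^1(k)$. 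The goal then becomes the construction of a connected $G$\nobreakdash-torsor over $\P^1_k\setminus\{b_1,\dots,b_r\}$ whose local monodromy at each~$b_i$ is conjugate to~$g_i$: precisely an algebraic lift, defined over~$k$, of an element of $\ni^\ast_r(G)$.

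Next I would build the local pieces. Around each~$b_i$, over the complete local field $k((t_i))$ with uniformizer~$t_i$, one constructs an explicit totally ramified cyclic cover realizing $\langle g_i\rangle$ (a suitable Kummer cover, after adjoining roots of unity, or an Artin--Schreier--Witt cover in positive characteristic), while over the complement the cover is taken to be split. The remaining task is to amalgamate these local cyclic covers---whose groups together generate~$G$---into a single global object.

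The hard part is the patching step, and this is where I expect the main obstacle to lie. The gluing rests on a factorization lemma of Cartan type: every invertible matrix over the relevant complete ring decomposes as a product of invertible matrices over the local rings attached to the~$b_i$. Completeness is indispensable here, so the amalgamation produces a regular $G$\nobreakdash-cover not over~$k$ but over a completion, concretely over $k((t))$. The largeness of~$k$ is exactly the mechanism that brings this back to~$k$: the patched cover corresponds to a smooth $k((t))$\nobreakdash-point of a suitable $k$\nobreakdash-variety parametrizing $G$\nobreakdash-covers with the chosen branch data, and since~$k$ is existentially closed in $k((t))$, such a point descends to a $k$\nobreakdash-point, yielding the cover over~$k$ itself.

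Finally I would verify regularity and conclude. As the local monodromies $g_1,\dots,g_r$ generate~$G$, the geometric monodromy group is all of~$G$, so the cover is geometrically connected, i.e.\ $k$ is algebraically closed in its function field. Taking the normalization of $\P^1_k$ in this function field produces a smooth projective curve~$C$ together with a finite morphism $\pi:C\to\P^1_k$ for which $\Gal(k(C)/k(t))\simeq G$. The one remaining delicate point---smoothness of~$C$ when~$k$ is imperfect---is precisely what Moret-Bailly's formulation of the construction \cite{moretbaillyconstruction} secures, which is why the theorem holds as stated without any perfectness hypothesis.
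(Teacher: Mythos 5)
The paper does not prove this theorem: it states it with references to Harbater, Pop and Moret-Bailly, and only describes the strategy in the surrounding text (patching in the formal or rigid analytic context over a complete discretely valued field, together with a variant of Riemann's existence theorem). Your sketch is a faithful outline of exactly that strategy, including the two genuinely hard ingredients---the Cartan-type factorization lemma that makes the gluing over the complete field $k((t))$ work, and the descent to $k$ via Pop's characterisation of large fields as those existentially closed in $k((t))$---so it takes the same route as the proofs the paper cites; but both of these steps are asserted rather than established, so what you have is a correct roadmap to the literature rather than a proof. One small imprecision: framing the goal as producing ``an algebraic lift of an element of $\ni^\ast_r(G)$'' implicitly assumes characteristic~$0$ (or tameness), since Riemann's existence theorem and Corollary~\ref{cor:retgcovers} are only available over subfields of~$\C$; in positive characteristic the patching construction does not lift a topological datum but replaces Riemann's existence theorem outright, which is why the local pieces must be built by Artin--Schreier--Witt theory as you note, and why the claim that the local inertia generators exhaust the geometric monodromy requires a separate argument there rather than following from a presentation of the fundamental group.
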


By definition, a field~$k$ is \emph{large} when every smooth curve over~$k$ that has a rational point
has infinitely many of them.  Examples include all fields that are complete with respect to an absolute value,
such as~$\R$ and~$\Qp$, as well as infinite algebraic extensions of finite fields or more generally
all so-called pseudo-algebraically closed fields (fields over which every smooth geometrically connected
curve has infinitely many rational points).

The proofs of Theorem~\ref{th:pop} given by Harbater and by Pop rely, in the formal or in
the rigid analytic context over a
 complete discretely valued ground
field,
on the construction, by patching, of appropriate ``topological coverings'',
and on a variant, in the corresponding context, of Riemann's existence theorem.  Over~$\C$,
the underlying patching construction is presented in \cite[\textsection3.5]{szamuelygaloisgroups}.

Theorem~\ref{th:pop} had previously been established by Fried and Völklein~\cite{friedvolkleinmoduli}
in the case of pseudo-algebraically closed fields of characteristic~$0$.  From this special case they
 deduced the following result in positive characteristic:

\begin{thm}[Fried and Völklein]
Let~$G$ be a finite group.  The regular inverse Galois problem has a positive answer for~$G$
over~$\F_p(t)$ for all but finitely many primes~$p$.
\end{thm}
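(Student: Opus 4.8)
The plan is to deduce the positive-characteristic statement from the characteristic-$0$ PAC case by a good-reduction argument on Hurwitz moduli spaces. First I would fix, once and for all, an integer $r$ large enough that $\ni^\ast_r(G)\neq\emptyset$ and indeed large enough for the Conway--Parker theorem to apply, and choose a Nielsen class in $\ni^\ast_r(G)$. To this data one attaches the (inner, reduced) Hurwitz space $\sH$, a quasi-projective variety over~$\Q$ parametrising $G$-covers of~$\P^1$ with $r$ branch points of the prescribed type and finite étale over the configuration space $U_r$ of $r$-point subsets of~$\P^1$. The very proof of the characteristic-$0$ PAC case produces a geometrically irreducible component $\sH_0$ of~$\sH$, defined over~$\Q$, whose $K$-points yield regular $G$-realisations over~$K$ for every field $K\supseteq\Q$: this is where the branch-cycle argument, used to descend the component to~$\Q$, and the identification of field of moduli with field of definition on the inner Hurwitz space, enter, over a PAC field the component then having a rational point by the very definition of PAC. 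It is this geometric object, rather than the characteristic-$0$ conclusion itself, that I would transport to characteristic~$p$.

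Next I would spread everything out. The variety $\sH_0$, the covering $\sH_0\to U_r$ and the universal family of $G$-covers extend to schemes that are smooth over $\Z[1/N]$ for a suitable integer~$N$, which I take divisible by~$|G|$ and by all primes at which the chosen ramification could fail to remain tame. Because $\sH_0\to U_r$ is tamely étale, Grothendieck's specialisation isomorphism for tame fundamental groups identifies the characteristic-$p$ monodromy on the Nielsen class with its topological counterpart for every $p\nmid N$; consequently the number of geometric connected components of the fibre $\sH_{0,\Fp}$ equals the number of braid orbits on the Nielsen class, a combinatorial quantity independent of~$p$. After enlarging~$N$ if necessary, $\sH_{0,\Fp}$ is therefore geometrically irreducible over~$\Fp$ for all $p\nmid N$, and the family it carries still consists fibrewise of tamely ramified, geometrically connected $G$-covers of~$\P^1$.

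Finally I would produce the sought realisation over~$\Fp(t)$. Since $\sH_{0,\Fp}$ is geometrically irreducible of dimension and degree bounded independently of~$p$, the Lang--Weil estimate guarantees an $\Fp$-rational point lying in the interior $\sH_{0,\Fp}^{\circ}$, the locus where the parametrised cover does not degenerate, as soon as~$p$ exceeds a bound depending only on~$G$ and~$r$; the finitely many smaller primes are exactly those excluded in the statement. Such a point is a $G$-cover over~$\overline{\Fp}$ with field of moduli~$\Fp$, and since $\Gal(\overline{\Fp}/\Fp)\simeq\Zhat$ has cohomological dimension~$1$ the obstruction to descent, a class in $H^2(\Fp,Z(G))$, vanishes, so the cover and its $G$-action are defined over~$\Fp$. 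This already gives a regular $G$-extension of~$\Fp(x)$; extending scalars along $\Fp\hookrightarrow\Fp(t)$ then yields a regular $G$-extension of~$\Fp(t)(x)$, which is the required positive answer over~$\Fp(t)$. Alternatively, one can avoid the cohomological input by passing through a rational curve in $\sH_{0,\Fp}$ meeting $\sH_{0,\Fp}^{\circ}$ and reading off an $\Fp(t)$-point directly.

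I expect the crux to be the second step: establishing the geometric irreducibility of a rational component $\sH_0$ and, above all, showing that it survives reduction modulo~$p$. This is where the genuinely hard input lies, combining the characteristic-$0$ braid-orbit analysis behind the Conway--Parker theorem with Grothendieck's comparison of tame fundamental groups in characteristics~$0$ and~$p$. By contrast the point-counting via Lang--Weil and the descent over the finite field are soft, and the passage from~$\Fp$ to~$\Fp(t)$ is a formality; the role of the variable~$t$ is merely to provide a safe, uniform target that sidesteps any residual field-of-definition subtleties for small residue fields.
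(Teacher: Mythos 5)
The paper offers no proof of this statement: it is recorded as the corollary that Fried and Völklein drew from their characteristic-zero PAC theorem, with a bare reference to \cite{friedvolkleinmoduli}. Your proposal is a correct deduction in outline and is faithful to the geometric input of that source (Conway--Parker, the branch-cycle argument, a geometrically irreducible component of a Hurwitz space defined over~$\Q$), but the transfer to characteristic~$p$ is done differently there: Fried and Völklein observe that a non-principal ultraproduct of the prime fields $\Fp$ is, by Ax's theorem on pseudo-finite fields, a perfect PAC field of characteristic zero; their PAC theorem then furnishes a regular $G$\nobreakdash-realisation over that ultraproduct, and since the existence of a realisation of bounded degree is a first-order statement, \L{}o\'s's theorem pushes it back to $\Fp$ for all but finitely many~$p$. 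That route is softer than yours: it needs no integral model of the Hurwitz space, no comparison of tame fundamental groups, and no Lang--Weil. Your route buys more --- in principle an effective bound on the excluded primes and control over the ramification type of the realisation --- at the cost of the one genuinely delicate point you should not gloss over: an $\Fp$\nobreakdash-point of the spread-out scheme is only useful if that scheme still (coarsely) represents tame $G$\nobreakdash-covers in characteristic~$p$, which requires the arithmetic construction of Hurwitz spaces over $\Z[1/|G|]$ (Fried; Wewers \cite{wewersphd}), not merely EGA-style spreading out of the $\Q$\nobreakdash-variety together with an appeal to constructibility.

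Granting that, the remaining steps of your argument are sound: geometric irreducibility of the fibre for almost all~$p$ already follows from constructibility of that property (your tame-$\pi_1$ comparison is a more precise substitute); Lang--Weil produces the $\Fp$\nobreakdash-point, and no ``interior'' needs to be carved out since the Hurwitz space is already finite étale over the configuration space~$\sU_r$; and the field-of-moduli obstruction in $H^2(\Fp,Z(G))$ vanishes because $\Gal(\overline{\F}_{\mkern-2mup}/\Fp)\simeq\Zhat$ has cohomological dimension one (alternatively, reduce to trivial centre as in \cite[Lemma~2]{friedvolkleinmoduli}). One small remark on the conclusion: in this paper's conventions the statement asks for a Galois extension of $\Fp(t)$ with group~$G$ that is regular over~$\Fp$, which is exactly what your penultimate sentence already produces; the final base change to $\Fp(t)(x)$ proves a different (also true) statement and is not needed.
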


Colliot-Thélène later
shed new light on Theorem~\ref{th:pop} by recasting it
as a theorem about the existence of suitable rational curves
on the varieties $\A^n_k/G$ appearing in Noether's problem, and by noting that
even though these varieties can fail to be rational,
they are in any case \emph{rationally connected}, which opens the door
to applications of the theory of deformation of rational
curves on rationally connected varieties over large fields---a
theory developed, in great generality, by Kollár~\cite{kollarloc}.
Over large fields of characteristic~$0$,
a geometric proof of Theorem~\ref{th:pop} that proceeds by constructing rational curves
on~$\A^n_k/G$ was thus given in~\cite{ctrc}.
See also
\cite{kollarfundamentalgroups}, \cite{kollarfundamentalgroups2},
\cite{moretbaillyconstruction} for generalisations.

Unfortunately,
no method for the systematic construction of rational curves on rationally
connected varieties over~$\Q$ is known;
more generally,
the various methods on which all known proofs of Theorem~\ref{th:pop} rely
fall short
of solving any case of the regular inverse Galois problem over a given number field.
As of today, all known constructions of realisations of finite groups as regular Galois groups over~$\Q$ exploit
more or less ad hoc ideas.  One of the most successful approaches is the rigidity method,
initiated by Shih, Fried, Belyi, Matzat and Thompson in the 1970's and the 1980's, which we discuss next,
in \textsection\textsection\ref{subsec:hurwitzspaces}--\ref{subsec:rigidity}.

\begin{rmk}
In practice, the rigidity method is particularly useful for realising finite simple groups.
Considering the regular inverse Galois problem
for various classes of non-simple groups, especially central extensions,
also leads to interesting challenges,
better dealt with by other methods.
We refer the reader to the work of Mestre~\cite{mestreantilde, mestresl2f7m12tilde, mestre6a66a7},
who solved it
for the central extensions of~$A_n$ for all~$n$, as well as for~$\SL_2(\F_7)$ and for the
unique nontrivial
central extension of the Mathieu group~$M_{12}$ by~$\Z/2\Z$
(starting from
known regular realisations of~$\PSL_2(\F_7)$ and of~$M_{12}$).
To put these results into perspective,
let us note that Noether's problem has negative answers, over~$\Q$, for the
unique nontrivial central extensions of~$A_6$ and of~$A_7$ by~$\Z/2\Z$ (Serre \cite[Theorem~33.25]{garibaldimerkurjevserre}).
\end{rmk}

\subsection{Hurwitz spaces}
\label{subsec:hurwitzspaces}

Even though Hurwitz spaces are not necessary for the description and the implementation of the rigidity method,
their introduction makes the theory rather transparent; in addition, they are indeed indispensable
for some of its refinements.
Hurwitz spaces are moduli spaces of smooth projective irreducible covers of the
projective line.  We shall consider them only in characteristic~$0$.
In addition, we shall restrict attention to the moduli space of $G$\nobreakdash-covers;
the term ``$G$\nobreakdash-cover'' is another
name for the regular Galois extensions of~$k(t)$ with group~$G$ that we have been considering
since the beginning of~\textsection\ref{sec:rig}:

\begin{defn}
\label{def:gcover}
Let~$G$ be a finite group.  Let~$k$ be a field.
A \emph{$G$\nobreakdash-cover} over~$k$
is a smooth, proper, geometrically irreducible curve~$C$ over~$k$
endowed, on the one hand, with a finite morphism $\pi:C\to \P^1_k$
such that the corresponding extension of function fields $k(C)/k(t)$
is Galois, and, on the other hand,
with an isomorphism $G \isoto \Gal(k(C)/k(t))$.
 (In particular~$G$ acts faithfully on~$C$
and the morphism $\pi^{-1}(U) \to U$ induced by~$\pi$
is a $G$\nobreakdash-torsor for any dense open subset
$U \subset \P^1_k$ above which~$\pi$ is étale.)
\end{defn}

The group of automorphisms of any $G$\nobreakdash-cover, i.e.\ the group of automorphisms of~$C$ that respect not only
the morphism~$\pi$
but also the given isomorphism $G \isoto \Gal(k(C)/k(t))$,
is the centre of~$G$.  We shall assume,  until the end of~\textsection\ref{sec:rig},
that~$G$ has trivial centre.  This is not too serious a restriction (as any
finite group is a quotient of a finite group with trivial centre,
see \cite[Lemma~2]{friedvolkleinmoduli})
and it will ensure that our
moduli space is a variety rather than a stack
(as the objects that we want to classify have no
nontrivial automorphism).

To prepare for the statement of the next theorem, we need to introduce some notation.
When~$k$ has characteristic~$0$,
the \emph{branch locus} of $\pi:C\to\P^1_k$ is by definition the smallest
reduced
$0$\nobreakdash-dimensional subvariety~$B$ of~$\P^1_k$ such that~$\pi$ is étale over $\P^1_k\setminus B$.
Its \emph{degree} is the cardinality of $B(\bar k)$, where~$\bar k$ denotes an algebraic closure of~$k$.
For any integer $r \geq 1$, we denote by
 $\sU^r \subset (\P^1_\Q)^r$ the locus of $r$\nobreakdash-tuples with  pairwise distinct components,
and by~$\sU_r$ the quotient of $\sU^r$ by the natural
action of the symmetric group~$S_r$.  Thus~$\sU_r$ is a smooth variety over~$\Q$,
and for any field~$k$ of characteristic~$0$,
the set $\sU_r(k)$ can be  identified with the set of 
reduced
$0$\nobreakdash-dimensional subvarieties of~$\P^1_k$ of degree~$r$,
i.e.\ with the set of subsets of $\P^1(\bar k)$ of cardinality~$r$ that are stable under
$\Gal(\bar k/k)$.

\begin{thm}[Fried and Völklein \cite{friedvolkleinmoduli}]
\label{th:hurwitzspaces}
Let~$G$ be a finite group with trivial centre
and $r\geq 1$ be an integer.
With~$G$ and~$r$,
one can canonically associate
a smooth variety~$\sH_{G,r}$ over~$\Q$ such that for any field~$k$ of characteristic~$0$,
the set $\sH_{G,r}(k)$ is the set of isomorphism classes of $G$\nobreakdash-covers over~$k$
whose branch locus has degree~$r$.
It is equipped with a finite étale morphism $\rho:\sH_{G,r} \to \sU_r$ that maps the isomorphism class
of a $G$\nobreakdash-cover to its branch locus.
\end{thm}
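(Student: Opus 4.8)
The plan is to build the space first over~$\C$ as a topological covering of $\sU_r(\C)$, to algebraise it by Riemann's existence theorem, and only then to descend the whole picture to~$\Q$. First, fix a base point $[B]\in\sU_r(\C)$ corresponding to a branch locus $B=\{b_1,\dots,b_r\}$ and set $X=\P^1_\C\setminus B$. By Corollary~\ref{cor:retgcovers}, once a presentation of $\pi_1(X(\C),x)$ as in Proposition~\ref{prop:descpi1} is fixed, the $G$\nobreakdash-covers over~$\C$ with branch locus exactly~$B$ are classified by the finite set $\ni^\ast_r(G)$ of tuples $(g_1,\dots,g_r)$ with $g_1\cdots g_r=1$ generating~$G$, modulo simultaneous conjugation; the trivial-centre hypothesis guarantees that such covers have no nontrivial automorphism, so that this is a classification by a genuine set. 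As $[B]$ moves in $\sU_r(\C)$, dragging the punctures deforms the chosen loops, and the resulting action of the Hurwitz braid group $\pi_1(\sU_r(\C),[B])$ on $\ni^\ast_r(G)$ assembles these fibres into a finite topological covering $\sH\to\sU_r(\C)$ with fibre $\ni^\ast_r(G)$ over~$[B]$.

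Next, I would algebraise this covering. By Riemann's existence theorem (Theorem~\ref{th:ret}) applied to the complex variety $\sU_{r,\C}$, the finite topological covering $\sH$ corresponds to a unique finite étale morphism $\rho_\C:\sH_{G,r,\C}\to\sU_{r,\C}$ of varieties over~$\C$. Over each fibre the construction of Corollary~\ref{cor:retgcovers} produces, functorially in the choice of a lifted base point (Corollary~\ref{cor:ret}), an actual $G$\nobreakdash-cover; spreading these out over $\sH_{G,r,\C}$ yields a universal family, so that for every algebraically closed field $k$ with $\Q\subseteq k\subseteq\C$ the set $\sH_{G,r,\C}(k)$ gets identified with the set of isomorphism classes of $G$\nobreakdash-covers over~$k$ whose branch locus has degree~$r$, compatibly with~$\rho_\C$ and the branch-locus map.

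Finally, it remains to descend $\rho_\C$ from~$\C$ to~$\Q$ and to verify the moduli property over an arbitrary field~$k$ of characteristic~$0$, and this is where the real work lies. The base $\sU_r$ is already defined over~$\Q$. The cleanest route is to define, directly over~$\Q$, the functor sending a $\Q$\nobreakdash-scheme~$S$ to the set of isomorphism classes of relative $G$\nobreakdash-covers of $\P^1_S$ whose branch locus is finite étale of degree~$r$ over~$S$; because~$G$ has trivial centre these covers are rigid, so this functor is a sheaf representable by a scheme rather than a stack, and representability and finite étaleness over~$\sU_r$ can be checked étale-locally. Its base change to~$\C$ must recover~$\rho_\C$: here one uses that the braid action on $\ni^\ast_r(G)$ is purely combinatorial, hence $\Aut(\C/\Q)$\nobreakdash-equivariant, so that $\sH_{G,r,\C}$ carries descent data and descends to a $\Q$\nobreakdash-variety~$\sH_{G,r}$. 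The main obstacle will be precisely this descent together with the passage from algebraically closed to general~$k$: one must run a Galois-descent argument for $G$\nobreakdash-covers, again invoking the triviality of the centre to ensure that an isomorphism between two covers is unique when it exists, so that $\sH_{G,r}(k)$ is in bijection with the isomorphism classes of $G$\nobreakdash-covers over~$k$ for every field~$k$ of characteristic~$0$.
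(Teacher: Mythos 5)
The paper does not actually prove this theorem: it cites Fried--Völklein and, just after the statement, only indicates the modern route (representability over $\Q$ of the moduli functor of $G$\nobreakdash-covers of schemes, following Wewers). Your sketch follows instead the classical Fried--Völklein construction---braid action on Nielsen classes, Riemann's existence theorem, Galois descent---which is a legitimate strategy, but it contains one concrete error. The fibre of $\rho$ over $[B]$ must be the subset $\ni_r(G)\subseteq\ni^\ast_r(G)$ of classes of tuples with all $g_i\neq 1$ (the set introduced in \textsection\ref{subsec:hurwitzspaces}), not $\ni^\ast_r(G)$ itself: Corollary~\ref{cor:retgcovers} classifies $G$\nobreakdash-torsors over $\P^1_\C\setminus B$, i.e.\ $G$\nobreakdash-covers whose branch locus is merely \emph{contained} in~$B$, and a tuple with some $g_i=1$ corresponds to a cover unramified at~$b_i$, hence of branch degree $<r$. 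Gluing the sets $\ni^\ast_r(G)$ does produce a finite covering of $\sU_r(\C)$, but its points are pairs consisting of a cover and an $r$\nobreakdash-point set containing its branch locus; the map to isomorphism classes of $G$\nobreakdash-covers is then far from injective and $\rho$ no longer computes the branch locus, so the moduli property in the statement fails. Since $\ni_r(G)$ is stable under the Hurwitz braid action, the fix is painless, but it is essential.

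The other weak point is the descent from $\C$ (or $\bar\Q$) to $\Q$, which you correctly identify as the real work but then dispatch with the claim that the braid action is ``purely combinatorial, hence $\Aut(\C/\Q)$\nobreakdash-equivariant''. That is not an argument: a finite étale covering of $\sU_{r,\bar\Q}$ does not descend to~$\Q$ just because its monodromy is described combinatorially---a descent datum is extra structure, and $\Aut(\C/\Q)$ does not act on $\pi_1(\sU_r(\C),[B])$ in any useful sense. The descent datum has to come from the moduli interpretation: $\sigma\in\Gal(\bar\Q/\Q)$ carries a $G$\nobreakdash-cover to its conjugate, whose branch locus is the conjugate branch locus, and the triviality of the centre makes the resulting identification of $\sigma^*\sH_{G,r,\bar\Q}$ with $\sH_{G,r,\bar\Q}$ canonical and cocycle-compatible, hence effective. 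You do gesture at exactly this in your final sentences, and the cleanest packaging is the functorial one you also mention (define the moduli functor of $G$\nobreakdash-covers over $\Q$\nobreakdash-schemes and prove it representable, the topological computation then serving only to identify the geometric fibres with $\ni_r(G)$); but as written the proposal hedges between the two routes and leaves the descent datum unconstructed. Commit to one of them and make that datum explicit.
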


The variety $\sH_{G,r}$ is called a \emph{Hurwitz space}.

A modern approach to Theorem~\ref{th:hurwitzspaces} consists
in defining $G$\nobreakdash-covers
 not just over fields, as in Definition~\ref{def:gcover},
but more generally over schemes; one then proves
that the resulting moduli
functor on the category of schemes of characteristic~$0$
is representable, by~$\sH_{G,r}$.
This is the approach adopted by
Wewers~\cite{wewersphd}, who works more generally over~$\Z$ (with tame covers)
and without assuming that the centre
of~$G$ is trivial (thus obtaining a moduli stack $\sH_{G,r}$).  See \cite{romagnywewers}.

We note that
Hurwitz spaces were first contemplated by Hurwitz~\cite{hurwitzorig}, and, with a functorial point of view,
by Fulton~\cite{fultonhurwitz}. However, these authors only considered
(non-Galois) covers with ``simple'' ramification, i.e.\ such
that all ramification points have ramification index~$2$
and no two of them lie over the same branch point. This is insufficient for the purposes
of the regular inverse Galois problem (see Example~\ref{ex:rigsn}).

Let us come back to our motivation.
It is tautological that for any finite group~$G$ with trivial centre, the regular inverse Galois problem
admits a positive answer for~$G$ over~$\Q$ if and only if there exists an
integer $r \geq 1$ such that $\sH_{G,r}(\Q)\neq\emptyset$.
For the question of the existence of a rational point
on one of the varieties
 $\sH_{G,r}$
to be tractable, one needs, in turn, some understanding of their geometry.

The varieties~$\sH_{G,r}$ can be described in a very explicit combinatorial fashion, at least
geometrically,
thanks to the finite étale morphism
$\rho:\sH_{G,r} \to \sU_r$
given by Theorem~\ref{th:hurwitzspaces}.
Let us pick up the notation $\ni^\ast_r(G)$ introduced in Corollary~\ref{cor:retgcovers}
and write
 $\ni_r(G) \subseteq \ni^\ast_r(G)$ for the subset formed by the conjugacy classes of those $r$\nobreakdash-tuples
$(g_1,\dots,g_r)$ such that none of the~$g_i$'s is equal to~$1$.
It then follows from
(the proof of) Corollary~\ref{cor:retgcovers}
that
the fibre of~$\rho$
above any complex point of~$\sU_r$ can be identified with~$\ni_r(G)$.
In addition, the fundamental group of $\sU_r(\C)$ admits a down-to-earth presentation (as a quotient
of the Artin braid group by one relation) and its action on~$\ni_r(G)$ can also be made explicit
(see \cite[\textsection1.3]{friedvolkleinmoduli}).
Thus, for instance, the task of describing the irreducible components of the variety~$(\sH_{G,r})_{\bar \Q}$
becomes equivalent to that of computing the orbits of a certain action of the braid group on~$\ni_r(G)$.
Unfortunately, as~$r$ increases, even this ``simple'' task quickly becomes computationally infeasible for modern computers (see e.g.\ \cite{haefner}).

\subsection{The rigidity method}
\label{subsec:rigidity}

This method consists in cleverly
identifying irreducible components of~$\sH_{G,r}$ that contain rational points
for somehow ``trivial'' reasons.
To explain it, we need to refine the étale covering~$\rho$ that appears in Theorem~\ref{th:hurwitzspaces}.

\subsubsection{Algebraic local monodromy}
\label{subsubsec:alglocalmon}

Let~$k$ be a field of characteristic~$0$.
Let $\pi:C \to \P^1_k$ be a $G$\nobreakdash-cover over~$k$.
Let $b_i \in \P^1(k)$ be a rational branch point.
Let $X \subset \P^1_k$ be a dense open subset over which~$\pi$ is étale.

Under the assumption that~$k$ is a subfield of~$\C$,
we have associated with~$\pi$ and~$b_i$,
in~\textsection\ref{subsec:classifyc},
a canonical conjugacy class of~$G$,
namely the conjugacy class of the element~$g_i$ appearing
in Corollary~\ref{cor:retgcovers}.
This is the ``local monodromy'' of~$\pi$ at~$b_i$.
We recall that it  is the image,
by a surjection $\pi_1(X(\C),x)\twoheadrightarrow G$ that is well-defined up to conjugation,
of the conjugacy class of a local counterclockwise loop around~$b_i$.
To make this topological definition fit in with the moduli picture of Theorem~\ref{th:hurwitzspaces}
and in particular to understand how it
behaves
with respect to the action of the group of automorphisms of~$k$,
we need to make it algebraic.

We do this as follows.
Let~$\bar k$ be an algebraic
closure of~$k$.
  The completion $\bar k(t)_{b_i}$
of~$\bar k(t)$ at the discrete valuation defined by~$b_i$ is isomorphic
to the field of formal power series~$\bar k((u))$, whose algebraic
closure is the field of Puiseux series $\bigcup_{n\geq 1} \bar k((u^{1/n}))$.
By Kummer theory,
the absolute Galois group of $\bar k(t)_{b_i}$ is
canonically isomorphic to $\Zhat(1)_{\bar k}=\varprojlim_{n\geq 1} \mmu_n(\bar k)$. The inclusion of fields $\bar k(t) \hookrightarrow \bar k(t)_{b_i}$ induces a continuous
 homomorphism in the reverse direction,
well-defined up to conjugation,
 between their absolute Galois groups;
hence it induces a continuous
homomorphism $\hat\Z(1)_{\bar k} \to G$,
well-defined up to conjugation by an element of~$G$.
This conjugacy class of homomorphisms $\hat\Z(1)_{\bar k} \to G$ is the analogue of the~$g_i$ from Corollary~\ref{cor:retgcovers}.  We call it the \emph{algebraic local monodromy} of~$\pi$ at~$b_i$.

\begin{rmks}
\label{rmks:zhat1zhat}
(i)
When~$k$ is a subfield of~$\C$, one can use the generator $\zeta_n=e^{2i\pi/n}$
of~$\mmu_n(\bar k)$ to identify~$\Zhat(1)_{\bar k}$ with~$\Zhat$ as topological groups (i.e.\ disregarding Galois actions).
The algebraic local monodromy of~$\pi$ at~$b_i$
then becomes identified with 
 a conjugacy class of~$G$.
One verifies that through this identification, the algebraic local monodromy of~$\pi$ at~$b_i$
coincides with the conjugacy class of~$g_i$ from Corollary~\ref{cor:retgcovers}.

(ii)
The natural action of $\Gal(\bar k/k)$ on~$\Zhat(1)_{\bar k}$
induces an action of $\Gal(\bar k/k)$ on the set of conjugacy classes of homomorphisms
$\hat\Z(1)_{\bar k} \to G$.  It therefore makes sense to consider the conjugates, under this action
of~$\Gal(\bar k/k)$,
of the algebraic local monodromy of~$\pi$ at~$b_i$.  (We recall that $b_i \in \P^1(k)$.)
This feature of the algebraic point of view
plays a crucial rôle in the rigidity method.
It is not visible on the topological side of the identification
of Remark~\ref{rmks:zhat1zhat}~(i),
except when $k=\R$. Indeed, in this case,
complex conjugation acts by multiplication by~$-1$ on $\hat\Z(1)_{\bar k}$,
hence maps the algebraic local monodromy of~$\pi$ at~$b_i$ to its ``inverse'';
while on the topological side, it sends a local counterclockwise loop around~$b_i$
to a local clockwise loop around~$b_i$, and hence again it maps the conjugacy
class of~$g_i$ to the conjugacy class of~$g_i^{-1}$.

(iii)
For an arbitrary field~$k$ of characteristic~$0$,
the choice of a topological generator of the procyclic group
$\Zhat(1)_{\bar k}$ should be thought of as an algebraic analogue of the choice of an orientation
of the punctured unit disc (an insight of Grothendieck,
see \cite[(2.1)]{deligneweil1}),
as is illustrated by Remark~\ref{rmks:zhat1zhat}~(ii).
\end{rmks}

\subsubsection{Factoring~$\rho$}

The
natural action of $\Gal(\bar \Q/\Q)$
 on the
finite set
$\Hom_{\text{cont}}\Big(\hat\Z(1)_{\bar \Q},G\Big)$
 of  continuous homomorphisms
 $\hat\Z(1)_{\bar \Q} \to G$ induces a continuous action
of $\Gal(\bar \Q/\Q)$
 on the quotient of this finite set by the  conjugation action of~$G$.
As
the functor
\begin{gather*}
\Big(\text{reduced $0$-dimensional varieties over~$\Q$}\Big)
\to \Bigg(\begin{array}{c}\text{finite sets endowed with a}\\\text{continuous action of $\Gal(\bar \Q/\Q)$}\end{array}\Bigg)
\end{gather*}
that sends a variety~$Z$ to the set~$Z(\bar \Q)$ is an equivalence of categories,
we can canonically associate with~$G$
 a reduced $0$\nobreakdash-dimensional variety~$\sC_G$ over~$\Q$
such that
\begin{align*}
\sC_G(\bar \Q)=\Hom_{\text{cont}}\Big(\hat\Z(1)_{\bar \Q},G\Big)/(\text{conjugation by $G$})\rlap,
\end{align*}
compatibly with the natural continuous actions of~$\Gal(\bar \Q/\Q)$ on both sides.
One then has a canonical 
 $\Gal(\bar k/k)$\nobreakdash-equivariant
identification
\begin{align*}
\sC_G(\bar k)=\Hom_{\text{cont}}\Big(\hat\Z(1)_{\bar k},G\Big)/(\text{conjugation by $G$})
\end{align*}
for any field~$k$ of characteristic~$0$, with algebraic closure~$\bar k$.

Given a $G$\nobreakdash-cover
$\pi:C \to \P^1_k$ over
a field~$k$ of characteristic~$0$
and a rational branch point
$b \in \P^1(k)$,
the algebraic local monodromy of~$\pi$ at~$b$
defined in~\textsection\ref{subsubsec:alglocalmon}
is thus an element of $\sC_G(k)$.  More generally, if $b \in \P^1_k$ is an
arbitrary
branch point of~$\pi$ (i.e.\ a closed point, not necessarily rational),
applying the definition of~\textsection\ref{subsubsec:alglocalmon}
to the $G$\nobreakdash-cover over~$k(b)$ obtained from~$\pi$ by scalar extension
from~$k$ to~$k(b)$
and to the rational branch point of this $G$\nobreakdash-cover
induced by~$b$, we obtain an element of $\sC_G(k(b))$, which we still
call the \emph{algebraic local monodromy of~$\pi$ at~$b$}.
Viewing this element as a morphism
$\Spec(k(b)) \to \sC_G$
and writing the branch locus $B \subset \P^1_k$ of~$\pi$
as the disjoint union, over the points $b \in B$, of the schemes $\Spec(k(b))$,
we thus obtain a morphism of varieties $B \to \sC_G$.
We call it the \emph{algebraic local monodromy morphism of~$\pi$}.

For any $r \geq 1$, we denote by
 $\sV^r \subset (\P^1_\Q \times \sC_G)^r$ the inverse image of~$\sU^r$ by the projection
$(\P^1_\Q \times \sC_G)^r \to (\P^1_\Q)^r$,
and by~$\sV_r$ the quotient of $\sV^r$ by the natural
action of the symmetric group~$S_r$.
We have thus produced an étale covering $\nu:\sV_r \to \sU_r$
of smooth varieties over~$\Q$.
For any field~$k$ of characteristic~$0$,
the set $\sV_r(k)$ can be identified with the set
of
reduced
$0$\nobreakdash-dimensional subvarieties of $\P^1_k \times \sC_G$ of degree~$r$
that map isomorphically to their image in~$\P^1_k$, or, what is the same,
to the set of
reduced
$0$\nobreakdash-dimensional subvarieties~$B$ of~$\P^1_k$ of degree~$r$
endowed with a morphism $B\to \sC_G$.

For any field~$k$ of characteristic~$0$, associating with each $G$\nobreakdash-cover
its branch locus together with
its algebraic local monodromy morphism
provides us with a map $\sH_{G,r}(k) \to \sV_r(k)$.
When $k=\Q(I)$ is the function field of a connected component~$I$ of~$\sH_{G,r}$,
the image of the generic point of~$I$
by this map gives rise to a rational map $I \dashrightarrow \sV_r$.
Being a rational map between étale coverings of the normal variety~$\sU_r$, it is in fact a morphism.
By letting~$I$ vary over all connected components of~$\sH_{G,r}$, we obtain, in this way,
a morphism $\rho':\sH_{G,r} \to \sV_r$ such that $\rho=\nu\circ\rho'$.

Let~$\Cl(G)$ denote the set of conjugacy classes of~$G$.
Remark~\ref{rmks:zhat1zhat}~(i) and (the proof of)
Corollary~\ref{cor:retgcovers} together
imply the following explicit description
of the complex fibres of~$\rho'$:

\begin{prop}
\label{prop:descrhop}
Let $B \subset \P^1_\C$ be a reduced $0$\nobreakdash-dimensional subvariety of degree~$r$.
Write $B=\{b_1,\dots,b_r\}$.
Let $C=(C_1,\dots,C_r)$ be an $r$\nobreakdash-tuple of nontrivial conjugacy classes of~$G$,
viewed as a map $B(\C) \to \sC_G(\C)$
via the identification $\sC_G(\C)=\Cl(G)$ of Remark~\ref{rmks:zhat1zhat}~(i).
Then the fibre of~$\rho'$
above the point of~$\sV_r(\C)$ defined by~$B$ and~$C$ can be identified with the
quotient $\ni^C_r(G)$ of the set of $r$\nobreakdash-tuples $(g_1,\dots,g_r) \in G^r$ satisfying the following
three conditions by the action of~$G$ on this set by simultaneous conjugation:
\begin{enumerate}
\item
$g_1\cdots g_r=1$;
\item $g_1,\dots,g_r$ generate~$G$;
\item
$g_i \in C_i$ for all $i \in \{1,\dots,r\}$.
\end{enumerate}
\end{prop}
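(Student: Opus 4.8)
The plan is to combine the explicit description of irreducible $G$\nobreakdash-torsors over open subsets of~$\P^1_\C$ given by Corollary~\ref{cor:retgcovers} with the matching of algebraic and topological local monodromy recorded in Remark~\ref{rmks:zhat1zhat}~(i); these are precisely the two ingredients that the statement advertises. First I would unwind the fibre of the coarser covering~$\rho$ above the complex point $B=\{b_1,\dots,b_r\}$ of~$\sU_r$. A $G$\nobreakdash-cover $\pi:C\to\P^1_\C$ whose branch locus is contained in~$B$ restricts, over $X=\P^1_\C\setminus B$, to an irreducible $G$\nobreakdash-torsor $\pi^{-1}(X)\to X$, the irreducibility being that of~$C$; conversely, as in the proof that the regular inverse Galois problem has a positive answer over~$\bar\Q$, normalising~$\P^1_\C$ in the function field of such a torsor recovers a $G$\nobreakdash-cover. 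This sets up a bijection between the two, and Corollary~\ref{cor:retgcovers} identifies the torsors with $\ni^\ast_r(G)$ once a presentation of $\pi_1(X(\C),x)$ as in Proposition~\ref{prop:descpi1} is fixed. Demanding that~$B$ be the branch locus, and not merely contain it, forces ramification at each~$b_i$, i.e.\ $g_i\neq 1$; hence the fibre of~$\rho$ above~$B$ is precisely $\ni_r(G)$.

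Next I would track the extra datum recorded by the refinement~$\rho'$, namely the algebraic local monodromy morphism $B\to\sC_G$, which carries each~$b_i$ to the algebraic local monodromy of~$\pi$ at~$b_i$, an element of $\sC_G(\C)$. By Remark~\ref{rmks:zhat1zhat}~(i), under the identification $\sC_G(\C)=\Cl(G)$ afforded by the generator $\zeta_n=e^{2i\pi/n}$, this element is exactly the conjugacy class of the group element~$g_i$ attached to the torsor by Corollary~\ref{cor:retgcovers}. The compatibility to be checked here is that the loop~$\gamma_i$ around~$b_i$ underlying the topological presentation and the Kummer\nobreakdash-theoretic generator of inertia at~$b_i$ are matched consistently (orientation, the choice of~$\zeta_n$, and the left/right conventions of Corollary~\ref{cor:ret}); this is the content of Remark~\ref{rmks:zhat1zhat}~(i), so it suffices to cite it, taking care that the index~$i$ of the coordinate~$g_i$ is the one attached to the branch point~$b_i$.

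Finally I would intersect the two descriptions. The fibre of~$\rho'$ above the point of $\sV_r(\C)$ defined by~$B$ and $C=(C_1,\dots,C_r)$ consists of those $G$\nobreakdash-covers in the fibre of~$\rho$ above~$B$ whose algebraic local monodromy morphism coincides with~$C$, i.e.\ sends each~$b_i$ to~$C_i$. Translating through the bijection with $\ni_r(G)$ and the identification of the previous step, this is the set of $G$\nobreakdash-conjugacy classes of tuples $(g_1,\dots,g_r)$ with $g_1\cdots g_r=1$, generating~$G$, and satisfying $g_i\in C_i$ for all~$i$. Since each~$C_i$ is assumed nontrivial, the condition $g_i\in C_i$ already entails $g_i\neq 1$, so the extra membership condition cutting $\ni_r(G)$ out of $\ni^\ast_r(G)$ is automatic, and the fibre is exactly $\ni^C_r(G)$, as claimed.

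I expect the only genuine difficulty to be bookkeeping rather than geometry: ensuring that the coordinate produced by Corollary~\ref{cor:retgcovers} at index~$i$ really corresponds to the branch point~$b_i$ whose local monodromy~$\rho'$ records, and that the two notions of local monodromy agree compatibly with conjugation by~$G$ and with the procyclic structure of the inertia. All of this is supplied by the cited results, so the proof reduces to threading these identifications with the conventions aligned.
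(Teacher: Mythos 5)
Your outline captures the right combinatorial content --- the fibre of $\rho$ above~$B$ is $\ni_r(G)$ by Corollary~\ref{cor:retgcovers}, and imposing the local monodromy data $C=(C_1,\dots,C_r)$ cuts this down to $\ni^C_r(G)$, with Remark~\ref{rmks:zhat1zhat}~(i) aligning the algebraic and topological conventions. But there is a genuine gap at the decisive step, namely where you assert that ``the fibre of~$\rho'$ above the point of $\sV_r(\C)$ defined by~$B$ and~$C$ consists of those $G$\nobreakdash-covers in the fibre of~$\rho$ above~$B$ whose algebraic local monodromy morphism coincides with~$C$.'' This presupposes that the morphism~$\rho'$, evaluated on $\C$\nobreakdash-points, agrees with the set-theoretic map sending a $G$\nobreakdash-cover to its branch locus together with its algebraic local monodromy morphism. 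That agreement is not part of the definition of~$\rho'$: the morphism~$\rho'$ was constructed only from the image of the \emph{generic point} of each connected component~$I$ of~$\sH_{G,r}$ under that set-theoretic map (for $k=\Q(I)$), and then by extending the resulting rational map $I \dashrightarrow \sV_r$ to a morphism. A priori the value of~$\rho'$ at a given complex point is dictated by this extension process, not by the local monodromy of the particular cover sitting at that point.

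The paper's proof is devoted precisely to closing this gap: it observes that the two maps agree by construction for $G$\nobreakdash-covers whose branch locus is ``generic'' (i.e.\ lies over the generic point of~$\sU_r$), so the proposition holds for such~$B$, and then it propagates the conclusion to arbitrary~$B$ using the fact that~$\rho'$ is a morphism between \'etale coverings of~$\sU_r$ (so the description of its fibres, established over a dense locus, spreads out to the whole of each connected component of~$\sV_r$). Your closing paragraph worries about orientation and indexing conventions --- which are indeed settled by the cited remark --- but misses that the real difficulty is this generic-to-special specialisation argument. To repair the proposal you would need to either prove directly that the set-theoretic local monodromy map is ``algebraic'' (compatible with specialisation), or follow the paper and reduce to the generic branch locus and then invoke the rigidity of morphisms between \'etale coverings of a normal connected base.
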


\begin{proof}
This would be a direct consequence of the quoted references
if we knew that for a field~$k$ of characteristic~$0$ (here $k=\C$),
the map $\sH_{G,r}(k) \to \sV_r(k)$ induced by~$\rho'$
sends the isomorphism class of any $G$\nobreakdash-cover over~$k$ to
its algebraic local monodromy morphism.
By the definition of~$\rho'$, this is true
for those $G$\nobreakdash-covers whose branch locus is ``generic'',
in the sense that it is a point of $\sU_r(k)$ lying over the generic point of the variety~$\sU_r$
over~$\Q$ (i.e.\ when viewed as a morphism $\Spec(k) \to \sU_r$, its image is the generic point).
Thus, Proposition~\ref{prop:descrhop} holds when~$B$ is ``generic''.
As~$\rho'$ is a morphism between étale coverings of~$\sU_r$,
the validity
of Proposition~\ref{prop:descrhop} for arbitrary~$B$ follows.
\end{proof}

\subsubsection{Rational points of~$\sC_G$}

Viewing~$\bar \Q$ as a subfield of~$\C$,
Remark~\ref{rmks:zhat1zhat}~(i) also
induces an identification $\sC_G(\bar\Q)=\Cl(G)$.
Via this identification,
the natural action of $\Gal(\bar \Q/\Q)$ on~$\sC_G(\bar \Q)$ gives rise to the action
of $\Gal(\bar \Q/\Q)$
on~$\Cl(G)$ given by the formula $\sigma(g) = g^{-\chi(\sigma)}$ for $\sigma \in \Gal(\bar \Q/\Q)$
and $g \in G$, where $\chi:\Gal(\bar \Q/\Q)\twoheadrightarrow \hat\Z^*$ denotes the
cyclotomic character.
As a consequence,
the set $\sC_G(\Q)$ of
rational points of~$\sC_G$
gets identified with the set of \emph{rational} conjugacy classes of~$G$ in the following
sense:

\begin{defn}
\label{def:rationalconjugacy}
A conjugacy class~$C$ of a finite group~$G$ is \emph{rational} if
for every $g \in C$ and every integer $n \geq 1$
prime to the order of~$g$,
the element $g^n$ belongs to~$C$.
\end{defn}

\subsubsection{Rational points of~$\sV_r$}
\label{subsubsec:rpvr}

Here is a simple way to exhibit rational points of~$\sV_r$.
Let $b_1,\dots,b_r \in \P^1(\Q)$ be pairwise distinct.
Let $B=\{b_1,\dots,b_r\}$.
The rational points of~$\sV_r$ lying above the rational point of~$\sU_r$ defined by~$B$
are exactly the $r$\nobreakdash-tuples of rational points of~$\sC_G$, i.e.\ they are
the $r$\nobreakdash-tuples of rational conjugacy
classes of~$G$.

\subsubsection{Rational points of~$\sH_{G,r}$}

The morphism $\rho':\sH_{G,r} \to \sV_r$ is an étale covering. As such, it has a degree, which is
a locally constant function on~$\sV_r$.  This function is not constant on~$\sV_r$ in general---unlike
the degree of~$\rho$, which is constant, equal to the cardinality of~$\ni_r(G)$, as we
have seen in~\textsection\ref{subsec:hurwitzspaces}.
A connected component of~$\sV_r$ over which~$\rho'$ has degree~$1$,
i.e.\ over which~$\rho'$ restricts to an isomorphism,
is said to be \emph{rigid}.

It is a trivial but fruitful observation, which forms the basis of the rigidity method,
that the existence of a rational point
of a rigid connected component of~$\sV_r$ implies the existence
of a rational point of~$\sH_{G,r}$.

By Proposition~\ref{prop:descrhop}, the rigidity of a connected component
can be verified by computing
the set $\ni^C_r(G)$ associated with its complex points.
This motivates
the following definition:

\begin{defn}
An $r$\nobreakdash-tuple $C=(C_1,\dots,C_r)$ of nontrivial conjugacy classes of~$G$
is \emph{rigid} if 
the set $\ni^C_r(G)$ defined in Proposition~\ref{prop:descrhop}
has cardinality~$1$.
\end{defn}

\subsubsection{Summing up}

We thus arrive at a down-to-earth condition that implies that the
rational points of~$\sV_r$ constructed in~\textsection\ref{subsubsec:rpvr}
can be lifted to rational points of~$\sH_{G,r}$.

\begin{thm}
\label{th:rigidity}
Let~$G$ be a finite group with trivial centre.
Let $r\geq 1$ be an integer.
If there exists a rigid $r$\nobreakdash-tuple
 $C=(C_1,\dots,C_r)$ of nontrivial rational conjugacy classes of~$G$,
then for any pairwise distinct $b_1,\dots,b_r \in \P^1(\Q)$,
the $\Q$\nobreakdash-point of~$\sU_r$ defined by $\{b_1,\dots,b_r\}$ can be lifted to a
$\Q$\nobreakdash-point of~$\sH_{G,r}$.  In particular, the regular inverse Galois problem admits a positive
solution for~$G$ over~$\Q$.
\end{thm}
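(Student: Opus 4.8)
The plan is to exploit the factorisation $\rho=\nu\circ\rho'$ and to realise the desired $G$\nobreakdash-cover as a point in a fibre of~$\rho'$. Concretely, I would choose a $\Q$\nobreakdash-point $v$ of~$\sV_r$ recording the data $(\{b_1,\dots,b_r\},(C_1,\dots,C_r))$, and then show that the fibre $\rho'^{-1}(v)$ is a single $\Q$\nobreakdash-rational point of~$\sH_{G,r}$. The rationality of the $C_i$ will serve to produce~$v$, and the rigidity of the tuple to collapse the fibre to a single point.

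First I would construct~$v$. Since $b_1,\dots,b_r\in\P^1(\Q)$ are pairwise distinct, the set $B=\{b_1,\dots,b_r\}$ defines a $\Q$\nobreakdash-point of~$\sU_r$. Each $C_i$ being rational in the sense of Definition~\ref{def:rationalconjugacy} corresponds to a $\Q$\nobreakdash-point of~$\sC_G$, so that the assignment $b_i\mapsto C_i$ is a morphism $B\to\sC_G$ defined over~$\Q$. By the description of $\sV_r(\Q)$ recalled in~\textsection\ref{subsubsec:rpvr}, this datum is precisely a $\Q$\nobreakdash-point $v\in\sV_r(\Q)$ lying above~$B$.

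Next I would compute the fibre $\rho'^{-1}(v)$. As $\rho'$ is finite étale, this fibre is the spectrum of an étale $\Q$\nobreakdash-algebra, whose number of geometric points equals its degree over~$\Q$. Fixing an embedding $\bar\Q\hookrightarrow\C$ and base-changing $v$ to~$\C$, Proposition~\ref{prop:descrhop} identifies $\rho'^{-1}(v)(\C)$ with the set $\ni^C_r(G)$ attached to $C=(C_1,\dots,C_r)$, which by the rigidity hypothesis has cardinality~$1$. Hence $\rho'^{-1}(v)$ is a degree\nobreakdash-$1$ étale $\Q$\nobreakdash-algebra, i.e.\ $\rho'^{-1}(v)=\Spec(\Q)$, and its unique point is a $\Q$\nobreakdash-rational point of~$\sH_{G,r}$ mapping to~$v$, hence lying above the $\Q$\nobreakdash-point of~$\sU_r$ defined by~$B$.

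Finally, by Theorem~\ref{th:hurwitzspaces} a $\Q$\nobreakdash-point of~$\sH_{G,r}$ is the isomorphism class of a $G$\nobreakdash-cover over~$\Q$, that is, a regular Galois extension of $\Q(t)$ with group~$G$; this yields the positive answer to the regular inverse Galois problem for~$G$ over~$\Q$. The step carrying the real weight is the descent from a single geometric point to a $\Q$\nobreakdash-rational one: this is exactly where the two hypotheses combine, the rationality of the~$C_i$ making $v$ itself rational and the rigidity reducing the fibre to one point, so that $\Gal(\bar\Q/\Q)$ has nothing left to permute. All the geometric content has in fact already been absorbed into the construction of~$\rho'$ and into Proposition~\ref{prop:descrhop}, so that what remains is this short but conceptually decisive combination of rationality and rigidity.
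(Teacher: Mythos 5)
Your proposal is correct and follows essentially the same route as the paper: the rational conjugacy classes produce the $\Q$\nobreakdash-point $v\in\sV_r(\Q)$ above $B$ (\textsection\ref{subsubsec:rpvr}), and Proposition~\ref{prop:descrhop} together with the rigidity hypothesis shows that $\rho'$ has degree~$1$ over the relevant component, so $v$ lifts to a $\Q$\nobreakdash-point of~$\sH_{G,r}$. The only cosmetic difference is that you phrase the last step fibrewise (the fibre over~$v$ is a degree\nobreakdash-$1$ étale $\Q$\nobreakdash-algebra) whereas the paper phrases it componentwise (the component of~$\sV_r$ containing~$v$ is rigid, i.e.\ $\rho'$ restricts to an isomorphism over it); these are the same observation.
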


Theorem~\ref{th:rigidity} represents the base case of the rigidity method.  It admits many variants;
for instance, one can allow non-rational branch points. (Pro: this weakens
the condition that the prescribed conjugacy classes be rational; con: these conjugacy classes cannot be chosen independently of one another
any longer.) One could also work over a number field other than~$\Q$, which would simultaneously
weaken
 the conclusion of the theorem
and
 the rationality assumption on the conjugacy classes~$C_i$.

Even just the above base case is already unreasonably effective: the hypothesis of
Theorem~\ref{th:rigidity}
has been shown to be satisfied,
 with $r=3$, for at least~$10$ of the~$26$ sporadic simple groups,
including the monster (by Thompson) and the baby monster (by Malle and Matzat);
see \cite[Chapter~II, \textsection9]{mallematzat}.
As another example, the variant of Theorem~\ref{th:rigidity} in which $r=3$ and~$b_1$ is rational
while~$b_2$ and~$b_3$ are conjugate
quadratic points of the projective line can be applied to $\PSL_2(\Fp)$ for all primes~$p$ such that~$2$ or~$3$
is not a square modulo~$p$ (see \cite[\textsection8.3.3]{serretopics}),
thus recovering the positive answer
to the regular inverse
Galois problem over~$\Q$ for this infinite family of groups that had been obtained
by Shih using modular curves rather than the rigidity
method.

When the rigidity method is applicable, it is in principle possible to deduce from it an explicit polynomial
that realises the desired regular Galois extension of~$\Q(t)$
(see \cite[Chapter~II, \textsection9]{mallematzat}).  This has some limits in practice (e.g.\ for the
monster group, the degree of the polynomial cannot have less than~$20$ digits) but it leads to
interesting computational challenges (see e.g.\ \cite{barthwenz}).

\section{Grunwald's problem and the Brauer--Manin obstruction}
\label{sec:3}

\subsection{Looking for rational points}
\label{subsec:lookingforratp}

Despite its successes, the rigidity method,
discussed in~\textsection\ref{subsec:rigidity},
often fails to be applicable.
For instance, it fails, in general, for $p$\nobreakdash-groups;
indeed, the regular inverse Galois problem
is still open for most $p$\nobreakdash-groups over~$\Q$, even though the inverse Galois problem itself is known to have a positive
answer, over any number field,
for all $p$\nobreakdash-groups---and more generally, for all solvable groups, by a celebrated theorem of Shafarevich
(see \cite[Chapter~IX, \textsection6]{neukirchschmidtwingberg}).
In other words,
after learning, in~\textsection\ref{sec:1},
that the quotient variety $\A^n_k/G$ can fail to be rational or even retract rational,
we now find ourselves unable, at least in practice,
to salvage the Hilbert--Noether method by constructing rational curves in~$\A^n_k/G$
over which to apply Hilbert's irreducibility theorem,
as envisaged at the beginning of~\textsection\ref{subsec:rigstatement}.  This leads us to our next question:
letting
$Y \subseteq \A^n_k$ denote the locus where~$G$ acts freely,
can we directly construct rational points of $Y/G$ above which the fibre of the quotient map $Y\to Y/G$
is irreducible?
An approach put forward by Colliot-Thélène consists in noting that
Ekedahl's Theorem~\ref{th:ekedahl} reduces this
question, in full generality, to the problem of finding rational points on~$Y/G$
subject to certain weak approximation conditions.
In particular, if the variety~$Y/G$ satisfies weak approximation
off a finite set of places of~$k$, then the inverse Galois problem has a positive answer for~$G$ over~$k$.
Such a weak approximation property can be proved unconditionally
in some cases; for instance, under the assumptions of the following
remarkable theorem from~\cite{neukirch-solvable}, in which no place of~$k$ is excluded:

\begin{thm}[Neukirch]
\label{th:neukirch}
Let~$k$ be a number field.
Let~$G$ be a finite solvable group, acting linearly on~$\A^n_k$ for some $n \geq 1$.
Let $Y \subseteq \A^n_k$ be the locus where~$G$ acts freely.
Let $X=Y/G$.
Assume that the order of~$G$ and the number of roots of unity contained in~$k$ are coprime.
Then~$X$ satisfies weak approximation, i.e.\ the set $X(k)$ is dense in $X(k_\Omega)$.
In particular, Grunwald's problem admits a positive
answer for~$G$ over~$k$, for any finite subset $S \subset \Omega$.
\end{thm}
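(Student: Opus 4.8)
The plan is to separate a purely geometric reduction, valid for an arbitrary finite group, from the arithmetic core, where solvability and the coprimality hypothesis enter. Write $V=\A^n_k$ with its linear $G$\nobreakdash-action, so that $Y\subseteq V$ is the free locus and $\pi\colon Y\to X$ is the associated $G$\nobreakdash-torsor. The first step I would carry out is a twisting argument showing that weak approximation for~$X$ follows as soon as, for every finite subset $S\subset\Omega$, the restriction map
\[
\theta_S\colon H^1(k,G)\longrightarrow \prod_{v\in S}H^1(k_v,G)
\]
is surjective. Indeed, given local points $x_v\in X(k_v)$ for $v \in S$, the class $T_v=[\pi^{-1}(x_v)]\in H^1(k_v,G)$ of the fibre is locally constant in~$x_v$ by Lemma~\ref{lem:krasner}; realising the family $(T_v)_{v\in S}$ by a single global class $T\in H^1(k,G)$ and twisting~$\pi$ by~$T$ produces a torsor $\pi^{T}\colon{}^{T}Y\to X$ whose fibre over each~$x_v$ is the \emph{split} $G$\nobreakdash-torsor, so that~$x_v$ lifts to a $k_v$\nobreakdash-point of~${}^{T}Y$. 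The crucial observation is that~${}^{T}Y$ is again rational: twisting the linear representation~$V$ by~$T$ amounts to pushing~$T$ into $H^1(k,\GL_n)$, which is trivial by Hilbert's Theorem~90 (Speiser), whence ${}^{T}V\isoto V$ and~${}^{T}Y$ is an open subvariety of an affine space. As rational varieties satisfy weak approximation, I can approximate the lifted local points inside~${}^{T}Y(k)$ (choosing upstairs neighbourhoods mapping into the prescribed ones, $\pi^{T}$ being open) and project back along~$\pi^{T}$ to obtain the desired point of~$X(k)$.

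It then remains to establish the surjectivity of~$\theta_S$, and this is where I would use that~$G$ is solvable and of order prime to the number of roots of unity in~$k$. Note first that the latter hypothesis forces~$|G|$ to be odd, since~$-1\in k$ always contributes an even factor; thus solvability is in fact automatic by Feit--Thompson, and, more usefully, the ``special case'' of the Grunwald--Wang theorem never occurs over~$k$. I would argue by induction on~$|G|$, choosing a short exact sequence $1\to A\to G\xrightarrow{p} G'\to 1$ with~$A$ an elementary abelian $\ell$\nobreakdash-group (a minimal normal subgroup; here~$\ell$ is odd and $\zeta_\ell\notin k$). The base case $G=A$ abelian is precisely the surjectivity of $H^1(k,A)\to\prod_{v\in S}H^1(k_v,A)$, which holds by Grunwald--Wang~\cite{wanggrunwald} exactly because the special case is excluded.

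For the inductive step I would start from a family $(c_v)\in\prod_{v\in S}H^1(k_v,G)$, push it to $(p_*c_v)$ in degree~$G'$, and use the induction hypothesis to produce a global class $b\in H^1(k,G')$ restricting to~$p_*c_v$ at every $v\in S$. Lifting~$b$ to $H^1(k,G)$ is an embedding problem whose obstruction is a class $\Delta(b)\in H^2(k,A_b)$, where~$A_b$ is the Galois module~$A$ twisted by~$b$; this class restricts to~$0$ at each $v\in S$ because~$c_v$ is a local lift. The heart of the argument---and the step I expect to be the main obstacle---is to annihilate this global obstruction while respecting the local data: one must show that, after possibly modifying~$b$ at auxiliary places outside~$S$, the class~$\Delta(b)$ can be made to vanish. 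I would attack this through Poitou--Tate duality and the Chebotarev density theorem, the oddness of~$\ell$ and the absence of~$\zeta_\ell$ in~$k$ being exactly what guarantees that the relevant local and dual contributions do not obstruct. Once~$b$ is lifted to some $c_0\in H^1(k,G)$, the fibre of~$p_*$ over~$b$ is a torsor under $H^1(k,A_b)$, so I can correct~$c_0$ to match the prescribed~$c_v$ at~$S$ by one further application of the tame abelian Grunwald--Wang surjectivity for~$A_b$. This yields the surjectivity of~$\theta_S$, and hence weak approximation for~$X$.

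Finally, the assertion about Grunwald's problem follows formally. The torsor $\pi\colon Y\to X$ is weakly versal by the Hilbert~90 argument of Examples~\ref{ex:noetherisversal} and~\ref{ex:noetherisversalsln}, so weak approximation for~$X$ (taking $S_0=\varnothing$) lets me invoke Proposition~\ref{prop:versalrationalimpliesgrunwald} to conclude a positive answer for~$G$ over~$k$ for every finite $S\subset\Omega$, the irreducibility of the resulting global torsor being supplied by Ekedahl's Theorem~\ref{th:ekedahl} inside that proposition. To summarise, the geometric reduction is formal and robust, whereas the entire weight of the theorem rests on the local--global surjectivity of~$\theta_S$ for solvable~$G$, and within it on the vanishing of the embedding-problem obstruction in~$H^2$---the one place where solvability and the coprimality hypothesis are genuinely indispensable.
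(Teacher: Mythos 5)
First, a point of order: the paper itself does not prove this theorem but quotes it from \cite{neukirch-solvable}, so your proposal can only be measured against Neukirch's published argument. Your geometric reduction is correct and is the standard dictionary: weak approximation for $X=Y/G$ does follow from the surjectivity of $\theta_S\colon H^1(k,G)\to\prod_{v\in S}H^1(k_v,G)$ for all finite $S$, by twisting $\pi$ by a global class matching the local fibre classes and observing that ${}^{T}Y$ is a dense open subset of the twist of $\A^n_k$ by a cocycle that dies in $H^1(k,\GL_n)$ (Hilbert's Theorem~90), hence is $k$\nobreakdash-rational and satisfies weak approximation. Your aside that the coprimality hypothesis forces $|G|$ to be odd (since $-1\in k$) is also correct, and the skeleton of the arithmetic part --- induction on $|G|$ through a minimal normal elementary abelian $\ell$\nobreakdash-subgroup, the inductive step being an embedding problem with abelian kernel --- is exactly the architecture of Neukirch's proof.

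The gap is that the entire arithmetic content of the theorem is concentrated in the two steps you describe but do not carry out, and neither is routine. (a) Annihilating the obstruction $\Delta(b)\in H^2(k,A_b)$ \emph{while preserving the prescribed restrictions of~$b$ at~$S$} is precisely Neukirch's main theorem on the proper solvability of embedding problems with prescribed local data; ``Poitou--Tate plus Chebotarev'' names the tools but not the argument, and in particular you never exhibit where the hypothesis $\gcd(|G|,|\mu(k)|)=1$ enters (it is needed to control $\Sha^2(k,A_b)$, equivalently $\Sha^1$ of the dual module $\Hom(A_b,\mmu)$, after the auxiliary modification of~$b$). (b) The final correction step requires the surjectivity of $H^1(k,A_b)\to\prod_{v\in S}H^1(k_v,A_b)$ for the \emph{twisted} module~$A_b$, which is a genuine non-constant Galois module; this is not the classical Grunwald--Wang theorem (which concerns constant cyclic modules) but its generalisation to finite modules of order prime to $|\mu(k)|$, itself proved by the same duality arguments as~(a). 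Relatedly, the fibre of $H^1(k,G)\to H^1(k,G')$ over~$b$ is acted on transitively by $H^1(k,A_b)$ but is not in general a torsor under it when~$A$ is not central, so your ``one further application'' needs the transitivity statement rather than a bijection. None of this makes the approach wrong --- it is the right approach --- but the proof of the surjectivity of~$\theta_S$, which you yourself identify as carrying all the weight, is missing as written; for the actual argument see \cite{neukirch-solvable} or \cite[Chapter~IX]{neukirchschmidtwingberg}.
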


We recall that Grunwald's problem is Problem~\ref{pb:grunwald}.
Without the assumption on the order of~$G$, the conclusion of this theorem can fail,
as we have seen in Proposition~\ref{prop:wang}.

The validity of the weak approximation property is a problem of general interest that makes sense,
and has been studied, for arbitrary smooth varieties.  As we shall now explain, the tools that have been developed
for its study on arbitrary smooth varieties turn out to be useful also in the special case of the quotient~$Y/G$.

\subsection{Brauer--Manin obstruction}

A general mechanism, introduced by Manin~\cite{maninicm}
and now called the \emph{Brauer--Manin obstruction},
explains, in some cases, why
the weak approximation property fails for certain varieties over number fields.
Let us recall it briefly. (For details,
see \cite[\textsection13.3]{ctskobook}.)
Let~$X$ be a smooth variety over a number field~$k$.
Let $\Br_\nr(X)$ denote its unramified Brauer group
(see~\textsection\ref{subsubsec:counterexamplesoverc}).
We let $X(k_\Omega)=\prod_{v \in \Omega}X(k_v)$ and endow this set with the product of the $v$\nobreakdash-adic topologies.

The \emph{Brauer--Manin set}
$X(k_{\Omega})^{\Br_{\nr}(X)}$
is the set of all
families $(x_v)_{v \in \Omega} \in X(k_\Omega)$
such that $\sum_{v \in \Omega} \inv_v \beta(x_v)=0$  for all $\beta \in \Br_{\nr}(X)$.
Here $\beta(x_v) \in \Br(k_v)$ denotes the evaluation of~$\beta$ at~$x_v$,
and $\inv_v:\Br(k_v) \hookrightarrow \Q/\Z$ is the invariant map from local class
field theory.
(To make sense of the sum, one checks that only finitely many of its terms are nonzero.)
Manin's fundamental observation is that the image of the diagonal embedding $X(k) \hookrightarrow X(k_\Omega)$
is contained in the Brauer--Manin set, as a consequence
of
the reciprocity law
of global class field theory.
Thus, we have a sequence of
 inclusions
\begin{align}
X(k) \subseteq X(k_\Omega)^{\Br_\nr(X)} \subseteq X(k_\Omega)\rlap.
\end{align}
As $X(k_\Omega)^{\Br_\nr(X)}$ is a closed subset of $X(k_\Omega)$,
the
weak approximation property,
i.e.\ the density of~$X(k)$ in $X(k_\Omega)$, can hold only if $X(k_\Omega)^{\Br_\nr(X)}=X(k_\Omega)$.
When this last equality fails,
one says that there is a Brauer--Manin obstruction to weak approximation
on~$X$.

\subsection{Reinterpreting the Grunwald--Wang theorem}
\label{subsec:reinterpretinggrunwaldwang}

Let us come back to the variety $X=Y/G$ considered in~\textsection\ref{subsec:lookingforratp}:
$G$ is a subgroup of~$S_n$, which acts on~$\A^n_k$ by permuting the coordinates, and $Y \subseteq \A^n_k$ is the locus
where~$G$ acts freely.
As Grunwald's problem has a negative answer
for
 $G=\Z/8\Z$
and
 $k=\Q$
(see Proposition~\ref{prop:wang})
and as $Y\to X$ is a versal $G$\nobreakdash-torsor (see Example~\ref{ex:noetherisversal}),
the variety~$X$ cannot satisfy the weak approximation property in this case, according
to Proposition~\ref{prop:versalrationalimpliesgrunwald}.  Hence a natural question: is there a Brauer--Manin
obstruction to weak approximation on~$X$ when $G=\Z/8\Z$ and $k=\Q$?
The answer is yes
by the following theorem,
which states, more precisely,
that the weak approximation property on~$X$ is fully controlled by the Brauer--Manin obstruction
as soon as~$G$ is abelian.

\begin{thm}[Voskresenski\u{\i}, Sansuc]
\label{th:vosksansuc}
Let~$k$ be a number field.
Let~$G$ be a finite abelian group, acting linearly on~$\A^n_k$ for some $n \geq 1$.
Let $Y \subseteq \A^n_k$ be the locus where~$G$ acts freely.
Let $X=Y/G$.
The set $X(k)$ is dense in $X(k_\Omega)^{\Br_\nr(X)}$.
\end{thm}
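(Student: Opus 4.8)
The plan is to reduce the assertion to the analogous statement for an algebraic torus, where it is a celebrated theorem of Voskresenskiĭ and Sansuc. First I would construct a \emph{torus resolution} of~$G$. Let $\hat G$ denote the Cartier dual of~$G$, a finite Galois module. Choosing a surjection $L \twoheadrightarrow \hat G$ of Galois lattices with $L$ a permutation lattice and setting $M = \Ker(L \to \hat G)$, and then applying the anti-equivalence $D(-)=\Spec k[-]$ between Galois modules and diagonalisable $k$-groups to the exact sequence $0 \to M \to L \to \hat G \to 0$, I obtain an exact sequence $1 \to G \to P \to T \to 1$ of commutative $k$-groups in which $P = D(L)$ is a \emph{quasi-trivial} torus (a product of Weil restrictions $\Res_{L_i/k}\Gm$) and $T = D(M)$ is a torus. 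Since $P$ is quasi-trivial, $H^1(k',P)=0$ for every field extension $k'/k$, by Hilbert's Theorem~90 and Shapiro's lemma; in particular the $G$\nobreakdash-torsor $P \to T$ is versal, exactly as in Example~\ref{ex:noetherisversalsln}.

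Next I would show that the given $X = Y/G$ is stably birationally equivalent to~$T$ over~$k$, by the no-name mechanism of Remark~\ref{rmk:noname}. Let~$G$ act diagonally on $\A^n_k \times P$ (by the given linear action on the first factor, and by translation through $G \subset P$ on the second). As~$G$ acts freely on~$P$, it acts freely on $\A^n_k \times P$, so I may form $Z = (\A^n_k \times P)/G$. Projecting to $T = P/G$ exhibits~$Z$ as the vector bundle $P \times^G \A^n_k$ over~$T$, whence~$Z$ is birational to $T \times \A^n_k$. Projecting to $Y/G = X$ over the (dense) free locus exhibits~$Z$ as a torsor under~$P$ over~$X$; because~$P$ is quasi-trivial, this torsor has trivial class in $H^1(k(X),P)=0$, so~$Z$ is birational to $X \times P$, hence to $X \times \A^{\dim P}_k$ since a quasi-trivial torus is rational. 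Comparing the two descriptions shows that~$X$ and~$T$ are stably birationally equivalent.

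The arithmetic input is then the theorem of Voskresenskiĭ and Sansuc that, over a number field, the Brauer–Manin obstruction is the only obstruction to weak approximation on a torus: $T(k)$ is dense in $T(k_\Omega)^{\Br_\nr(T)}$ (see the account in~\cite{ctskobook}). It remains to transfer this along the stable birational equivalence. For this I would verify that the property $P(Z)$, namely \emph{``$Z(k)$ is dense in $Z(k_\Omega)^{\Br_\nr(Z)}$''}, is a stable birational invariant among smooth $k$-varieties. Passing from~$Z$ to $Z \times \A^1_k$ preserves it, since $\Br_\nr$ is unchanged and the fibre $\A^1_k$ satisfies weak approximation while the Brauer–Manin conditions pull back; and passing between smooth birational models (or to a dense open subset) preserves it as well, weak approximation together with the evaluation of $\Br_\nr$ being insensitive to codimension~$\geq 1$ modifications on smooth varieties, using the $v$\nobreakdash-adic density of the rational points of a dense open subset. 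Applying this invariance to the equivalence $X \sim T$ established above yields the statement for~$X$.

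The main obstacle is the torus case itself, which is the genuinely arithmetic heart of the argument and is where global class field theory and Poitou–Tate duality enter (through Voskresenskiĭ's computation of the weak approximation defect of~$T$ in terms of its character module and Sansuc's identification of that defect with $\Br_\nr(T)$); the reduction of the two preceding paragraphs is purely formal. If that input is granted, the remaining delicate point is the transfer step: one must check that the \emph{density} statement, and not merely the presence or absence of an obstruction, is preserved, which requires carefully comparing the adelic topologies on~$Z$, on a dense open subset, and on an affine-space bundle, and matching the Brauer–Manin pairings throughout.
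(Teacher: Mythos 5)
Your proof is correct, but it follows a genuinely different route from the one taken in \textsection\ref{subsec:sketch}. You resolve~$G$ by an exact sequence $1\to G\to P\to T\to 1$ with the \emph{middle} term~$P$ quasi-trivial (dualising a surjection from a permutation lattice onto~$\hat G$), use the no-name mechanism to show that $X$ is stably birationally equivalent to the torus $T=P/G$, and then quote the Voskresenski\u{\i}--Sansuc theorem for tori as the arithmetic input; this is essentially the classical argument from the literature alluded to just after the statement of Theorem~\ref{th:vosksansuc} (combining \cite[§7.2, Theorem]{voskbook} with \cite[Corollaire~8.13]{sansuclinear}). The proof given in \textsection\ref{subsec:sketch} instead uses the dual resolution $1\to G\to T\to Q\to 1$ with quasi-trivial \emph{quotient}~$Q$, builds the $T$\nobreakdash-torsor $W=(\SL_n\times T)/G\to X$, checks that all of its twists are rational, and feeds this into the descent theorem for tori (Theorem~\ref{th:descenttori}); the arithmetic input is thus descent rather than the toric case of Conjecture~\ref{conj:ct}, the point being to illustrate the method that extends to supersolvable groups (Theorems~\ref{th:descentsupersolvable} and~\ref{thm:hwzceh}), whereas your reduction to a torus is intrinsically limited to abelian~$G$. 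Two small caveats on your write-up: the versality of $P\to T$ is true but plays no role in your argument; and the stable birational invariance of the density of $X(k)$ in $X(k_\Omega)^{\Br_\nr(X)}$ should be invoked, as in the references cited in \textsection\ref{subsec:sketch}, for smooth rationally connected varieties---which all the varieties you use are, being unirational---rather than claimed for arbitrary smooth varieties.
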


Theorem~\ref{th:vosksansuc} can be found in the literature by combining
\cite[§7.2, Theorem]{voskbook}
with
\cite[Corollaire~8.13]{sansuclinear}.
We shall explain a proof of it
in~\textsection\ref{subsec:sketch} below.

Returning to an arbitrary finite group~$G$ and keeping~$k$, $Y$ and~$X$ as above,
it is a general fact
that the density of~$X(k)$ in~$X(k_\Omega)^{\Br_\nr(X)}$ implies the existence of a finite
subset $S_0 \subset \Omega$ such that~$X$ satisfies weak approximation off~$S_0$.
When such an~$S_0$ exists,
a refinement of the arguments underlying the proof of
Proposition~\ref{prop:versalrationalimpliesgrunwald} 
leads to the following conclusion
(a point of view advocated in~\cite{chernousov} and in~\cite[\textsection1.2]{harariquelques}):
fully solving Grunwald's problem for~$G$ over~$k$ is in fact equivalent to describing
the closure of~$X(k)$ inside $X(k_\Omega)$.
Thus, the Grunwald--Wang theorem, which indeed
fully solves Grunwald's problem when~$G$ is abelian, can now be
viewed, in retrospect, as the combination of Theorem~\ref{th:vosksansuc} with an explicit
computation of the Brauer--Manin set $X(k_\Omega)^{\Br_\nr(X)}$ when~$G$ is abelian.

\subsection{Rationally connected varieties}
\label{subsec:rcvar}

Whether or not the abelianness hypothesis on~$G$ can be removed from Theorem~\ref{th:vosksansuc} is a fundamental
open question.
When~$X$ is an arbitrary smooth variety possessing a rational point, the set~$X(k)$
cannot be expected to be dense
in $X(k_\Omega)^{\Br_{\nr}(X)}$ without strong assumptions on the geometry of~$X$;
for instance,
Lang's conjectures predict that
for~$d$ and~$N$ such that $d-2 \geq N \geq 4$,
this density should fail
for all smooth hypersurfaces of degree~$d$ in~$\P^N$
that have a rational point (see \cite[Appendix~A]{poonenvoloch}).
The variety $X=Y/G$ that we have been considering
in~\textsection\ref{subsec:lookingforratp}
and in~\textsection\ref{subsec:reinterpretinggrunwaldwang},
despite not being
geometrically rational for an arbitrary finite group~$G$
(see \textsection\ref{subsubsec:counterexamplesoverc}), still has a reasonably
tame geometry: it is unirational and therefore belongs to the class of \emph{rationally connected varieties} according
to the following definition\footnote{To be precise, Definition~\ref{def:rc} coincides with the standard definition
(found, e.g., in \cite[Chapter~IV]{kollarbook}) only when~$X$ is proper.}.

\begin{defn}[Campana, Kollár, Miyaoka, Mori]
\label{def:rc}
A smooth variety~$X$ over a field~$k$ is said to be
\emph{rationally connected} if for any algebraically closed field extension $K/k$
and any two general $K$\nobreakdash-points $x_0,x_1 \in X(K)$, there exists a rational map $f:\A^1_K \dashrightarrow X_K$ over~$K$,
defined in a neighbourhood of~$0$ and~$1$, such that $f(0)=x_0$ and $f(1)=x_1$.  (``General'' means that the set
of pairs $(x_0,x_1)$ satisfying the stated condition contains a dense Zariski open subset of $X(K) \times X(K)$.)
\end{defn}

Theorem~\ref{th:vosksansuc} conjecturally extends to all smooth rationally connected varieties:

\begin{conj}[Colliot-Thélène]
\label{conj:ct}
Let~$X$ be a smooth, rationally connected variety, over a number field~$k$.
The set $X(k)$ is dense in $X(k_\Omega)^{\Br_\nr(X)}$.
\end{conj}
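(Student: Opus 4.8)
The plan is to attack this conjecture by the \emph{descent method} of Colliot-Th\'el\`ene and Sansuc, supplemented by the fibration method; since the statement is open, what follows is a strategy rather than a complete proof. The reason descent is the natural tool here is that a smooth rationally connected variety~$X$ satisfies $\Br(X_{\bar k})=0$, so that $\Br_\nr(X)$ reduces to its algebraic part and is governed, through the Hochschild--Serre spectral sequence, by the finitely generated $\Gal(\bar k/k)$-module $\Pic(X_{\bar k})$. There is thus no transcendental Brauer class to worry about, and the entire obstruction is of the kind that descent theory is designed to capture.

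First I would use descent to trade the Brauer--Manin condition for an honest weak approximation question. Let~$S$ be the torus over~$k$ with character group $\Pic(X_{\bar k})$, and let $\sT \to X$ range over the universal torsors under~$S$. The fundamental theorem of descent asserts that $X(k)$ is dense in $X(k_\Omega)^{\Br_\nr(X)}$ as soon as every universal torsor that is everywhere locally soluble satisfies weak approximation. The gain is decisive at the level of invariants: one has $\Pic(\sT_{\bar k})=0$, so~$\sT$ has trivial algebraic Brauer group and carries no Brauer--Manin obstruction whatsoever. The price is that the dimension has gone up by $\dim S$, that one must handle a whole family of twisted forms of~$\sT$, and that~$\sT$, though still rationally connected, is no simpler than~$X$ in any structural sense.

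Second, I would try to prove plain weak approximation on each torsor~$\sT$ by the fibration method. The aim is to exhibit a dominant morphism $\sT \to B$ onto a rational base~$B$ whose generic fibre is a rationally connected variety for which the conjecture is already known---most naturally a homogeneous space under a connected linear algebraic group, where weak approximation has been established by Sansuc and Borovoi. Granting the conclusion on the fibres, one would assemble global points from local ones by combining Harari's formal lemma, which allows the Brauer--Manin obstruction on the total space to be pushed into the fibres lying over a Hilbertian subset of~$B$, with Ekedahl's Theorem~\ref{th:ekedahl}, which controls the arithmetic of the fibration along that subset.

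The hard part, and the reason the conjecture remains open, is precisely this last step in full generality. No structure theorem guarantees that an arbitrary smooth rationally connected variety, or the total space of a universal torsor over one, admits a fibration whose fibres fall into a class for which the statement has been proved; and the fibres that do arise are themselves rationally connected of smaller dimension, so the induction is circular unless one already possesses an unconditional base case in every dimension. Descent robustly reduces the problem to varieties with trivial algebraic Brauer group, but producing the rational points that weak approximation predicts on such a variety---with no remaining obstruction to exploit---is exactly the difficulty one set out to resolve, and at present it can be overcome only in low dimension and for homogeneous spaces.
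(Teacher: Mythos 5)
The statement you are asked to prove is an open conjecture: the paper states it as such, offers no proof, and only records partial results towards it (Theorems~\ref{th:vosksansuc} and~\ref{thm:hwzceh}, obtained by descent along torsors under tori and under finite supersolvable groups as in Conjecture~\ref{conj:descent}, Theorem~\ref{th:descenttori} and Theorem~\ref{th:descentsupersolvable}). Your proposal honestly stops short of claiming a proof, and the obstacle you identify at the end---no unconditional base case for the induction, and no structure theorem producing a usable fibration on an arbitrary rationally connected variety---is indeed the reason the conjecture is open. So there is no complete argument to compare against, and none was possible.

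There is, however, a concrete mathematical error in your first reduction. You assert that a smooth rationally connected variety satisfies $\Br(X_{\bar k})=0$, so that $\Br_\nr(X)$ is purely algebraic and is entirely captured by universal torsors under the N\'eron--Severi torus. This is false: rational connectedness kills $H^2(X,\sO_X)$ but not the torsion part of the geometric Brauer group. The Artin--Mumford threefold and the Saltman--Bogomolov counterexamples to Noether's problem over~$\C$ (see \textsection\ref{subsubsec:counterexamplesoverc} and Theorem~\ref{th:bogomolovformula}) are unirational, hence rationally connected, varieties with $\Br_\nr(X_{\bar k})\neq 0$. Consequently the Colliot-Th\'el\`ene--Sansuc descent theorem for universal torsors only identifies $\bigcup_\sigma f^\sigma(\sT^\sigma(k_\Omega))$ with the \emph{algebraic} Brauer--Manin set $X(k_\Omega)^{\Br_1(X)\cap\Br_\nr(X)}$, which can be strictly larger than $X(k_\Omega)^{\Br_\nr(X)}$ in the rationally connected setting; your first step would therefore prove density in the wrong set, or rather fail to account for the transcendental-looking classes coming from $\Br_\nr(X_{\bar k})^{\Gal(\bar k/k)}$ (whose relevance the paper discusses in \textsection\ref{subsec:determiningbm}). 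This is precisely why the descent statement the paper actually uses, Conjecture~\ref{conj:descent}, is formulated for torsors under arbitrary linear algebraic groups, including finite non-abelian ones, rather than for universal torsors under tori alone.
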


A number of known results towards this conjecture are listed in~\cite{wittenbergslc}.
Conjecture~\ref{conj:ct} would imply that all smooth rationally connected varieties satisfy
weak approximation off a finite set of places (see e.g.\ \cite[Remarks~2.4~(i)--(ii)]{wittenbergslc}).
In particular, it would imply a positive answer to the inverse Galois problem in general,
by Theorem~\ref{th:ekedahl} applied to the torsor of Example~\ref{ex:noetherisversal}.

A list of groups~$G$ of small order for which Conjecture~\ref{conj:ct} is still open
for the variety $X=Y/G$
appearing in Example~\ref{ex:noetherisversal}
can be found in~\cite{boughattasneftin}.

\subsection{Determining the Brauer--Manin set}
\label{subsec:determiningbm}

As discussed in~\textsection\ref{subsec:reinterpretinggrunwaldwang}, Conjecture~\ref{conj:ct} in the case $X=Y/G$
would, more precisely,
reduce
Grunwald's problem for~$G$ over~$k$
to the computation of the Brauer--Manin set
of~$X$.
Even partial knowledge of the Brauer--Manin set can lead
to concrete results, as the following theorem illustrates:

\begin{thm}[Lucchini Arteche \cite{lucchiniunramifiedbrauer}]
\label{th:lucchiniarteche}
Let~$k$ be a number field and~$G$ be a finite group acting linearly on~$\A^n_k$. Let $Y \subseteq \A^n_k$ be
the locus where~$G$
acts freely.  Let $X=Y/G$.  Let~$S_0$ be the set of finite places of~$k$ that divide the order of~$G$.
If~$X$ satisfies Conjecture~\ref{conj:ct}, then
Grunwald's problem admits a positive
answer for~$G$ over~$k$, for any finite subset $S \subset \Omega$ disjoint from~$S_0$.
\end{thm}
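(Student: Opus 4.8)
The plan is to deduce the theorem from Proposition~\ref{prop:versalrationalimpliesgrunwald} by showing that Conjecture~\ref{conj:ct} forces~$X$ to satisfy weak approximation off~$S_0$. The locus $Y\subseteq\A^n_k$ is a dense open subset of affine space, hence a smooth, quasi\nobreakdash-projective, (geometrically) irreducible variety on which~$G$ acts freely, and by Example~\ref{ex:noetherisversal} the quotient map $\pi\colon Y\to X$ is a versal, in particular weakly versal, $G$\nobreakdash-torsor; weak versality already guarantees $X(k)\neq\emptyset$, and I fix once and for all a point $x_0\in X(k)$. Thus condition~(ii) of Proposition~\ref{prop:versalrationalimpliesgrunwald} holds automatically, and it suffices to verify its condition~(i), namely that the diagonal image of $X(k)$ is dense in $\prod_{v\in\Omega\setminus S_0}X(k_v)$. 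Granting Conjecture~\ref{conj:ct}, i.e.\ the density of $X(k)$ in $X(k_\Omega)^{\Br_\nr(X)}$, this density off~$S_0$ follows as soon as the projection $X(k_\Omega)^{\Br_\nr(X)}\to\prod_{v\in\Omega\setminus S_0}X(k_v)$ is surjective, since a continuous surjection carries a dense subset to a dense subset.

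The surjectivity of this projection is where the argument concentrates, and it rests on the following local statement, which I regard as the heart of the matter: for every class $\beta\in\Br_\nr(X)$ and every place $v\notin S_0$, the evaluation map $x_v\mapsto\inv_v\beta(x_v)$ is constant on $X(k_v)$ (and, for finite~$v$, identically zero). Granting this, surjectivity is immediate. Given an arbitrary family $(x_v)_{v\notin S_0}$, I complete it to an adelic point by setting $x_v$ equal to the local component of~$x_0$ for $v\in S_0$. For each~$\beta$, constancy off~$S_0$ allows one to replace $\inv_v\beta(x_v)$ by $\inv_v\beta((x_0)_v)$ for every $v\notin S_0$, whence $\sum_{v\in\Omega}\inv_v\beta(x_v)=\sum_{v\in\Omega}\inv_v\beta((x_0)_v)=0$ by the global reciprocity law applied to the genuine rational point~$x_0$ (this is exactly the inclusion $X(k)\subseteq X(k_\Omega)^{\Br_\nr(X)}$). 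The completed family therefore lies in $X(k_\Omega)^{\Br_\nr(X)}$ and projects to $(x_v)_{v\notin S_0}$, as required.

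It remains to establish the local statement, and this is the step I expect to be the main obstacle, since it is precisely here that the hypothesis $v\nmid|G|$ (equivalently, for finite~$v$, that $v\notin S_0$) must be used. The idea is that at a finite place~$v$ whose residue characteristic does not divide~$|G|$, the group~$G$ acts tamely and the formation of $Y/G$ is compatible with reduction, so that a smooth compactification~$X^c$ of~$X$ should have good reduction at~$v$, with a smooth proper model $\mathcal{X}^c$ over the local ring~$\mathcal{O}_v$. One then has to check that an unramified class $\beta\in\Br_\nr(X)=\Br(X^c)$—which, by Bogomolov's description (cf.\ Theorem~\ref{th:bogomolovformula}), is $|G|$\nobreakdash-torsion on the geometric level and of group\nobreakdash-cohomological origin—extends to a class $\tilde\beta\in\Br(\mathcal{X}^c)$. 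Once such an extension is available, any point $x_v\in X(k_v)\subseteq X^c(k_v)$ specialises, by the valuative criterion of properness, to a point of $\mathcal{X}^c(\mathcal{O}_v)$, so that $\beta(x_v)=\tilde\beta(x_v)$ lies in $\Br(\mathcal{O}_v)=0$ and $\inv_v\beta(x_v)=0$; the archimedean places are handled separately and yield the required constancy there. I expect the genuine technical crux to be the integral extension of~$\beta$ and the good reduction of~$X^c$ away from~$|G|$. At the excluded places $v\mid|G|$ the reduction is wild and~$\beta$ may ramify, which is exactly the source of the obstruction already visible in Wang's example (Proposition~\ref{prop:wang}) for $G=\Z/8\Z$ at $v=2$, and explains why these places cannot be admitted into~$S$.
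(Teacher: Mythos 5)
Your global strategy coincides with the paper's: reduce, via Proposition~\ref{prop:versalrationalimpliesgrunwald} and the weak versality of $Y\to X$ (which holds for any faithful linear action by the Hilbert~90 argument, not only for the permutation action of Example~\ref{ex:noetherisversal}), to showing that Conjecture~\ref{conj:ct} forces weak approximation off~$S_0$, and derive the latter from the constancy, for $v\notin S_0$, of the evaluation maps $X(k_v)\to\Br(k_v)$ attached to unramified Brauer classes. The formal part of your argument---completing an off-$S_0$ family by the components of a fixed rational point and invoking reciprocity plus constancy---is correct, and it is exactly the reduction the paper describes before deferring to~\cite{lucchiniunramifiedbrauer}.

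The gap lies in your treatment of the local constancy statement, which is the entire content of the cited theorem, and your proposed mechanism for it does not work as stated. First, you cannot arrange for a smooth \emph{proper} model of~$X$ over~$\mathcal{O}_v$ at \emph{every} finite place $v\nmid|G|$: the free quotient $Y/G$ does spread out to a smooth scheme over $\mathcal{O}_k[1/|G|]$, but compactifying it and resolving the singularities of the boundary may force you to invert further, uncontrolled primes. At best this route gives constancy at all but finitely many unspecified places, hence weak approximation off \emph{some} finite set---enough for the inverse Galois problem via Theorem~\ref{th:ekedahl}, but not for Grunwald's problem for every~$S$ disjoint from the specific set~$S_0$, which is the whole point of the theorem. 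Second, even at a place of good reduction an unramified class $\beta\in\Br(X^c)$ need not extend to the Brauer group of the integral model: the obstruction is the residue of~$\beta$ along the special fibre, and its vanishing requires an argument you do not supply. Third, the constancy at real places is asserted but not proved, and it is not automatic for varieties that are not geometrically rational. The actual proof avoids integral models altogether: it uses the group-cohomological description of $\Br_\nr(X)$ (Bogomolov's formula, Theorem~\ref{th:bogomolovformula}, over~$\bar k$, together with Harari's formula for the algebraic part) and the fact that the difference $\beta(x_v)-\beta(x_v')$ is controlled by the classes of the fibre torsors in $H^1(k_v,G)$; for finite $v\nmid|G|$ these torsors are tamely ramified, so the image of $\Gal(\bar k_v/k_v)$ in~$G$ is generated by at most two elements subject to the tame relation (and is cyclic at archimedean places), and the relevant classes are shown to restrict trivially to such subgroups. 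It is this tameness mechanism, not good reduction of a compactification, that singles out exactly the places dividing~$|G|$.
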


The proof of Theorem~\ref{th:lucchiniarteche}  consists in studying
the evaluation of unramified Brauer classes at the local points of~$X$,
so as
to deduce, from the density of~$X(k)$ in~$X(k_\Omega)^{\Br_\nr(X)}$,
that~$X$ satisfies weak approximation off~$S_0$;
Proposition~\ref{prop:versalrationalimpliesgrunwald} then yields the desired statement.

The complete determination of~$X(k_\Omega)^{\Br_\nr(X)}$, for~$X$ as in the statement of Theorem~\ref{th:lucchiniarteche},
is in general a difficult task.
The case of a metabelian group~$G$ is investigated in \cite{demeioramified}.
In general, even the computation of~$\Br_\nr(X)$ itself is a delicate problem.
Over an algebraic closure~$\bar k$ of~$k$, one can apply Bogomolov's formula (Theorem~\ref{th:bogomolovformula}).
If the unramified Brauer group of~$X_{\bar k}$ turns out to be nontrivial, one has
to find out which classes of the finite group $\Br_\nr(X_{\bar k})$
are invariant under $\Gal(\bar k/k)$, and to determine
the image
of the natural map $\Br_{\nr}(X)\to \Br_{\nr}(X_{\bar k})^{\Gal(\bar k/k)}$; there is no general recipe for carrying this out.
The kernel of the latter map, on the other hand, is now well understood:
its quotient by the image
of the natural map $\Br(k) \to \Br_\nr(X)$
is finite and is described by a formula due to Harari \cite[Proposition~4]{harariquelques}.
In the most favourable cases, the combination of these formulae can lead to the conclusion
that the natural map $\Br(k) \to \Br_\nr(X)$ is onto, so that $X(k_\Omega)^{\Br_\nr(X)}=X(k_\Omega)$
and Grunwald's problem is then expected to have a positive solution for~$G$ over~$k$ with no restriction
on the finite subset $S \subset \Omega$. See \cite[Remarque~7]{demarchebrnr} for a concrete example.
In a different direction,
by adapting
the proof of Bogomolov's formula
to non-algebraically closed ground fields,
Colliot-Thélène~\cite[Corollaire~5.7]{ctbrnslng} showed that the natural map $\Br(k) \to \Br_\nr(X)$
is onto, and hence
that $X(k_\Omega)^{\Br_\nr(X)}=X(k_\Omega)$,
 whenever the order of~$G$ and the number of roots of unity contained in~$k$ are coprime,
which explains, a posteriori, why the Brauer--Manin obstruction plays no rôle in Neukirch's Theorem~\ref{th:neukirch}.

\subsection{Supersolvable groups}
\label{subsec:ss}

A finite group~$G$ is said to be \emph{supersolvable} if there exists a filtration
$1 = G_0 \subseteq G_1 \subseteq \dots \subseteq G_n = G$ such
that each~$G_i$ is a normal subgroup of~$G$ and each successive
quotient $G_{i+1}/G_i$ is cyclic.  All nilpotent groups
(in particular, all $p$\nobreakdash-groups) are supersolvable.
We proved, in \cite{hwzceh}, that Theorem~\ref{th:vosksansuc} generalises to such groups:

\begin{thm}[Harpaz, W.]
\label{thm:hwzceh}
Let~$k$ be a number field.
Let~$G$ be a finite supersolvable group, acting linearly on~$\A^n_k$ for some $n \geq 1$.
Let $Y \subseteq \A^n_k$ be the locus where~$G$ acts freely.
Let $X=Y/G$.
The set $X(k)$ is dense in $X(k_\Omega)^{\Br_\nr(X)}$.
\end{thm}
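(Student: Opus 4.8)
The plan is to induct on the order of~$G$, taking the abelian case (Theorem~\ref{th:vosksansuc}) as the base case and exploiting the defining filtration of a supersolvable group to lower the order at each step. By definition $G$ carries a chain $1=G_0\subseteq\cdots\subseteq G_m=G$ of normal subgroups with cyclic successive quotients; set $N=G_{m-1}$, so that~$N$ is a normal supersolvable subgroup of smaller order and $C:=G/N$ is cyclic. Factoring the quotient through~$Y/N$ produces a tower
\[
Y \longrightarrow W:=Y/N \longrightarrow X=Y/G ,
\]
in which~$G$ acts freely on~$Y$. A stabiliser computation shows that~$C$ then acts freely on~$W$ with $W/C=X$, so that $W\to X$ is a connected $C$\nobreakdash-torsor (Definition~\ref{def:torsor}). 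As the linear action of~$N$ on~$\A^n_k$ is the restriction of that of~$G$, the variety~$W$ is a dense open subset of~$Y_N/N$, where $Y_N\subseteq\A^n_k$ is the free locus of~$N$; this $Y_N/N$ is precisely the variety attached to the supersolvable group~$N$, to which the inductive hypothesis applies.

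First I would check that the desired property passes freely between the various models in play. The unramified Brauer group is a birational invariant of smooth varieties, and $X$, $W$ and $Y_N/N$ are all unirational, hence rationally connected (Definition~\ref{def:rc}); for rationally connected varieties one checks that density of the rational points in the Brauer--Manin set is compatible with passage to dense open subsets and to smooth proper birational models. Granting this, the inductive hypothesis yields Conjecture~\ref{conj:ct} for~$W$. The crux is then to descend the conclusion from~$W$ to the cyclic quotient $X=W/C$, and this is where the results of~\cite{hwzceh} are brought to bear.

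The mechanism I would use is descent along the finite torsor $W\to X$. For each $\tau\in H^1(k,C)$, twisting $W\to X$ yields a torsor $W^\tau\to X$ whose total space is a form of~$W$; after untwisting the ambient affine space by Hilbert's Theorem~90, each~$W^\tau$ is the free quotient of an affine space by an inner form of the linear action of~$N$, hence is itself covered by the inductive hypothesis. One has $X(k)=\bigcup_\tau f^\tau\big(W^\tau(k)\big)$, the union ranging over $\tau\in H^1(k,C)$, and the descent formalism for torsors under the finite abelian group~$C$, in the refined form established in~\cite{hwzceh}, should reassemble the local--global information on the~$W^\tau$ into the statement that $X(k)$ is dense in $X(k_\Omega)^{\Br_\nr(X)}$. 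Making this reassembly work requires a careful comparison of Brauer groups: via the Hochschild--Serre spectral sequence for $W\to X$ one relates $\Br_\nr(X)$ to the $C$\nobreakdash-invariant part of $\Br_\nr(W)$ together with the contribution of $H^1(C,\,\cdot\,)$, and via Bogomolov's formula (Theorem~\ref{th:bogomolovformula}) over~$\bar k$, followed by a Galois-descent analysis, one controls which classes actually survive over~$k$. The cyclic layer~$C$ is, at bottom, handled by exactly the abelian input of Theorem~\ref{th:vosksansuc}.

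The hard part will be controlling the descent at the places dividing~$|G|$, and this is the whole point of the theorem. Note that Proposition~\ref{prop:versalrationalimpliesgrunwald} and Lucchini Arteche's Theorem~\ref{th:lucchiniarteche} already permit one to excise a finite set~$S_0$ of places, in particular those dividing~$|G|$; the content of the present statement is that \emph{no} place need be excluded, precisely the phenomenon that fails for~$\Z/8\Z$ (Proposition~\ref{prop:wang}). Consequently the delicate step is to show that the finite-descent, or étale-Brauer, obstruction governing $X(k)=\bigcup_\tau f^\tau(W^\tau(k))$ collapses, for these quotient varieties and at every place, to the unramified-Brauer obstruction alone, so that no extra obstruction hides at the primes dividing~$|G|$, where $\Br_\nr(X)$ need not be generated by~$\Br(k)$ (in contrast to the coprime situation of Neukirch's Theorem~\ref{th:neukirch}). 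Since the obstruction at such primes is ultimately governed by the Galois action on the relevant roots of unity---the same arithmetic that underlies the~$\Z/8\Z$ example---establishing this collapse uniformly over the infinitely many twists~$\tau$, with full control at the bad places, is the technical heart of~\cite{hwzceh} and the main obstacle of the proof.
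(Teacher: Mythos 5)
Your route is genuinely different from the paper's, and it has a gap. The paper deduces Theorem~\ref{thm:hwzceh} in one stroke from Theorem~\ref{th:descentsupersolvable}: after replacing $\A^n_k/G$ by the stably birationally equivalent model $\SL_n/G$ (Remark~\ref{rmk:noname}, plus the stable birational invariance of the conclusion for smooth rationally connected varieties), one applies supersolvable descent to the single $G$\nobreakdash-torsor $\SL_n \to \SL_n/G$; every twist of $\SL_n$ by a $G$\nobreakdash-torsor over~$k$ is an $\SL_n$\nobreakdash-torsor for left multiplication, hence trivial by Hilbert's Theorem~90, hence rational, so the hypothesis of Conjecture~\ref{conj:descent} is verified for free and no induction, Hochschild--Serre analysis or Bogomolov computation is needed at this level. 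All the hard work is encapsulated in the black box Theorem~\ref{th:descentsupersolvable}.

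Your proposal instead peels off one cyclic layer at a time and descends along the finite torsor $W=Y/N\to X$. Two steps are not justified. First, the descent statement you invoke for the cyclic group $C=G/N$ --- that density of $W^\tau(k)$ in $W^\tau(k_\Omega)^{\Br_\nr(W^\tau)}$ for all twists implies density of $X(k)$ in $X(k_\Omega)^{\Br_\nr(X)}$ --- is precisely Conjecture~\ref{conj:descent} for a finite cyclic group. It is \emph{not} "the abelian input of Theorem~\ref{th:vosksansuc}": that theorem concerns the specific varieties $\A^n_k/G$, not descent over an arbitrary rationally connected base, and classical descent for finite groups only controls the smaller \'etale--Brauer set. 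Closing that gap is the content of Theorem~\ref{th:descentsupersolvable} (via embedding $C$ into a torus, as in \textsection\ref{subsec:sketch}), so at this point you are assuming a result essentially equivalent in depth to the one you are proving. Second, even granting cyclic descent, you must check Conjecture~\ref{conj:ct} for \emph{every} twist $W^\tau$. Your claim that $W^\tau$ is again the free quotient of an affine space by an inner form of the linear action of~$N$ does not hold as stated: twisting by a torsor under $C=G/N$ produces a form of $W$ that is not presented as a quotient of affine space by a \emph{constant} supersolvable group acting linearly, so the inductive hypothesis in the form you set up does not apply to it. This is exactly the difficulty that the paper's switch to the model $\SL_n\to\SL_n/G$ is designed to eliminate, since there all twists become trivial. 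Your inductive scheme is closer in spirit to how Theorem~\ref{th:descentsupersolvable} is itself proved in the cited references, but as a self-contained derivation of Theorem~\ref{thm:hwzceh} it leaves the two decisive steps unproved.
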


A positive answer to the inverse Galois problem
for supersolvable groups results from this
(via Theorem~\ref{th:ekedahl})
but had already been established---more generally, for solvable groups---by Shafarevich, as mentioned in~\textsection\ref{subsec:lookingforratp}.
As discussed in \textsection\textsection\ref{subsec:reinterpretinggrunwaldwang}--\ref{subsec:determiningbm},
Theorem~\ref{thm:hwzceh} refines this positive
answer by bringing information about Grunwald's problem.

It may be that the strategy underlying the proof of Theorem~\ref{thm:hwzceh} can be extended to all solvable groups.
It will not, however, be of any help with
non-abelian simple groups; in fact,
to this day, no approach is known towards
 Grunwald's problem for non-abelian simple groups (with the exception of~$A_5$ and~$\PSL_2(\F_7)$,
for which Noether's problem itself has a positive answer; see Example~\ref{ex:noethera5}, \cite{mestrepsl2f7} and
Corollary~\ref{cor:noetherimpliesgrunwald}).

\subsection{Descent in a nutshell}

Theorem~\ref{thm:hwzceh} can be seen as a direct application of
a general tool that is useful for proving cases
of Conjecture~\ref{conj:ct}, the so-called ``descent'' method.
We now briefly discuss it.  We shall illustrate it by proving
Theorem~\ref{th:vosksansuc} in~\textsection\ref{subsec:sketch}, before indicating its applicability
to other variants of the inverse Galois problem in~\textsection\ref{subsec:prescribednorms}.

To get started, we need to extend the notion of $G$\nobreakdash-torsor from the case where~$G$
is a finite abstract group (Definition~\ref{def:torsor})
to the case where~$G$ is an algebraic group (i.e.\ a group scheme over a field,
possibly disconnected or of positive dimension).

\begin{defn}
\label{def:torsoralg}
Let $\pi:Y\to X$ be a surjective morphism between smooth varieties over a field~$k$ of characteristic~$0$,
with algebraic closure~$\bar k$.
Let~$G$ be an algebraic group over~$k$, acting on~$Y$ in such a way that~$\pi$ is $G$\nobreakdash-equivariant
(for the trivial action of~$G$
on~$X$).
We say that~$\pi$ is a \emph{$G$\nobreakdash-torsor},
or that~$Y$ is a $G$\nobreakdash-torsor over~$X$,
if~$\pi$ is smooth and~$G(\bar k)$
acts simply transitively on the fibres of the map $Y(\bar k)\to X(\bar k)$ induced by~$\pi$.
\end{defn}

Unless otherwise specified, we now let~$k$ denote an arbitrary field of characteristic~$0$.
(For the correct definition of a torsor without this assumption,
see \cite[Definition~2.2.1]{skobook}.)
As usual, by a \emph{$G$\nobreakdash-torsor over~$k$} we shall mean a $G$\nobreakdash-torsor over~$\Spec(k)$.
When $\pi:Y\to X$ is a $G$\nobreakdash-torsor, the morphism~$\pi$ identifies~$X$ with the categorical
quotient~$Y/G$ (see \cite[Proposition~0.2, Proposition~0.1]{mumfordgit}).

\begin{example}
\label{ex:hilbert90}
Hilbert's Theorem~90, which we encountered in Example~\ref{ex:noetherisversal},
is equivalent to the following statement: for any integer $n\geq 1$,
any $\GL_n$\nobreakdash-torsor over~$k$ is isomorphic to~$\GL_n$.
As an easy consequence, any $\SL_n$\nobreakdash-torsor over~$k$ is isomorphic to~$\SL_n$.
\end{example}

\begin{defn}
Let~$X$ be a smooth variety over~$k$. Let~$G$ be an algebraic group over~$k$.
Let $\pi:Y\to X$ be a $G$\nobreakdash-torsor.
The \emph{twist} of~$Y$ by a $G$\nobreakdash-torsor~$P$ over~$k$ is the quotient
\begin{align*}
_PY=(P \times Y)/G
\end{align*}
of~$P \times Y$
by the diagonal action of~$G$, endowed with the natural morphism $_P\pi:{}_PY\to X$ induced by
the second projection $P\times Y\to Y$ and the identification $Y/G=X$.  (We are not claiming that $_P\pi$ is a
$G$\nobreakdash-torsor.  This is true when~$G$ is commutative, not in general.  This point, which will be irrelevant
for us, is discussed in \cite[p.~21]{skobook}.)
\end{defn}

The gist of the descent method is summarised in the following conjecture.

\begin{conj}
\label{conj:descent}
Let~$X$ be a smooth variety over a number field~$k$.
Let~$G$ be a
linear algebraic group over~$k$.
Let $\pi:Y\to X$ be a $G$\nobreakdash-torsor,
with~$Y$ rationally connected.
Assume that for every twist~$Y'$ of~$Y$ by a $G$\nobreakdash-torsor over~$k$,
the set $Y'(k)$ is dense in $Y'(k_\Omega)^{\Br_{\nr}(Y')}$.
Then the set $X(k)$ is dense in $X(k_\Omega)^{\Br_\nr(X)}$.
\end{conj}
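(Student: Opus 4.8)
The plan is to run the \emph{descent} argument, reducing the approximation problem on~$X$ to the approximation problems on the twists~${}_PY$, which the hypothesis allows us to treat as solved. For a place~$v$, write $\partial_v\colon X(k_v) \to H^1(k_v,G)$ for the map sending a local point~$x_v$ to the class of the $G$\nobreakdash-torsor $\pi^{-1}(x_v)$ over~$k_v$; the key elementary fact is that, for a $G$\nobreakdash-torsor~$P$ over~$k$ of class $[P] \in H^1(k,G)$, a point~$x_v$ lifts to a $k_v$\nobreakdash-point of the twist~${}_PY$ exactly when $\partial_v(x_v)$ and the localisation $\mathrm{loc}_v([P])$ agree. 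The whole method rests on a \emph{partition formula}
\[
X(k_\Omega)^{\Br_\nr(X)} = \bigcup_{[P] \in H^1(k,G)} {}_P\pi\Big({}_PY(k_\Omega)^{\Br_\nr({}_PY)}\Big),
\]
whose inclusion ``$\subseteq$'' is the crucial input; granting it, the conclusion follows in a few lines, as I now explain.

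Given a point $(x_v) \in X(k_\Omega)^{\Br_\nr(X)}$ that we wish to approximate, the partition formula furnishes a $G$\nobreakdash-torsor~$P$ over~$k$ together with a lift $(y_v) \in {}_PY(k_\Omega)^{\Br_\nr({}_PY)}$ satisfying ${}_P\pi\big((y_v)\big) = (x_v)$. The hypothesis of the conjecture, applied to the twist $Y' = {}_PY$, then produces a rational point $y \in {}_PY(k)$ that is $v$\nobreakdash-adically close to~$y_v$ at the finitely many places where approximation is demanded. Since ${}_P\pi$ is a morphism, it is continuous at each such place, so $x = {}_P\pi(y) \in X(k)$ is correspondingly close to~$x_v$ there. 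This exhibits the sought rational point and establishes, modulo the partition formula, the density of~$X(k)$ in~$X(k_\Omega)^{\Br_\nr(X)}$. The reverse inclusion ``$\supseteq$'' of that formula, which guarantees that such lifts do lie over genuine Brauer--Manin points of~$X$, is comparatively routine: it amounts to the functoriality $\Br_\nr(X) \to \Br_\nr({}_PY)$ of the unramified Brauer group under~${}_P\pi$, combined with the reciprocity law already used to define the Brauer--Manin set.

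The first half of the inclusion ``$\subseteq$'' is soft: for fixed $(x_v)$ the classes $\partial_v(x_v)$ are unramified for all but finitely many~$v$, so only finitely many places impose genuine constraints, and one is reduced to matching a prescribed finite collection of local classes by a single global one. The hard part will be to show that the Brauer--Manin condition on $(x_v)$ forces the family $\big(\partial_v(x_v)\big)_v$ to be \emph{globally coherent}, that is, to arise as the localisation of one class $[P] \in H^1(k,G)$. This is an arithmetic-duality statement: one must realise the unramified Brauer classes on~$X$ that ``see'' the torsor as pullbacks, through~$\pi$, of dual arithmetic data attached to~$G$, and then invoke Poitou--Tate duality to convert the vanishing of the Brauer--Manin sum into the coherence of the local classes. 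For~$G$ a torus, or more generally connected, this is the classical descent of Colliot-Thélène and Sansuc and its extensions, rational connectedness of~$Y$ ensuring the finiteness of the geometric Brauer group that makes the duality effective; for a general, possibly disconnected and non-abelian, linear group~$G$ one needs the non-abelian form of this duality. That duality---together with the delicate bookkeeping of identifying precisely which subgroup of $\Br_\nr(X)$ governs the descent and of verifying that the chosen lifts $(y_v)$ actually land in $\Br_\nr({}_PY)$ rather than merely in~${}_PY(k_\Omega)$---is where essentially all the difficulty of the conjecture resides.
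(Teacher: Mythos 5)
There is a fundamental issue here: the statement you are proving is a \emph{conjecture}, and the paper offers no proof of it. It is open in general; the text only records that it is known when $G$ is a torus (Colliot-Thélène--Sansuc, Harpaz--W.), when $G$ is connected (Nguy\~{\^e}n), and when $G$ is finite and supersolvable (Theorem~\ref{th:descentsupersolvable}). Your proposal correctly reproduces the shape of the descent strategy --- the partition formula
\[
X(k_\Omega)^{\Br_\nr(X)} \subseteq \bigcup_{[P]} {}_P\pi\bigl({}_PY(k_\Omega)^{\Br_\nr({}_PY)}\bigr)
\]
followed by approximation on the relevant twist and pushing down by the continuous map ${}_P\pi$ --- but the inclusion ``$\subseteq$'' that you defer is not a lemma to be granted: it \emph{is} the content of the conjecture. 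Everything after it is, as you say, a few lines; so a proposal that assumes it has not proved anything. In particular, your plan to establish it by ``non-abelian Poitou--Tate duality'' does not correspond to an existing tool. For $G$ of multiplicative type one has genuine duality theorems and the Colliot-Thélène--Sansuc machinery; for a general linear algebraic group $G$, possibly disconnected and non-abelian, $H^1(k,G)$ is only a pointed set, there is no exact sequence to dualise, and no mechanism is known that converts the Brauer--Manin condition on $(x_v)$ into the global coherence of the classes $\partial_v(x_v)$. The known partial results (e.g.\ the supersolvable case) are obtained by different, more hands-on arguments (iterated descent along a filtration, fibration methods), not by a duality statement.

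Two further technical points in your sketch would need repair even as a conditional argument. First, for non-commutative $G$ the twisted map ${}_P\pi:{}_PY\to X$ is not a $G$\nobreakdash-torsor but a torsor under the inner form ${}_PG$ (the paper flags this), so the lifting criterion ``$x_v\in{}_P\pi({}_PY(k_v))$ iff $\partial_v(x_v)=\mathrm{loc}_v([P])$'' must be phrased with care, the classes living in varying twisted cohomology sets. Second, the descent formula in the literature is proved relative to a specific subgroup of $\Br(X)$ attached to the torsor (its ``type''), not to all of $\Br_\nr(X)$; identifying that subgroup and showing the lifted adelic point lands in ${}_PY(k_\Omega)^{\Br_\nr({}_PY)}$ is part of what makes even the established cases nontrivial. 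In short: your outline is a faithful description of the strategy one would like to run, but it is not a proof, and no proof of the general statement exists.
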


Conjecture~\ref{conj:descent}, and the first significant cases in which it was established,
appeared in a series of works by Colliot-Thélène and Sansuc.  See \cite{skobook} for an account.
We content ourselves with mentioning the following positive result\footnote{More generally,
after the writing of these notes,
Conjecture~\ref{conj:descent} was proved
by Nguy\~{\^e}n to be true as soon as~$G$ is connected; see~\cite{nguyendescent},
which builds on~\cite[\textsection2]{hwzceh} and on Borovoi's abelianisation technique.}, which in
this form can be found
in \cite[Corollaire~2.2]{hwzceh}.

\begin{thm}[Colliot-Thélène, Sansuc, Harpaz, W.]
\label{th:descenttori}
Conjecture~\ref{conj:descent} holds true if~$G$ is a torus (i.e.\ an algebraic group such that
$G_{\bar k} \simeq \Gmbark \times \dots \times \Gmbark$).
\end{thm}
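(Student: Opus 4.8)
The plan is to run the descent method of Colliot-Thélène and Sansuc, exploiting the especially clean duality theory available for the torus $T := G$. The backbone is the partition of $k$-points according to the class of the fibre. For $x \in X(k)$ the fibre $\pi^{-1}(x)$ is a $T$-torsor over~$k$, of some class $[P] \in H^1(k,T)$; twisting $Y$ by $P$ trivialises this fibre, so that $x$ lifts to a $k$-point of $_PY$. Conversely $_P\pi$ maps $_PY(k)$ into $X(k)$ for every $T$-torsor $P$ over~$k$. Hence $X(k) = \bigcup_{[P]} {}_P\pi\big({}_PY(k)\big)$, the union being taken over $H^1(k,T)$, and the problem reduces to producing, near a prescribed $(x_v) \in X(k_\Omega)^{\Br_\nr(X)}$, one twist that carries a rational point descending to an approximation of $(x_v)$. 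Note that every twist $_PY$ is geometrically isomorphic to~$Y$, hence again rationally connected, so that the hypothesis of Conjecture~\ref{conj:descent} applies to it.

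First I would set up the local picture. For each place~$v$, the map $X(k_v) \to H^1(k_v,T)$ sending $x_v$ to the class of $\pi^{-1}(x_v)$ is locally constant---a Krasner-type statement in the spirit of Lemma~\ref{lem:krasner}, valid because $\pi$ is smooth---and the target $H^1(k_v,T)$ is finite. Thus $X(k_v)$ breaks up into finitely many open and closed pieces indexed by the fibre class, the piece attached to $\xi$ being precisely the image of $_{P_v}\pi$ for a local torsor $P_v$ of the matching class. Consequently every adelic point lifts, place by place, to suitable twists; the only remaining issue is the global coherence of these local classes.

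The global coherence is controlled by duality, and this is where the Brauer--Manin hypothesis enters. The classes in $\Br_\nr(X)$ coming from the torsor are obtained by cupping the class of $Y$ in $H^1(X,T)$ with characters $\chi \in H^1(k,\widehat{T})$ via the pairing $T \times \widehat{T} \to \Gm$; evaluated at $(x_v)$, such a class contributes $\sum_v \langle [\pi^{-1}(x_v)], \chi\rangle_v$, where $\langle\,,\rangle_v$ denotes the local Tate pairing $H^1(k_v,T) \times H^1(k_v,\widehat{T}) \to \Br(k_v) \xrightarrow{\inv_v} \Q/\Z$. The condition $(x_v) \in X(k_\Omega)^{\Br_\nr(X)}$ therefore forces all these sums to vanish, and this is exactly the input required, through the Poitou--Tate exact sequence for~$T$, to manufacture a global class $[P] \in H^1(k,T)$ whose localisation agrees with $[\pi^{-1}(x_v)]$ at each place of a prescribed finite set~$S$ (and harmlessly elsewhere). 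As the fibre class is locally constant, agreement at $x_v$ propagates through a neighbourhood, so no approximation is sacrificed; I would then lift $(x_v)_{v}$ to an adelic point $(y_v)$ of $_PY$.

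The hard part will be the final compatibility: ensuring that $(y_v)$ can be chosen inside $_PY(k_\Omega)^{\Br_\nr({}_PY)}$, that is, transporting the Brauer--Manin constraint from~$X$ up to the twist. This is the technical heart of descent. I would use that $_PY$, being rationally connected, has finite geometric Brauer group and finitely generated geometric Picard group $\Pic(Y_{\bar k})$ (it is geometrically isomorphic to~$Y$), which bounds $\Br_\nr({}_PY)$; the Brauer classes on $_PY$ not pulled back from~$X$ can then be neutralised by adjusting each $y_v$ within its fibre---itself a $T$-torsor over~$k_v$---solving the resulting local conditions by one more appeal to local duality. Granting this, the hypothesis of Conjecture~\ref{conj:descent} applied to $_PY$ produces $y \in {}_PY(k)$ approximating $(y_v)$ on~$S$, and $\pi(y) \in X(k)$ then approximates $(x_v)$ on~$S$, which is the sought density. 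The subtle bookkeeping throughout lies in pinning down the torsor-type subgroup of $\Br_\nr(X)$ and the precise shape of the duality yielding~$[P]$; it is exactly at this juncture that the hypothesis that~$G$ is a torus, rather than an arbitrary linear algebraic group, is decisive.
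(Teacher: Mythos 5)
The paper does not actually prove Theorem~\ref{th:descenttori}: it is quoted from the literature (the works of Colliot-Thélène and Sansuc, see \cite{skobook}, and, in the form stated with~$\Br_\nr$, \cite[Corollaire~2.2]{hwzceh}), so there is no internal proof to compare yours with. Your sketch does follow the strategy of those references --- partition of $X(k)$ by the class of the fibre, local constancy of $x_v \mapsto [\pi^{-1}(x_v)]$, and the Poitou--Tate exact sequence for the torus~$T$ to globalise the local fibre classes out of the orthogonality furnished by the Brauer--Manin hypothesis --- and these ingredients are all correct and correctly assembled. Two points, however, are genuine gaps rather than routine verifications. First, to apply the hypothesis $(x_v) \in X(k_\Omega)^{\Br_\nr(X)}$ you must know that the cup-product classes $[Y] \cup \chi$, for $\chi \in H^1(k,\widehat{T})$, actually lie in $\Br_\nr(X)$ and not merely in $\Br(X)$; since $X$ is an open variety this is not automatic, and one has to either extend the torsor over a smooth compactification or relate these ``descent classes'' to the unramified Brauer group via the type of the torsor. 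Without this, the vanishing of $\sum_v \inv_v\bigl\langle [\pi^{-1}(x_v)], \chi\bigr\rangle$ does not follow from your hypothesis.

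Second, and more seriously, the step you yourself flag as ``the hard part'' --- producing $(y_v) \in {}_PY(k_\Omega)^{\Br_\nr({}_PY)}$ above $(x_v)$ --- cannot be handled by ``adjusting each $y_v$ within its fibre'' and one more appeal to local duality. The classes of $\Br_\nr({}_PY)$ that are not pulled back from~$X$ impose a new \emph{global} reciprocity condition on the family $(y_v)$, and neutralising them requires the exact sequence of Colliot-Thélène and Sansuc comparing $\Br_1$ of the total space and of the base of a torsor under a torus, together with a second application of Poitou--Tate duality to re-choose the lifts coherently; moreover, for a rationally connected~$Y$ the group $\Br_\nr({}_PY)$ may contain transcendental classes, and handling those is precisely the additional content of \cite[Corollaire~2.2]{hwzceh} beyond the classical theory. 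As it stands, your argument proves the theorem only modulo this lifting statement, which is where essentially all of the work in the cited proof is concentrated.
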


\subsection{Sketch of proof of Theorem~\ref{th:vosksansuc}}
\label{subsec:sketch}

We shall deduce Theorem~\ref{th:vosksansuc} from
Theorem~\ref{th:descenttori}.  (More precisely, descent will be applied
to a geometrically rational variety;
Theorem~\ref{th:descenttori}
in the case of such varieties is due to Colliot-Thélène and Sansuc alone, see~\cite{ctsandescent2}.)

Before starting the proof of Theorem~\ref{th:vosksansuc}, let us slightly change notation.
We now fix an embedding $G \hookrightarrow \SL_n(k)$ for some $n\geq 1$, let~$Y$
be the algebraic group~$\SL_n$ over~$k$, and let~$G$ act on~$Y$ by right multiplication.  As the resulting variety $X=Y/G$
is stably birationally equivalent to the variety~$X$ of Theorem~\ref{th:vosksansuc} (see Remark~\ref{rmk:noname}),
and as the density of~$X(k)$ in $X(k_\Omega)^{\Br_\nr(X)}$ is a stable birational
invariant among smooth, rationally connected varieties (see \cite[Proposition~6.1~(iii)]{cps}
and \cite[Remark~2.4~(ii)]{wittenbergslc}),
this change of notation is harmless.

Recall that~$G$, by assumption, is a finite abelian group.
Let us view it as a constant algebraic group over~$k$.
It is easy to see that~$G$ fits into a short exact sequence
\begin{align}
1 \to G \to T \to Q \to 1
\end{align}
of algebraic groups over~$k$, where~$T$ and~$Q$ are tori
and~$Q$ is quasi-trivial, i.e.\ the character group $\Hom(Q_{\bar k},\Gmbark)$ of~$Q$
admits a basis over~$\Z$ that is stable under the action of~$\Gal(\bar k/k)$ (see \cite[Proposition~4.2.1]{serretopics}).
Letting~$G$ act on~$T$ by translation,
we now consider the quotient $W=(\SL_n \times T)/G$
of~$\SL_n \times T$ by the diagonal action,
together with the morphism $\pi:W\to X=\SL_n/G$
induced by the first projection.

The action of~$T$ on~$\SL_n \times T$ by multiplication on the second factor induces an action
of~$T$ on~$W$, with respect to which~$\pi$ is a $T$\nobreakdash-torsor.
According to Theorem~\ref{th:descenttori} applied to~$\pi$,
it will suffice,
in order to complete the proof of Theorem~\ref{th:vosksansuc},
to show that for every $T$\nobreakdash-torsor~$P$ over~$k$,
the variety~$_PW$ satisfies Conjecture~\ref{conj:ct}.

We observe that $_PW=(\SL_n \times P)/G$, that the morphism $p:{}_PW \to P/G$
induced by the second projection is an $\SL_n$\nobreakdash-torsor
(with respect to the action of~$\SL_n$ on~$_PW$ coming from its action on
$\SL_n \times P$ by left multiplication on the first factor), and that
$P/G$ is a $Q$\nobreakdash-torsor over~$k$.
By Hilbert's Theorem~90 (see Example~\ref{ex:hilbert90}),
the generic fibre of~$p$ is isomorphic to~$\SL_n$, in particular it is rational.
It also follows from Hilbert's Theorem~90 (case $n=1$ of
Example~\ref{ex:hilbert90}), combined with Shapiro's lemma, that
any torsor under a quasi-trivial torus over~$k$ is rational; in particlar, the variety~$P/G$
is rational.  These two remarks
imply that~$_PW$  is itself rational over~$k$.   Thus, it satisfies Conjecture~\ref{conj:ct}
for trivial reasons, and Theorem~\ref{th:vosksansuc} is proved.

\subsection{Supersolvable descent}

The ideas sketched in~\textsection\ref{subsec:sketch} are a starting point
for the proof of the following theorem, established in \cite[Corollary~3.3]{hwsupersolvable}.

\begin{thm}[Harpaz, W.]
\label{th:descentsupersolvable}
Conjecture~\ref{conj:descent} holds true if~$G$ is finite and supersolvable.
\end{thm}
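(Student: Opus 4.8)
The plan is to argue by induction on the order of $G$, at each stage peeling off a single cyclic factor of a supersolvable filtration and disposing of it by the torus case already in hand (Theorem~\ref{th:descenttori}), in the same spirit in which the torus embedding trick of~\textsection\ref{subsec:sketch} disposes of a single abelian group. Since $G$ is supersolvable and nontrivial, the bottom term of a supersolvable filtration provides a cyclic subgroup $C$ that is normal in $G$ and such that $\bar G = G/C$ is again supersolvable with $|\bar G| < |G|$. Because $C$ is normal, the $G$-torsor $\pi\colon Y \to X$ factors as a tower
\begin{equation*}
Y \longrightarrow Z := Y/C \longrightarrow X,
\end{equation*}
where $Y \to Z$ is a $C$-torsor and $Z \to X$ is a $\bar G$-torsor (the action of $\bar G$ on $Z$ descending from that of $G$ on $Y$). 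Rational connectedness is inherited by finite quotients in characteristic~$0$, so $Z$ is again rationally connected, and the inductive machine can engage.

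The reduction then has two moves. First I would invoke the induction hypothesis, namely Conjecture~\ref{conj:descent} for the smaller supersolvable group $\bar G$, applied to the $\bar G$-torsor $Z \to X$. This reduces the desired density of $X(k)$ in $X(k_\Omega)^{\Br_\nr(X)}$ to the assertion that every twist $Z'$ of $Z$ by a $\bar G$-torsor over $k$ has $Z'(k)$ dense in $Z'(k_\Omega)^{\Br_\nr(Z')}$, i.e.\ satisfies Conjecture~\ref{conj:ct}. Second, I would establish this density for each such $Z'$ by reproducing the argument of~\textsection\ref{subsec:sketch} one level down. Twisting the tower exhibits $Z'$ as the base of a $C'$-torsor $Y' \to Z'$, where $C'$ is the (possibly non-constant) form of the cyclic group $C$ obtained by twisting through $\bar G \to \Aut(C)$ and $Y'$ is a twist of $Y$. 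I would then choose a $\bar G$-equivariant embedding $C \hookrightarrow T$ into a torus with quasi-trivial cokernel $Q = T/C$, in the style of \cite[Proposition~4.2.1]{serretopics} but carried out compatibly with the $\bar G$-action so that it survives twisting, form the $T'$-torsor $W' = (Y' \times T')/C' \to Z'$ (with $W'$ rationally connected), and apply the torus descent of Theorem~\ref{th:descenttori}. This reduces the density for $Z'$ to Conjecture~\ref{conj:ct} for every twist ${}_P W'$ of $W'$ by a $T'$-torsor $P$ over $k$.

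Each such ${}_P W' = (Y' \times P)/C'$ fibres over the base $P/C'$, which is rational because it is a torsor under the quasi-trivial torus $Q' = T'/C'$, and the generic fibre of this fibration is a twist of $Y'$, hence of $Y$. The hard part will be exactly this last step: deducing Conjecture~\ref{conj:ct} for the total space ${}_P W'$ from the density hypothesis, which is supplied only for twists of $Y$ over~$k$, whereas the relevant twists here appear in a family over the positive-dimensional rational base $P/C'$. Making this rigorous forces one to work with the relative, fibration form of the descent machinery, so that density can be transported from the fibres to the total space over a rational base; this is also where the bookkeeping of twists must be controlled, namely verifying that the equivariant torus embedding still yields a cyclic form $C'$ and a quasi-trivial $Q'$ after twisting, and that $Y'$ genuinely falls under the hypothesis on twists of $Y$. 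Granting this relative input—which is precisely what is packaged into the open descent formalism underlying Theorem~\ref{th:descenttori}—the induction closes and Conjecture~\ref{conj:descent} follows for every finite supersolvable group $G$.
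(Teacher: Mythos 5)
Your inductive architecture (peel off a cyclic normal subgroup~$C$ from the bottom of the supersolvable filtration, factor the torsor as $Y\to Z=Y/C\to X$, apply the inductive hypothesis to $Z\to X$, then dispose of the cyclic piece by a $\bar G$\nobreakdash-equivariant embedding into a torus with quasi-trivial quotient and an appeal to Theorem~\ref{th:descenttori}) is indeed the intended starting point, and you correctly locate where the difficulty sits. But the way you close the argument contains a genuine gap. At the last step you must prove Conjecture~\ref{conj:ct} for the total space ${}_PW'=(Y'\times P)/C'$ of a fibration over the rational base $P/C'$, knowing only that its fibres over $k$\nobreakdash-points (which are twists of~$Y$ by $G$\nobreakdash-torsors over~$k$) satisfy the conjecture. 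You assert that this ``relative input'' is ``packaged into the open descent formalism underlying Theorem~\ref{th:descenttori}''. It is not: Theorem~\ref{th:descenttori} is a \emph{descent} statement, which deduces the conjecture for the \emph{base} of a torus torsor from the conjecture for the twists of its \emph{total space}. What you need here goes in the opposite direction --- deducing the conjecture for the total space of a fibration from the conjecture for its fibres --- and that is the \emph{fibration} problem, which is not a formal consequence of descent and is open for general fibrations over rational bases. Establishing precisely this fibration-type statement, for families over torsors under quasi-trivial tori whose fibres are twists of a fixed rationally connected variety by torsors under a finite group of multiplicative type, is the main technical content of \cite{hwsupersolvable}; it requires new arguments (strong approximation on the ambient torus together with a careful analysis of the arithmetic of the fibres), not merely a ``relative form'' of Theorem~\ref{th:descenttori}. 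Note that in \textsection\ref{subsec:sketch} this issue is invisible only because there the fibres are $\SL_n$, so the total space is outright rational; as soon as the fibres are merely assumed to satisfy Conjecture~\ref{conj:ct}, no result quoted in these notes lets you pass to the total space.

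Two smaller points to tighten if you pursue this route: (a) a $\bar G$\nobreakdash-torsor over~$k$ need not lift to a $G$\nobreakdash-torsor, so the twisted tower $Y'\to Z'$ must be constructed by twisting the pair $(Y\to Z,\,C)$ using the fact that the conjugation action of~$G$ on the abelian group~$C$ factors through~$\bar G$, and one must then check that the fibres of ${}_PW'\to P/C'$ over $k$\nobreakdash-points really are twists of~$Y$ by $G$\nobreakdash-torsors over~$k$ (this follows from the exact sequence $1\to C\to G\to\bar G\to 1$ in nonabelian cohomology, but it deserves a proof); (b) the existence of a $\bar G$\nobreakdash-equivariant embedding $C\hookrightarrow T$ with $T/C$ quasi-trivial, stable under all the relevant twists, also needs an argument beyond the non-equivariant \cite[Proposition~4.2.1]{serretopics}. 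Neither of these is fatal, but the fibration step is: as written, the proposal reduces the theorem to a statement that is strictly stronger than anything available in these notes.
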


Here ``supersolvable'' means that $G(\bar k)$ is supersolvable in the sense
recalled at the beginning of~\textsection\ref{subsec:ss}, except that the
filtration is now required, in addition, to be stable under the action
of~$\Gal(\bar k/k)$ on~$G(\bar k)$, in case this action is not trivial.

Theorem~\ref{th:descentsupersolvable} implies
Theorem~\ref{thm:hwzceh}: in the notation introduced at the beginning of~\textsection\ref{subsec:sketch},
it suffices to apply
Theorem~\ref{th:descentsupersolvable} to the $G$\nobreakdash-torsor $\SL_n \to \SL_n/G$
and to note that any twist of~$\SL_n$ by a $G$\nobreakdash-torsor over~$k$
is an $\SL_n$\nobreakdash-torsor
(through left multiplication),
hence is isomorphic to~$\SL_n$, by Hilbert's Theorem~90,
hence is rational and satisfies Conjecture~\ref{conj:ct}.

\begin{rmk}
Theorem~\ref{th:descentsupersolvable} implies in particular that Conjecture~\ref{conj:descent} holds true
for $G=\Z/2\Z$.  We recall that
 Conjecture~\ref{conj:descent}
assumes that the variety~$Y$ appearing in
its statement is rationally connected.
As was pointed out to the author by M\d{a}nh Linh Nguy\~{\^e}n,
this assumption cannot be dropped,
even when $G=\Z/2\Z$.
Indeed, there exist examples of double covers $Y\to X$, where~$Y$ is a~$K3$ surface and~$X$ is an Enriques surface over $k=\Q$,
such that $X(k)$ and the sets $Y'(k_\Omega)^{\Br_{\nr}(Y')}$ are all empty, while $X(k_\Omega)^{\Br_{\nr}(X)}$
is not
(see \cite[Theorem~1.2]{bbmpv}).
\end{rmk}

\subsection{Prescribed norms}
\label{subsec:prescribednorms}

Our last theorem is an application of supersolvable descent to a variant of the inverse Galois problem of
a slightly different flavour, meant to demonstrate the flexibility of descent as a tool.

\newcommand{\citehss}{\cite[Theorem~4.16]{hwsupersolvable}}
\begin{thm}[\citehss]
\label{th:normsfrom}
Let~$G$ be a finite group.
Let~$k$ be a number field.
Let $\alpha_1,\dots,\alpha_m \in k^*$.
If~$G$ is supersolvable, there exists a Galois field extension~$K$ of~$k$
such that $\Gal(K/k)\simeq G$ and $\alpha_1,\dots,\alpha_m \in N_{K/k}(K)$.
\end{thm}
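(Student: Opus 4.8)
The plan is to reduce the statement to the existence of a single rational point, with irreducible fibre, on an auxiliary variety built to encode both the Galois extension and the norm conditions, and then to produce that point by supersolvable descent (Theorem~\ref{th:descentsupersolvable}). \emph{Setting up the geometry.} Following §\ref{subsec:sketch}, I fix an embedding $G \hookrightarrow \SL_n(k)$ and let $G$ act on $\SL_n$ by right translation, so that $\pi_0 : \SL_n \to X_0 := \SL_n/G$ is the versal $G$\nobreakdash-torsor of Example~\ref{ex:noetherisversalsln}. Let $R = \Res_{k[G]/k}\Gm$ be the quasi-trivial torus of the regular representation, with the $G$\nobreakdash-action induced by translation on $k[G]$ and with its algebra norm $N : R \to \Gm$. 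I form the $k$\nobreakdash-variety
\[
F = \{(z_1,\dots,z_m) \in R^m : N(z_i) = \alpha_i \text{ for } 1 \le i \le m\},
\]
with the diagonal $G$\nobreakdash-action, set $Y = \SL_n \times F$ with the diagonal action, and put $X = Y/G$. Since $G$ already acts freely on the first factor, it acts freely on $Y$, so $\pi : Y \to X$ is a $G$\nobreakdash-torsor. The bundle $X \to X_0$ has fibre over $x_0 \in X_0(k)$ the twist $_P F$, where $P = \pi_0^{-1}(x_0) = \Spec A$ is the $G$\nobreakdash-Galois algebra classified by $x_0$; explicitly $_P F(k)$ consists of tuples $(z_i) \in (\Res_{A/k}\Gm)(k) = (A^\times)^m$ with $N_{A/k}(z_i) = \alpha_i$. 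Thus a point $x \in X(k)$ lying over an $x_0$ with \emph{irreducible} fibre $P = \Spec K$ is exactly the datum of a Galois extension $K/k$ with group $G$ together with $z_i \in K^\times$ realising $\alpha_i = N_{K/k}(z_i)$, which reduces the theorem to producing such an $x$.

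\emph{Applying supersolvable descent.} The variety $Y$ is rationally connected, being over $\bar k$ the product of $\SL_n$ with a torsor under the split torus $(\ker N)^m$, hence rational. I would apply Theorem~\ref{th:descentsupersolvable} to $\pi : Y \to X$, which requires checking that every twist $_P Y$ of $Y$ by a $G$\nobreakdash-torsor $P$ over $k$ satisfies Conjecture~\ref{conj:ct}. Here $_P Y = (P \times \SL_n \times F)/G$ carries a left $\SL_n$\nobreakdash-action, and the induced morphism $_P Y \to {}_P F$ is an $\SL_n$\nobreakdash-torsor; since $\SL_n$ is special and rational, Hilbert's Theorem~90 (Example~\ref{ex:hilbert90}) makes $_P Y$ stably birationally equivalent to $_P F$. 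But $_P F$ is a torsor under the torus $\prod_{i=1}^m \Res^1_{A/k}\Gm$ (the product of norm-one tori of the $G$\nobreakdash-algebra $A$), and torsors under tori satisfy Conjecture~\ref{conj:ct} by a classical theorem of Sansuc. As density of rational points in the Brauer--Manin set is a stable birational invariant among smooth rationally connected varieties, each $_P Y$ satisfies Conjecture~\ref{conj:ct}, and Theorem~\ref{th:descentsupersolvable} yields that $X(k)$ is dense in $X(k_\Omega)^{\Br_\nr(X)}$.

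\emph{From density to a rational point, and Hilbert irreducibility.} It remains to exhibit an adelic point in $X(k_\Omega)^{\Br_\nr(X)}$, for density then forces $X(k) \neq \emptyset$ and, by the standard passage from Conjecture~\ref{conj:ct} to weak approximation off a finite set $S_0$ (see §\ref{subsec:rcvar}), makes $X(k)$ dense in $\prod_{v \notin S_0} X(k_v)$. At every place $v$ the split $G$\nobreakdash-algebra $k_v^{|G|}$ gives a local point of $X_0$, over which $N$ is the product of coordinates and so surjects onto $k_v^\times$; choosing coordinates with product $\alpha_i$ yields $x_v \in X(k_v)$. The point to be made is that these local points can be assembled into one orthogonal to $\Br_\nr(X)$: constant classes pair trivially by the reciprocity law, while the classes relevant to the norm fibration evaluate through cup products with the $\alpha_i$, whose global product formula $\sum_v \inv_v(\alpha_i,-) = 0$ forces the total local pairing to vanish after adjusting finitely many $x_v$. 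Granting this, fix $x \in X(k)$ and apply Ekedahl's Theorem~\ref{th:ekedahl} to the finite étale covering $\pi : Y \to X$ (with $Y$ geometrically irreducible) to obtain a nonempty open subset of $\prod_{v \notin S_0} X(k_v)$ over which the fibre of $\pi$ is irreducible; intersecting it with the dense image of $X(k)$ produces an $x$ whose fibre is irreducible, hence lying over an $x_0$ with $P = \Spec K$ a field, the accompanying $z_i \in K^\times$ satisfying $N_{K/k}(z_i) = \alpha_i$.

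The hard part will be the middle two steps rather than the reduction or the final Hilbert-irreducibility argument: first, organising the $m$ norm conditions into a single $G$\nobreakdash-equivariant variety $F$ whose twists are governed by tori, so that supersolvable descent is even applicable; and second, verifying that the Brauer--Manin set of $X$ is nonempty despite the global constraints imposed by the $\alpha_i$. It is exactly here that the hypotheses bite on both counts—supersolvability of $G$ to invoke Theorem~\ref{th:descentsupersolvable}, and the reciprocity law applied to the \emph{global} elements $\alpha_i$ to defeat the Brauer--Manin obstruction.
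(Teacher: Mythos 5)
Your construction is the same as the paper's: the auxiliary variety $Y=\SL_n\times F$ with $F=\{(z_i)\in(\prod_{g\in G}\Gm)^m:\prod_g t_{i,g}=\alpha_i\}$ is exactly the variety the paper builds (there phrased via the invertible functions $\beta_i$ of norm $\alpha_i$ along $\pi$), the verification that every twist ${}_PY$ satisfies Conjecture~\ref{conj:ct} by reducing, via the $\SL_n$\nobreakdash-torsor ${}_PY\to{}_PF$ and Hilbert's Theorem~90, to Sansuc's theorem for the torsor ${}_PF$ under $(\Res^1_{A/k}\Gm)^m$ is the intended content of ``one then checks'', and the conclusion via Theorem~\ref{th:descentsupersolvable}, weak approximation off a finite set, and Ekedahl's theorem is likewise identical.

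The one place you depart from the paper is your third step, and it is a detour you do not need: you set out to \emph{exhibit} an adelic point of $X$ orthogonal to $\Br_\nr(X)$, flag this as one of the two hard parts, and justify it only by a sketched pairing computation (``constant classes pair trivially\dots the classes relevant to the norm fibration evaluate through cup products with the $\alpha_i$\dots Granting this''). As written this is not a proof --- you would need to know that $\Br_\nr(X)$ modulo constants is generated by such cup-product classes, which you do not establish. But the whole issue evaporates: $Y$ has an obvious $k$\nobreakdash-rational point, namely any $s\in\SL_n(k)$ together with $z_i=(\alpha_i,1,\dots,1)$, so $X(k)\neq\emptyset$ outright and density of $X(k)$ in $X(k_\Omega)^{\Br_\nr(X)}$ combined with weak approximation off $S_0$ needs no further input. (You in fact write down exactly these points locally when constructing $x_v\in X(k_v)$, without noticing that the same formula is global.) This is precisely what the paper does --- ``As $X(k)\neq\emptyset$ (indeed even $Y(k)\neq\emptyset$)\dots''. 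So the genuinely hard part is only the first of the two you name, the verification of Conjecture~\ref{conj:ct} for the twists; the reciprocity law for the $\alpha_i$ plays no r\^ole.
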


The idea of the proof is to construct, in a formal and explicit way,
a $G$\nobreakdash-torsor $\pi:Y\to X$
together with invertible functions $\beta_1,\dots,\beta_m$ on~$Y$ whose norms along~$\pi$ are equal
to the constant invertible functions~$\alpha_1,\dots,\alpha_m$ on~$X$.  Namely, say $m=1$ for simplicity,
embed~$G$ into~$\SL_n(k)$ and consider the subvariety~$Y$ of $\SL_n \times \prod_{g \in G} \Gm$
consisting of all $(s,(t_g)_{g\in G})$ such that $\prod_{g \in G} t_g=\alpha_1$;
the invertible function~$\beta_1$ on~$Y$
given by projection onto the~$\Gm$ factor corresponding to $1 \in G$
 has the required norm.
One then checks that the twists of~$Y$ satisfy Conjecture~\ref{conj:ct} (despite not being rational in general, even
when they have a rational point and~$G$ is assumed abelian), so that Theorem~\ref{th:descentsupersolvable}
implies the validity of Conjecture~\ref{conj:ct}, and hence of weak approximation off a finite set of places,
for~$X$.  As $X(k)\neq\emptyset$ (indeed even $Y(k)\neq\emptyset$),
it follows, by Theorem~\ref{th:ekedahl}, that there exists $x \in X(k)$ such that~$\pi^{-1}(x)$ is irreducible,
i.e.\ gives rise to a Galois field extension~$K/k$ with group~$G$.
Restricting~$\beta_1,\dots,\beta_m$ to~$\pi^{-1}(x)$ yields elements of~$K^*$ with norms~$\alpha_1,\dots,\alpha_m$.

In the case where~$G$ is abelian, Theorem~\ref{th:normsfrom} was first shown to hold
by Frei, Loughran and Newton~\cite{freiloughrannewton}, who established an asymptotic estimate for the number of such~$K$.

\bibliographystyle{amsalpha}
\bibliography{parkcity}
\end{document}